\newcommand{\vx}{\vct{x}}
\newcommand{\vy}{\vct{y}}
\newcommand{\vu}{u}
\newcommand{\vvu}{\vvct{u}}
\newcommand{\valpha}{\boldsymbol\alpha}
\newcommand{\vkappa}{\boldsymbol\kappa}
\newcommand{\im}{\hat{\imath}}
\newcommand{\vct}[1]{\mathbf{#1}}
\newcommand{\vvct}[1]{\underline{#1}}
\newcommand{\ten}[1]{\mathbf{#1}}
\newtheorem{thm}{Theorem}[section]
\newtheorem{lem}[thm]{Lemma}
\title[New Higher-Order Mass-Lumped Tetrahedral Elements]{New Higher-Order Mass-Lumped Tetrahedral Elements for Wave Propagation Modelling*}
\author{S. Geevers$^1$, W.A. Mulder$^{2,3}$ \and J.J.W. van der Vegt$^1$}
\address{1. Department of Applied Mathematics, University of Twente, Enschede, the Netherlands (e-mail: s.geevers@utwente.nl, j.j.w.vandervegt@utwente.nl)}
\address{2. Shell Global Solutions International BV (e-mail: wim.mulder@shell.com)}
\address{3. Delft University of Technology}
\thanks{*This work was funded by the Shell Global Solutions International B.V. under contract no.~PT45999.}
\begin{document}
\maketitle

\begin{abstract}
We present a new accuracy condition for the construction of continuous mass-lumped elements. This condition is less restrictive than the one currently used and enabled us to construct new mass-lumped tetrahedral elements of degrees 2 to 4. The new degree-2 and degree-3 tetrahedral elements require 15 and 32 nodes per element, respectively, while currently, these elements require 23 and 50 nodes, respectively. The new degree-4 elements require 60, 61 or 65 nodes per element. Tetrahedral elements of this degree had not been found yet. We prove that our accuracy condition results in a mass-lumped finite element method that converges with optimal order in the $L^2$-norm and energy-norm. A dispersion analysis and several numerical tests confirm that our elements maintain the optimal order of accuracy and show that the new mass-lumped tetrahedral elements are more efficient than the current ones.
\end{abstract}

\section{Introduction}
Wave propagation modelling has many applications in the fields of structural mechanics, electromagnetism and geosciences. In many of these applications, waves need to be modelled on a large and complex 3D geometry that requires a fast and robust numerical algorithm. 

The oldest and most popular algorithm is the finite difference method, which approximates the wave field on a uniform grid. This method is relatively easy to implement and is very efficient on simple geometries. However, its accuracy quickly deteriorates if the grid points are not aligned with sharp material interfaces and boundaries of the domain. A good alignment is often not possible with uniform grids. 

Unstructured meshes, on the other hand, offer more geometric flexibility and can be properly aligned with many complex geometries. Such meshes can be used with finite element methods. While more difficult to implement and requiring more computations, the finite element method can remain accurate on very complex geometries when using a proper mesh. When applied with mass lumping, the finite element method can in such cases become more efficient than the finite difference method \cite{zhebel14}.

Mass lumping is important for applying the finite element method to wave propagation problems, since it allows for explicit time-stepping. When using an explicit time integration scheme, the finite element method requires the solution of a linear system $Mx=b$, with $M$ the mass matrix, at every time step. When using the classical finite element method, the mass matrix is large and sparse, but not (block)-diagonal. This makes the numerical scheme very inefficient for large-scale simulations. Mass lumping avoids this problem by lumping the mass matrix $M$ into a diagonal matrix. Usually, this is done with nodal basis functions and an inexact quadrature rule for $M$ of which the quadrature points coincide with the basis functions nodes.

For quadrilaterals and hexahedra, mass lumping is relatively straightforward and is accomplished by using tensor product basis functions and Gauss--Lobatto quadrature points. The resulting method is known as the spectral element method. Quadrilaterals and hexahedra, however, offer less geometric flexibility than triangles and tetrahedra. 

For linear triangular and tetrahedral elements, mass lumping is done using standard Lagrangian basis functions and a Newton--Cotes integration rule. For higher-degree triangular and tetrahedral elements, however, this approach results in instabilities, a singular mass matrix, or a suboptimal convergence rate. The Newton--Cotes rule for quadratic triangular elements, for example, has zero weights at the vertices, resulting in a singular mass matrix. This can be resolved by enriching the quadratic element space with a cubic bubble function that vanishes on all edges and by adding an additional node at the centre of the triangle \cite{fried75}. By enriching the element space with higher-degree bubble functions and combining it with a suitable quadrature rule, mass-lumped triangular elements were also obtained for degrees 3 \cite{cohen95, cohen01}, 4 \cite{mulder96}, 5 \cite{chin99}, 6 \cite{mulder13}, and 7 to 9 \cite{liu17, cui17}. For tetrahedra, mass lumping can be accomplished in a similar way by adding higher-degree face and internal bubble functions to the element space. So far, this has resulted in mass-lumped tetrahedral elements of degrees 2 \cite{mulder96} and 3 \cite{chin99}.

In this paper we show that the accuracy condition that was imposed on the quadrature rules of these higher-degree triangular and tetrahedral mass-lumped elements is too strong. This condition is that the quadrature rule of a degree-$p$ element should be exact for polynomials up to degree $p+p'-2$ \cite{ciarlet78}, where $p'>p$ is the highest polynomial degree of the functions in the enriched element space. Instead, we show that for $p\geq 2$ the quadrature rule only needs to be exact for functions in $\tilde U\otimes\mathcal{P}_{p-2}$, with $\tilde U$ the enriched element space and $\mathcal{P}_{p-2}$ the set of polynomials up to degree $p-2$. We prove that by satisfying this condition, the finite element method can maintain an optimal order of convergence in the $L^2$-norm and energy-norm.

This new accuracy condition enabled us to develop several new mass-lumped tetrahedral elements of degrees 2 to 4. The new elements of degree 2 and 3 require 15 and 32 nodes per element, respectively, while the current versions require 23 and 50 nodes, respectively. Our degree-4 elements require 60, 61 or 65 nodes. Mass-lumped tetrahedral elements of this degree had not been found yet. A dispersion analysis and various numerical tests confirm the optimal order of convergence of these methods and show that the new mass-lumped tetrahedral elements are significantly more efficient than the current ones.

Although this paper focuses on wave propagation problems, more generally, mass lumping is useful for solving any type of evolution problem that requires explicit time-stepping. It is also useful for efficiently computing higher-order derivatives, which appear, for example, in the Korteweg--de Vries equation \cite{minjeaud18}.

This paper is constructed as follows: In Section \ref{sec:FEM}, we present the scalar wave equation and the classical finite element method. In Section \ref{sec:ML}, we explain mass lumping. The stability is analyzed in Section \ref{sec:stability}. In Section \ref{sec:accuracy}, we present our new accuracy condition for the quadrature rule for the mass matrix and prove that, if this condition is satisfied, the mass-lumped finite element method can maintain an optimal order of convergence. This condition enabled us to derive several new mass-lumped tetrahedral elements of degrees 2 to 4, presented in Section \ref{sec:MLelements}. We analyze the dispersion properties of these new methods in Section \ref{sec:dispersion} and test the methods numerically in Section \ref{sec:tests}. In both sections we compare the new methods with existing finite element methods. Finally, we present our main conclusions in Section \ref{sec:conclusion}.

\section{The Scalar Wave Equation and Classical Finite Element Method}
\label{sec:FEM}
In this paper, we mainly focus on the scalar wave equation, which serves as a model problem for more complex wave problems such as the elastic wave equations and Maxwell's equations. Let $\Omega\subset\mathbb{R}^3$ be a three-dimensional open bounded domain, with Lipschitz boundary $\partial \Omega$, and let $(0,T)$ be the time domain. The scalar wave equation can be written as
\begin{subequations}
\begin{align}
\rho\partial_t^2 u &= \nabla\cdot c\nabla u + f &&\text{in }\Omega\times (0,T), \\
u &=0 &&\text{on }\partial\Omega, \\
u|_{t=0} &= u_0 && \text{in }\Omega, \\
\partial_tu|_{t=0} &= v_0 && \text{in }\Omega,
\end{align}
\label{eq:hypP}%
\end{subequations}
where $u:\Omega\times(0,T)\rightarrow\mathbb{R}$ is the unknown scalar field, $\nabla$ is the gradient operator, $\rho,c:\Omega\rightarrow\mathbb{R}^+$ are positive scalar fields, and $f:\Omega\times(0,T)\rightarrow\mathbb{R}$ is the source term. We assume that the parameters $\rho$ and $c$ are bounded by $\rho_0\leq \rho \leq\rho_1$ and $c_0\leq c\leq c_1$ for some positive scalars $\rho_0,\rho_1,c_0,c_1\in\mathbb{R}^+$.

This equation can be solved with the finite element method, which is based on the weak formulation of (\ref{eq:hypP}). Assume the initial conditions satisfy $u_0\in H^1_0(\Omega)$ and $v_0\in L^2(\Omega)$ and assume the source term satisfies $f\in L^2\big(0,T;L^2(\Omega)\big)$. Here, $L^2(\Omega)$ denotes the space of square integrable functions on $\Omega$, $H^1_0$ denotes the Sobolev space of functions on $\Omega$ that are zero on $\partial \Omega$ and have square-integrable weak derivatives, and $L^2(0,T;U)$, with $U$ a Banach space, denotes the Bochner space consisting of functions $f:(0,T)\rightarrow U$ such that $\|f(t)\|_U$ is square integrable in $(0,T)$. The weak formulation of (\ref{eq:hypP}) is finding $u\in L^2\big(0,T;H^1_0(\Omega)\big)$, with $\partial_tu\in L^2\big(0,T;L^2(\Omega)\big)$ and $\partial_t(\rho\partial_tu)\in L^2\big(0,T;H^{-1}(\Omega)\big)$, such that $u|_{t=0}=u_0$, $\partial_tu|_{t=0}=v_0$, and
\begin{align}
\langle\partial_t(\rho\partial_t u), w\rangle + a(u,w) &= (f,w)  &&\text{for all }w\in H^1_0(\Omega), \text{ a.e. }t\in(0,T).
\label{eq:hypPWF}%
\end{align}
Here, $\langle \cdot,\cdot \rangle$ denotes the pairing between $H^{-1}(\Omega)$ and $H_0^1(\Omega)$, $(\cdot,\cdot)$ denotes the $L^2(\Omega)$ inner product, and $a(\cdot,\cdot):H^1_0(\Omega)\times H^1_0(\Omega)\rightarrow\mathbb{R}$ is the elliptic operator given by
\begin{align*}
a(u,w):=\int_{\Omega} c\nabla u\cdot\nabla w \;dx.
\end{align*}
Because of the boundedness of $\rho$, it follows that the norm $\|u\|_{\rho}^2:=(\rho u,u)$ is equivalent to the standard $L^2(\Omega)$-norm. It can then be proven, in a way analogous to \cite[Chapter 3, Theorem 8.1]{lions72}, that (\ref{eq:hypPWF}) is well-posed and has a unique solution.


The solution of (\ref{eq:hypPWF}) can be approximated by the finite element method. Let $\mathcal{T}_h$ be a tetrahedral tesselation of $\Omega$, with $h$ the diameter of the smallest sphere that can contain each element in $\mathcal{T}_h$, and let $U_h$  denote the finite element space consisting of continuous functions that are polynomial of degree at most $p$ when restricted to a single element:
\begin{align*}
U_{h} =\{u\in H^1_0(\Omega) \;|\; u|_{e}\in\mathcal{P}_p(e) \text{ for all }e\in\mathcal{T}_h\},
\end{align*}
where $\mathcal{P}_p$ denotes the set of all polynomials of degree $p$ or less. The classical conforming finite element method is finding $u_h:[0,T]\rightarrow U_{h}$, such that $u_h|_{t=0}=\Pi_{h}u_0$, $\partial_tu_h|_{t=0}=\Pi_{h}v_0$, and 
\begin{align}
(\rho\partial_t^2 u_h,w) + a(u_h,w) &= (f,w) && \text{for all }w\in U_{h}, \text{ a.e. }t\in(0,T),
\label{eq:CFEM}
\end{align}
where $\Pi_{h}:L^2(\Omega)\rightarrow U_{h}$ is the weighted $L^2$ projection operator defined such that $(\rho\Pi_{h}u,w)=(\rho u,w)$ for all $w\in U_{h}$. 

This can be rewritten as a set of ODE's using a linear basis $\{w_i\}_{i=1}^n$ of $U_{h}$. For any function $u\in U_{h}$ we define $\vvct{u}\in\mathbb{R}^n$ as the vector of coefficients such that $u = \sum_{i=1}^n \underline{u}_i w_i$. The finite element method can then be formulated as solving $\vvu_h:[0,T]\rightarrow\mathbb{R}^n$, such that $\vvu_h|_{t=0}=\vvct{\Pi_{h}u_0}$, $\partial_t\vvu_h|_{t=0}=\vvct{\Pi_{h}v_0}$, and
\begin{align}
M\partial_t^2\vvu_h + A\vvu_h = \vvct{f}^* && \text{for a.e. }t\in(0,T),
\label{eq:ODE}
\end{align}
where $M,A\in\mathbb{R}^{n\times n}$ are the mass matrix and stiffness matrix, respectively, given by $M_{ij} := (\rho w_i,w_j)$, $A_{ij} := a(w_i,w_j)$ for all $i,j=1,\dots,n$, and $\vvct{f}^*\in L^2(0,T;\mathbb{R}^n)$ is the source vector, given by $\vvct{f}_i^*:=(f,w_i)$, for $i=1,\dots,n$, a.e. $t\in(0,T)$.

When using an explicit time integration scheme, a system of the form $M\vvct{x}=\vvct{b}$ needs to be solved at every time step. Typically, the mass matrix $M$ is large and sparse, but not (block)-diagonal, resulting in a very inefficient numerical scheme. A diagonal mass matrix can be obtained by a technique known as mass lumping. We will discuss this in the next section.

\section{Mass lumping}
\label{sec:ML}
Mass lumping is usually done with nodal basis functions and an inexact quadrature rule for the mass matrix. A diagonal matrix is obtained when the integration points coincide with the nodes of the basis functions. However, when using elements of degree $p\geq 2$, this technique does not result in a stable and accurate finite element scheme. For example, for standard quadratic Lagrangian basis functions combined with a Newton--Cotes quadrature rule, the weights at the vertices of the quadratic tetrahedral element become negative, resulting in unstable modes. 

To overcome such problems, the elements are enriched with higher-degree face and interior bubble functions. These enriched elements are still affine-equivalent to a reference element $\tilde e$. We can therefore write the discrete space in the form 
\begin{align*}
U_h=H_0^1(\Omega) \cap U(\mathcal{T}_h, \tilde U),
\end{align*}
where
\begin{align*}
U(\mathcal{T}_h,\tilde U) &:= \{u\in H^1(\Omega) \;|\; u\circ\phi_e \in \tilde U, \text{for all }e\in\mathcal{T}_h \},
\end{align*}
with $\phi_e:\tilde e\rightarrow e$ the reference-to-physical element mapping, and $\tilde U$ the reference space.  If $\tilde U=\mathcal{P}_p(\tilde e)$ we obtain the standard elements of degree $p$. To obtain enriched elements, we set $\tilde U=\mathcal{P}_p(\tilde e)\oplus \tilde{U}^+:=\{u \;|\; u=w+u^+ \text{ for some }w\in\mathcal{P}_p(\tilde e), u^+\in \tilde{U}^+\} $, with $\tilde{U}^+$ a space of higher-degree face and interior bubble functions.



A nodal basis and quadrature rule for $U_h$ can be constructed from a nodal basis and quadrature rule for the reference space $\tilde U$. In the next two subsections we will discuss this in more detail.

\subsection{Nodes and Nodal Basis Functions}
\label{sec:nodalBF}
A nodal basis for a space $U_h$ consists of a set of nodes $\mathcal{Q}_h$ and corresponding basis functions $\{w_{\vx}\}_{\vx\in\mathcal{Q}_h}$, such that $\mathrm{span}\{w_{\vx}\}_{\vx\in\mathcal{Q}_h} = U_h$ and $w_{\vx}(\vy) = \delta_{\vx\vy}$, for all $\vx,\vy\in\mathcal{Q}_h$, where $\delta_{\vx\vy}$ denotes the Kronecker delta. This means that each basis function equals one at one particular node and zero at all the other nodes.

A common way to construct such a nodal basis for the space $U(\mathcal{T}_h,\tilde U)$ is using a nodal basis $\{\tilde w_{\tilde{\vx}}\}_{\tilde{\vx}\in\tilde{\mathcal{Q}}}$ for the reference space $\tilde U$. The element nodes $\mathcal{Q}_e$ are obtained by mapping the reference nodes to the physical element: $\mathcal{Q}_e:=\{\phi_e(\tilde\vx)\}_{\tilde\vx\in\tilde{\mathcal{Q}}}$. The nodal basis functions of this element, $\{w_{e,\vx}\}_{\vx\in\mathcal{Q}_e}$, are obtained by mapping the reference basis functions to the physical element. We can write these functions as $w_{e,\vx}:=\tilde w_{\phi_e^{-1}(\vx)}\circ\phi_e^{-1}$. The set of global nodes $\mathcal{Q}_h$ is the union of all element nodes and the corresponding global basis functions are obtained by concatenating the corresponding element basis functions. Formally, we define the global nodal basis functions $\{w_{\vx}\}_{\vx\in\mathcal{Q}_h}$ as follows:
\begin{align}
w_{\vx}|_{e} &:= \begin{cases}
\tilde w_{\phi_e^{-1}(\vx)}\circ\phi_e^{-1}, &  e\in \mathcal{T}_\vx, \\
0, &\text{otherwise},
\end{cases}
\label{eq:nodalBFD}
\end{align}
for all $\vx\in\mathcal{Q}_h$, where $\mathcal{T}_{\vx}$ denotes the set of elements containing or adjacent to $\vx$. To ensure that these global basis functions are well-defined and continuous, we need to impose the following additional conditions on $\tilde{\mathcal{Q}}$ and $\{\tilde w_{\tilde{\vx}}\}_{\tilde{\vx}\in\tilde{\mathcal{Q}}}$:
\begin{align}
\tilde w_{\tilde\vx}|_{\tilde f} &= 0 &&\text{for all }\tilde f\in\tilde{\mathcal{F}}, \tilde\vx\in\tilde{\mathcal{Q}} \setminus \tilde f, 
\label{eq:confC1}
\end{align}
and
\begin{subequations}
\begin{align}
\tilde{\mathcal{Q}} &= s(\tilde{\mathcal{Q}}) &&\text{for all }s\in\mathcal{S},  \label{eq:confC2a}\\
\tilde w_{\tilde\vx} &= \tilde w_{s(\tilde\vx)} \circ s &&\text{for all }s\in\mathcal{S}, \label{eq:confC2b}
\end{align}
\label{eq:confC2}%
\end{subequations}
where $\tilde{\mathcal{F}}$ is the set of reference faces and $\mathcal{S}$ is the set of all affine mappings that map $\tilde e$ onto itself. Condition (\ref{eq:confC1}) implies that if a basis function is zero at the nodes on a face, then it should be zero on the entire face, and condition (\ref{eq:confC2}) implies that the set of element nodes and basis functions are symmetric and do not depend on the choice of $\phi_e$. A proof that $\{w_{\vx}\}_{\vx\in\mathcal{Q}_h}$ is indeed a set of well-defined and continuous nodal basis functions is given in Lemma \ref{lem:nodalBF} and Theorem \ref{thm:nodalBF}. 


It remains to incorporate the Dirichlet boundary condition $u_h|_{\partial\Omega}=0$. If $u_h\in U(\mathcal{T}_h,\tilde U)=\mathrm{span}\{w_{\vx}\}_{\vx\in\mathcal{Q}_h}$, then, because of (\ref{eq:confC1}), this condition is satisfied when $u_h=0$ at all nodes on $\partial\Omega$. A nodal basis for $U_h$ therefore consists of all interior nodes $\mathcal{Q}_h\setminus\partial\Omega$ and corresponding basis functions $\{w_{\vx}\}_{\vx\in\mathcal{Q}_h\setminus\partial\Omega}$.

\subsection{Quadrature Rule}
\label{sec:quadRule}
To obtain a diagonal mass matrix, we approximate the integrals with an inexact quadrature rule of which the integration points coincide with the nodes of the nodal basis.

Let $\mathcal{Q}_e:=\{\phi_e(\tilde\vx)\}_{\tilde\vx\in\tilde{\mathcal{Q}}}$ be the set of nodes on $e$, and let $\{\omega_{e,\vx}\}_{\vx\in\mathcal{Q}_e}$ be a set of corresponding weights. Together, the weights and nodes form a quadrature rule for the element. The quadrature rule is used to approximate the integrals of the mass matrix at the element as follows:
\begin{align}
(\rho u,w)_e = \int_e \rho uw\;dx \approx \sum_{\vx\in\mathcal{Q}_e} \omega_{e,\vx} \rho_e(\vx)u(\vx)w(\vx)=:(\rho u,w)_{\mathcal{Q}_e},
\label{eq:quadD}
\end{align}
where $\rho_e:=\rho|_e$ denotes the scalar field $\rho$ restricted to element $e$. We assume that $\rho$ is continuous within each element, which implies that the approximation above is well defined. The global product $(\rho u,w)$ is then approximated by
\begin{align}
\label{eq:quadP}
(\rho u,w) \approx (\rho u,w)_{\mathcal{Q}_h} :=  \sum_{e\in\mathcal{T}_h} (\rho u,w)_{\mathcal{Q}_e}.
\end{align}

Now let $w_{\vx},w_{\vy}$, with $\vx,\vy\in\mathcal{Q}_h$, be nodal basis functions as described in the previous subsection. The corresponding mass matrix entry is given by
\begin{align}
(\rho w_{\vx},w_{\vy})_{\mathcal{Q}_h} =\delta_{\vx\vy} \sum_{ e\in\mathcal{T}_\vx} \omega_{e,\vx}\rho_e({\vx}),
\label{eq:diagM2}
\end{align}
This implies that the mass matrix is diagonal with entries of the form $\sum_{ e\in\mathcal{T}_\vx } \omega_{e,\vx}\rho_e({\vx})$.

The quadrature rules can be constructed from a reference quadrature rule. This rule consists of the reference nodes $\tilde{\mathcal Q}$ and a set of weights $\{\tilde\omega_{\tilde\vx}\}_{\tilde\vx\in\tilde{\mathcal Q}}$ and approximates integrals on the reference element as follows:
\begin{align*}
\int_{\tilde e} \tilde\rho \tilde u\tilde w\;d\tilde x \approx \sum_{\tilde\vx\in\tilde{\mathcal Q}} \omega_{\tilde\vx}\tilde\rho(\tilde\vx)\tilde u(\tilde\vx)\tilde v(\tilde\vx) =: (\tilde\rho\tilde u,\tilde w)_{\tilde{\mathcal{Q}}}.
\end{align*}
We can use this to approximate the integral of the physical element by
\begin{align*}
(\rho u,w)_e = \frac{|e|}{|\tilde e|} \int_{\tilde e} \tilde\rho \tilde u\tilde w\;d\tilde x \approx   \frac{|e|}{|\tilde e|}  (\tilde\rho\tilde u,\tilde w)_{\tilde{\mathcal{Q}}},
\end{align*}
with $|e|$ the volume of $e$, $|\tilde e|$ the volume of $\tilde e$, and $\tilde \rho:=\rho\circ\phi_e$, $\tilde u:=u\circ\phi_e$, $\tilde w:=w\circ\phi_e$. This approximation is the same as (\ref{eq:quadD}) when $\omega_{e,\vx}=({|e|}/{|\tilde e|})\tilde\omega_{\phi_{e}^{-1}(\vx)}$.

Now that we have introduced the quadrature rules for the mass matrix, we can present the mass-lumped finite element method.

\subsection{Mass-Lumped Finite Element Method}
Assume $\rho\in\mathcal{C}^0(\mathcal{T}_h)$, $u_0\in H^1_0(\Omega)\cap\mathcal{C}^0_0(\Omega)$, $v_0\in \mathcal{C}^0_0(\Omega)$, and $f\in L^2\big(0,T;\mathcal{C}^0(\overline\Omega)\big)$. Here, $\mathcal{C}^0(\mathcal{T}_h)$ denotes the set of functions that are in $\mathcal{C}^0(\overline e)$ when restricted to $e$. The mass-lumped finite element method is finding $u_h:[0,T]\rightarrow U_h$, such that $u_h|_{t=0}=I_hu_0$, $\partial_tu_h|_{t=0}=I_hv_0$, and 
\begin{align}
(\rho\partial_t^2 u_h,w)_{\mathcal{Q}_h} + a(u_h,w) &= (f,w)_{\mathcal{Q}_h} && \text{for all }w\in U_h, \text{ a.e. }t\in(0,T),
\label{eq:MLFEM}
\end{align}
where $I_h:\mathcal{C}^0(\overline\Omega)\rightarrow U(\mathcal{T}_h,\tilde U)$ denotes the interpolation of a continuous function by a function in $U(\mathcal{T}_h,\tilde U)$ through the nodes of $\mathcal{Q}_h$.

To write this as a set of ODE's, let $\{\vx^{(i)}\}_{i=1}^n=\mathcal{Q}_h\setminus\partial\Omega$ be a numbering of all interior nodes, and define $w_i:=w_{\vx^{(i)}}$ for all $i=1,2,\dots,n$. Then the mass-lumped finite element method can be formulated as solving $\vvu_h:[0,T]\rightarrow\mathbb{R}^n$ such that $\vvu_h|_{t=0}=\vvct{I_hu_0}$, $\partial_t\vvu_h|_{t=0}=\vvct{I_hv_0}$, and 
\begin{align}
M\partial_t^2\vvu_h + A\vvu_h = \vvct{f}^* && \text{for a.e. }t\in(0,T),
\label{eq:ODEML}
\end{align}
where $M_{ij} := (\rho w_i,w_j)_{\mathcal{Q}_h}$, $A_{ij} := a(w_i,w_j)$ for all $i,j=1,\dots,n$, and $\vvct{f}_i^*:=(f,w_i)_{\mathcal{Q}_h}$, for $i=1,\dots,n$, a.e. $t\in(0,T)$. From (\ref{eq:diagM2}) it follows that $M$ is now a diagonal matrix that can be written as
\begin{align}
M_{ij} &= \delta_{ij}\sum_{e\in\mathcal{T}_{\vx^{(i)}}} \omega_{e,\vx^{(i)}}\rho_e({\vx^{(i)}}), &&i,j=1,\dots,n.
\label{eq:diagM}
\end{align}
This set of ODE's can be efficiently solved using an explicit time integration scheme such as the second-order leap-frog scheme or a higher-order Dablain scheme \cite{dablain86}, which is a type of Lax--Wendroff scheme \cite{lax64} for second-order wave equations.

In the next sections we analyze the stability and accuracy of the mass-lumped finite element method and derive conditions for the quadrature rules.

\subsection{Stability of the Mass-Lumped Finite Element Method}
\label{sec:stability}
To analyze the stability of the mass-lumped finite element method, we look at the behavior of the discrete energy. Consider the mass-lumped method given in (\ref{eq:MLFEM}) and substitute $w=\partial_t u$ to obtain
\begin{align*}
\partial_t E_h &=(f,\partial_t u)_{\mathcal{Q}_h} &&\text{for a.e. }t\in(0,T),
\end{align*}
where $E_h:=\frac12(\rho \partial_tu,\partial_tu)_{\mathcal{Q}_h}+\frac12a(u,u)$ is the discrete energy. This implies that the discrete energy remains bounded when the source term $f$ is bounded and that the discrete energy is conserved when there is no source term. 

For stability it then remains to show that the discrete energy is a well-defined energy. This means that $(\rho v,v)_{\mathcal{Q}_h}+ a(u,u)>0$ for all $u,v\in U_h$, $(u,v)\neq 0$, which is the case when $(\rho u,u)_{\mathcal{Q}_h}>0$ for any $u\in U_h$, $u\neq 0$. Since we can write $(\rho u,u)_{\mathcal{Q}_h}=\vvu^tM\vvu$, this is satisfied when $M$ is positive definite. From (\ref{eq:diagM}) it follows that this is the case when all weights of the quadrature rules are strictly positive, which is the case when the weights of the reference quadrature rule are strictly positive.

\section{Accuracy of the Mass-Lumped Finite Element Method}
\label{sec:accuracy}
\subsection{A Less Restrictive Condition on the Accuracy of the Quadrature Rule}
\label{sec:mainResult1}
Let $U_h=U(\mathcal{T}_h,\tilde U)$, with $\tilde U=\mathcal{P}_p(\tilde e)\oplus\tilde{U}^+$, be the finite element space constructed as in Section \ref{sec:ML}, where $p\geq 2$ denotes the degree of the finite element method and $\tilde{U}^+\subset\mathcal{P}_{p'}(\tilde e)$ is the space of higher-degree face and interior bubble functions.  Also, let the quadrature rule for the mass matrix be based on a reference element quadrature rule as described in Section \ref{sec:quadRule}. We will prove that an optimal convergence rate of the mass-lumped finite element method is obtained when all weights of the reference quadrature rule, $\{\tilde\omega_{\tilde\vx}\}_{\tilde\vx\in\tilde{\mathcal Q}}$, are strictly positive and
\begin{align}
\label{eq:quadC1}
\int_{\tilde e} \tilde f\;d\tilde x &=  \sum_{\tilde{\vx}\in\tilde{\mathcal{Q}}} \tilde{\omega}_{\tilde{\vx}}\tilde f(\vx) &&\text{for all }\tilde f\in\mathcal{P}_{p-2}(\tilde e)\otimes\tilde U,
\end{align}
where $\mathcal{P}_{p-2}(\tilde e)\otimes\tilde U:=\{f \;|\; f=wu \text{ for some }w\in\mathcal{P}_{p-2}(\tilde e), u\in\tilde U\}$. This means that the quadrature rule of the reference element should be exact for products of the reference basis functions and polynomials of degree $p-2$. Until now, the condition used for the accuracy of the quadrature rule was
\begin{align}
\label{eq:quadCOld}
\int_{\tilde e} \tilde f\;d\tilde x&=  \sum_{\tilde{\vx}\in\tilde{\mathcal{Q}}} \tilde{\omega}_{\tilde{\vx}}\tilde f(\vx), &&\text{for all }\tilde f\in\mathcal{P}_{p+p'-2}(\tilde e),
\end{align}
see for example \cite{cohen95, mulder96, chin99}, so it was imposed that the reference quadrature rule should be exact for functions in $\mathcal{P}_{p+p'-2}(\tilde e)$, with $p'$ the highest polynomial degree of the enriched space, which turns out to be significantly more restrictive for tetrahedral elements. By using (\ref{eq:quadC1}) instead of (\ref{eq:quadCOld}) we are able to develop new mass-lumped elements that require significantly less nodes.

In the next subsections we will prove that the convergence rate of the mass-lumped finite element method remains optimal under the less severe condition (\ref{eq:quadC1}). The novel part of the proofs are the bounds on the integration error, derived in Section \ref{sec:mainResult2}. This is the only part where we explicitly use condition (\ref{eq:quadC1}). Using these bounds we can prove optimal convergence in a rather standard way.

\subsection{Some Norms and Interpolation Properties}
For the convergence analysis, we use multiple interpolation properties, which we will present in this subsection. Also, to make the analysis more readable, we will use $C$ to denote some positive constant that may depend on the regularity of the mesh, the reference space $\tilde U$, the reference quadrature rule, the domain $\Omega$, and the parameters $\rho,c$, but does not depend on the mesh resolution $h$, the time interval $(0,T)$, or the choice of the functions that appear in the inequality.

Let $H^k(\Omega)$, with $k\geq 1$, denote the Sobolev space, consisting of functions with square integrable order-$k$ weak derivatives equipped with norm
\begin{align*}
\| u\|_k^2 &:= \sum_{|\valpha|\leq k} \|D^{\valpha} u\|_0^2, &&k\geq 1,
\end{align*}
where $\|\cdot\|_0$ denotes the standard $L^2(\Omega)$-norm, and $D^{\valpha}:=\partial_1^{\alpha_1}\partial_2^{\alpha_2}\partial_3^{\alpha_3}$ denotes a higher-order partial derivative of order $|\valpha|:=\alpha_1+\alpha_2+\alpha_3$. Also let $H^k(\mathcal{T}_h)$, with $k\geq 1$, denote the broken Sobolev space, consisting of functions that belong to $H^k(e)$ when restricted to element $e$, for all $e\in\mathcal{T}_h$. We equip this space with the norm
\begin{align*}
\| u\|_{\mathcal{T}_h,k}^2 &:=  \sum_{e\in\mathcal{T}_h} \|u\|_{e,k}^2 := \sum_{e\in\mathcal{T}_h}\left(\sum_{|\valpha|\leq k} \|D^{\valpha}\vu\|_e^2\right), &&k\geq 1.
\end{align*}
Now let $I_h:\mathcal{C}^0(\overline\Omega)\rightarrow U(\mathcal{T}_h,\tilde U)$ denote the interpolation by a function in $U(\mathcal{T}_h,\tilde U)$ through the nodes of $\mathcal{Q}_h$. This interpolation operator is well-defined for functions in $H^2(\mathcal{T}_h)\cap H^1(\Omega)$, since $H^2(e) \subset \mathcal{C}^0(\overline e)$ when $e$ is a three-dimensional element, and therefore $H^2(\mathcal{T}_h)\cap H^1(\Omega)\subset\mathcal{C}^0(\overline\Omega)$. For this interpolation operator, we can present the following approximation properties:
\begin{lem}
\label{lem:interP1}
Let $p\geq 2$ be the degree of the finite element space and let $u\in  H^1(\Omega)\cap H^k(\mathcal{T}_h)$ with $k\geq 2$. Then
\begin{align*}
\|u-I_hu\|_{\mathcal{T}_h,l} &\leq Ch^{\mathrm{min}(p+1,k)-l}\|u\|_{\mathcal{T}_h,\mathrm{min}(p+1,k)}, && l\leq\mathrm{min}(p+1,k).
\end{align*}
\end{lem}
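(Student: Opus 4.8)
The plan is to reduce the global broken-norm estimate to a local estimate on each element via the affine maps $\phi_e$, establish that local estimate on the fixed reference element $\tilde e$ using the Bramble--Hilbert lemma, and then transfer it back by an affine scaling argument and sum over the mesh. Write $m:=\min(p+1,k)$. Since $p\geq 2$ and $k\geq 2$ we have $m\geq 2$, and because $e$ is three-dimensional the Sobolev embedding $H^m(\tilde e)\hookrightarrow H^2(\tilde e)\hookrightarrow\mathcal{C}^0(\overline{\tilde e})$ guarantees that the nodal values at the points of $\tilde{\mathcal{Q}}$ are well-defined; this is exactly why $k\geq 2$ is required. Note that the broken norm only sees element-wise restrictions, so global continuity of $I_hu$ is not needed here: on each $e$ we may use the local identity $I_hu\circ\phi_e=\tilde I(u\circ\phi_e)$, where $\tilde I$ is the reference interpolation operator introduced below.

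First I would study the reference interpolation operator $\tilde I:\mathcal{C}^0(\overline{\tilde e})\to\tilde U$, defined by $\tilde I\tilde u:=\sum_{\tilde\vx\in\tilde{\mathcal Q}}\tilde u(\tilde\vx)\tilde w_{\tilde\vx}$. Two properties are needed. First, $\tilde I$ is bounded from $H^m(\tilde e)$ into $H^l(\tilde e)$: since the nodal basis functions $\tilde w_{\tilde\vx}$ are fixed and smooth, $\|\tilde I\tilde u\|_{H^l(\tilde e)}\leq C\max_{\tilde\vx}|\tilde u(\tilde\vx)|\leq C\|\tilde u\|_{\mathcal{C}^0(\overline{\tilde e})}\leq C\|\tilde u\|_{H^m(\tilde e)}$, using the embedding above. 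Second, $\tilde I$ reproduces polynomials up to degree $p$: because $\mathcal{P}_p(\tilde e)\subseteq\tilde U$ and every element of $\tilde U$ is recovered from its nodal values, we have $\tilde Iv=v$ for all $v\in\mathcal{P}_p(\tilde e)$, and in particular for all $v\in\mathcal{P}_{m-1}(\tilde e)$ since $m-1\leq p$. It is this second property—surviving the enrichment by $\tilde U^+$—that delivers the full order $h^{m-l}$ rather than a reduced one.

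With these two facts, $\mathrm{id}-\tilde I$ is a bounded linear map $H^m(\tilde e)\to H^l(\tilde e)$ that annihilates $\mathcal{P}_{m-1}(\tilde e)$, so the Bramble--Hilbert lemma (Deny--Lions) gives $\|\tilde u-\tilde I\tilde u\|_{H^l(\tilde e)}\leq C|\tilde u|_{H^m(\tilde e)}$ for $l\leq m$. I would then transfer this to a physical element $e$ by the affine change of variables $\phi_e$, with Jacobian $B_e$: the standard scaling bounds for Sobolev seminorms under affine maps, together with the shape-regularity of $\mathcal{T}_h$ (so that $\|B_e\|\leq Ch_e$ and $\|B_e^{-1}\|\leq Ch_e^{-1}$, with $h_e$ the element diameter and $h_e\leq h$), yield the local estimate $\|u-I_hu\|_{e,l}\leq Ch_e^{m-l}\|u\|_{e,m}$, valid because $m-l\geq 0$. (Here the Jacobian determinants arising on the two sides cancel up to a bounded factor controlled by regularity.)

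Finally I would square, sum over all $e\in\mathcal{T}_h$, bound each $h_e$ by $h$, and take the square root:
\begin{align*}
\|u-I_hu\|_{\mathcal{T}_h,l}^2=\sum_{e\in\mathcal{T}_h}\|u-I_hu\|_{e,l}^2\leq Ch^{2(m-l)}\sum_{e\in\mathcal{T}_h}\|u\|_{e,m}^2=Ch^{2(m-l)}\|u\|_{\mathcal{T}_h,m}^2,
\end{align*}
which is the claimed bound with $m=\min(p+1,k)$. I do not expect a genuine obstacle here, since this is the classical affine interpolation estimate; the only care required is in the two reference-element properties, namely that point evaluation is legitimate (hence $m\geq 2$ in three dimensions) and that the enriched space still reproduces $\mathcal{P}_p$.
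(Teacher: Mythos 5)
Your argument is correct, and it is essentially the standard affine-equivalence interpolation proof (Bramble--Hilbert on the reference element, polynomial reproduction of $\mathcal{P}_{m-1}\subseteq\mathcal{P}_p\subseteq\tilde U$, and scaling) that underlies the result the paper simply cites as \cite[Theorem 3.1.6]{ciarlet78}. In other words, you have written out in full the classical proof that the paper invokes by reference, so there is no substantive difference in approach.
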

\begin{proof}
This result follows from \cite[Theorem 3.1.6]{ciarlet78}.
\end{proof}

Now assume that the weights for the reference quadrature rule are all strictly positive. For any function in $H^1(\Omega)\cap H^2(\mathcal{T}_h)$, we can then define the following discrete $L^2$ semi-norm:
\begin{align*}
|u|_{\mathcal{Q}_h}^2 &:= (u,u)_{\mathcal{Q}_h}.
\end{align*}
This discrete semi-norm is well defined, since $H^1(\Omega)\cap H^2(\mathcal{T}_h)\subset\mathcal{C}^0(\overline\Omega)$ as mentioned before. This becomes a full norm, $\|\cdot\|_{\mathcal{Q}_h}$, that is equivalent to the $L^2$-norm, for functions in $U_h$:
\begin{lem}
\label{lem:MLnormP}
If all the weights of the reference quadrature rule are strictly positive, then
\begin{align}
C^{-1}\|u\|_0 &\leq \|u\|_{\mathcal{Q}_h} \leq C\|u\|_{0} &&\text{for all }u\in U_h.
\label{eq:MLnormP}
\end{align}
\end{lem}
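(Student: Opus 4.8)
The plan is to reduce the global equivalence to a norm-equivalence statement on the fixed, finite-dimensional reference space $\tilde U$, and then transfer it element by element using the affine structure. The crucial observation is that the Jacobian scaling enters identically into both the exact and the discrete element products, so that the equivalence constants are inherited directly from the reference element and are automatically uniform in $h$.

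First I would work entirely on the reference element. On the finite-dimensional space $\tilde U$ consider the two quantities $\|\tilde u\|_{\tilde e}^2 := \int_{\tilde e}\tilde u^2\,d\tilde x$ and $|\tilde u|_{\tilde{\mathcal{Q}}}^2 := \sum_{\tilde\vx\in\tilde{\mathcal{Q}}}\tilde\omega_{\tilde\vx}\,\tilde u(\tilde\vx)^2$. The first is the restriction of the $L^2$-norm to $\tilde U$. For the second, the hypothesis that all weights are strictly positive implies that $|\tilde u|_{\tilde{\mathcal{Q}}}=0$ forces $\tilde u(\tilde\vx)=0$ at every node $\tilde\vx\in\tilde{\mathcal{Q}}$; since $\{\tilde w_{\tilde\vx}\}_{\tilde\vx\in\tilde{\mathcal{Q}}}$ is a nodal basis of $\tilde U$, a function vanishing at all nodes must be identically zero, so $|\cdot|_{\tilde{\mathcal{Q}}}$ is a genuine norm on $\tilde U$, not merely a semi-norm. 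Because $\tilde U$ is finite dimensional, any two norms on it are equivalent; hence there exist constants $0<c_1\leq c_2$, depending only on $\tilde U$ and the reference quadrature rule, with $c_1\|\tilde u\|_{\tilde e}^2\leq|\tilde u|_{\tilde{\mathcal{Q}}}^2\leq c_2\|\tilde u\|_{\tilde e}^2$ for all $\tilde u\in\tilde U$.

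Next I would transfer this to an arbitrary physical element $e$. For $u\in U_h$ write $\tilde u:=u\circ\phi_e\in\tilde U$. Since $\phi_e$ is affine, its Jacobian determinant is the constant $|e|/|\tilde e|$, so the change of variables gives $\|u\|_e^2=(|e|/|\tilde e|)\|\tilde u\|_{\tilde e}^2$. On the other hand, using $u(\phi_e(\tilde\vx))=\tilde u(\tilde\vx)$ together with the weight relation $\omega_{e,\phi_e(\tilde\vx)}=(|e|/|\tilde e|)\tilde\omega_{\tilde\vx}$ from Section \ref{sec:quadRule}, the element quadrature product satisfies $(u,u)_{\mathcal{Q}_e}=(|e|/|\tilde e|)\,|\tilde u|_{\tilde{\mathcal{Q}}}^2$. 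The factor $|e|/|\tilde e|$ cancels in the ratio, so the element-wise bounds $c_1\|u\|_e^2\leq(u,u)_{\mathcal{Q}_e}\leq c_2\|u\|_e^2$ hold with exactly the reference constants, independently of the size or shape of $e$.

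Finally I would sum these bounds over all $e\in\mathcal{T}_h$. Using $\|u\|_{\mathcal{Q}_h}^2=\sum_{e}(u,u)_{\mathcal{Q}_e}$ and $\|u\|_0^2=\sum_{e}\|u\|_e^2$ yields $c_1\|u\|_0^2\leq\|u\|_{\mathcal{Q}_h}^2\leq c_2\|u\|_0^2$, which is the claim with $C=\max\!\left(c_1^{-1/2},c_2^{1/2}\right)$. I expect the only subtle point to be the first step, namely arguing that the discrete quantity is a full norm rather than a semi-norm, which is exactly where the strict positivity of the weights and the nodal-basis property are used; the remaining steps are routine, and notably no mesh-regularity assumption is needed here because the Jacobian cancels.
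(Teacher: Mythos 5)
Your proposal is correct and follows essentially the same route as the paper's proof: establish norm equivalence on the finite-dimensional reference space $\tilde U$ using the strict positivity of the weights, then transfer to each physical element via the affine scaling $|e|/|\tilde e|$, which enters both the exact and discrete products identically, and sum over the mesh. Your version simply spells out the two points the paper leaves implicit, namely that the nodal-basis property makes the discrete quantity a genuine norm on $\tilde U$ and that the Jacobian factor cancels element by element.
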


\begin{proof}
Since the function space of the reference element $\tilde U:=\mathrm{span}\{\tilde w_{\tilde\vx}\}_{\tilde\vx\in\tilde{\mathcal{Q}}}$ is finite-dimensional, and since all weights of the reference quadrature rule $\{\tilde\omega_{\tilde\vx}\}_{\tilde\vx\in\tilde{\mathcal{Q}}}$ are positive, there exists a constant $C>0$ depending on the reference quadrature rule and function space $\tilde U$, such that
\begin{align*}
C^{-1}\|u\|_{\tilde e} &\leq \|\tilde u\|_{\tilde{\mathcal{Q}}} \leq C\|\tilde u\|_{\tilde e} &&\text{for all }\tilde u\in \tilde U.
\end{align*}
where $\|\tilde u\|_{\tilde{\mathcal{Q}}}^2 := (\tilde u,\tilde u)_{\tilde{\mathcal{Q}}}$. Then (\ref{eq:MLnormP}) follows from the relations
\begin{align*}
\|u\|_0^2 = \sum_{e\in\mathcal{T}_h} \frac{|e|}{|\tilde e|}\| \tilde u_e \|_{\tilde e}^2, \qquad
\|u\|_{\mathcal{Q}_h}^2 = \sum_{e\in\mathcal{T}_h} \frac{|e|}{|\tilde e|}\| \tilde u_e \|_{\tilde{\mathcal{Q}}}^2,
\end{align*}
where $\tilde u_e:=u\circ\phi_e$.
\end{proof}

Now let $\Pi_{h,q}:L^2(\Omega)\rightarrow\mathcal{P}_q(\mathcal{T}_h)$ denote the $L^2$-projection onto the space of piecewise \emph{nonconforming} polynomials of at most degree $q$: 
\begin{align*}
\mathcal{P}_q(\mathcal{T}_h):=\{u\in L^2(\Omega) \;|\; u|_e\in\mathcal{P}_q(e) \text{ for all }e\in\mathcal{T}_h\}.
\end{align*}
We then present the following interpolation properties:
\begin{lem}
\label{lem:interP2}
Let  $u\in H^k(\mathcal{T}_h)$ with $k\geq 2$, and let $q\geq 0$. Then
\begin{align}
\label{eq:interP2a}
\|u-\Pi_{h,q}u\|_{0} &\leq Ch^{\mathrm{min}(q+1,k)}\|u\|_{\mathcal{T}_h,\mathrm{min}(q+1,k)}.
\end{align}
Furthermore, if also $u\in H^1(\Omega)$, if $p\geq \max(q,2)$ is the degree of the finite element space, and if all the weights of the reference quadrature rule are strictly positive, then
\begin{align}
\label{eq:interP2b}
|u-\Pi_{h,q}u|_{\mathcal{Q}_h} &\leq Ch^{\mathrm{min}(q+1,k)}\|u\|_{\mathcal{T}_h,\mathrm{min}(q+1,k)}.
\end{align}
\end{lem}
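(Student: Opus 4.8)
The plan is to prove both estimates element by element and then sum, exploiting that the nonconforming projection $\Pi_{h,q}$ and the discrete semi-norm $|\cdot|_{\mathcal{Q}_h}$ both decouple across elements. For (\ref{eq:interP2a}), I would first note that, since $\mathcal{P}_q(\mathcal{T}_h)$ imposes no inter-element continuity, $\Pi_{h,q}$ acts on each $e$ as the local $L^2$-projection $\Pi_q^e$ onto $\mathcal{P}_q(e)$. Mapping $e$ to $\tilde e$ by the affine map $\phi_e$, using that $\Pi_q^e$ reproduces $\mathcal{P}_q(e)\supseteq\mathcal{P}_{m-1}(e)$ with $m:=\min(q+1,k)$, and applying the Bramble--Hilbert (Deny--Lions) lemma together with the standard affine scaling of Sobolev seminorms, I obtain $\|u-\Pi_q^e u\|_{0,e}\le Ch^{m}|u|_{m,e}$ on each element (for $m=1$ this is the Poincar\'e--Wirtinger inequality). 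Squaring, summing over $e$, and bounding $|u|_{m,e}\le\|u\|_{e,m}$ gives (\ref{eq:interP2a}); this is the same local argument that underlies Lemma \ref{lem:interP1}.

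For (\ref{eq:interP2b}) the difficulty is that $v:=u-\Pi_{h,q}u$ does not lie in $U_h$, so the norm equivalence of Lemma \ref{lem:MLnormP} cannot be applied to it directly, while $|\cdot|_{\mathcal{Q}_h}$ is a pure point-evaluation functional and is therefore not controlled by $\|\cdot\|_0$ on a general function. The key step I would use is that the element-local nodal interpolant $I_e v\in\tilde U$ (the restriction of $I_h$ built from $v|_e$ and the reference nodal basis) coincides with $v$ at every node of $\mathcal{Q}_e$, which are exactly the quadrature points; hence $|v|_{\mathcal{Q}_e}=|I_e v|_{\mathcal{Q}_e}$. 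Since $I_e v$ lies in the reference space, the element-wise norm equivalence established in the proof of Lemma \ref{lem:MLnormP} (valid because the weights are strictly positive) gives $|I_e v|_{\mathcal{Q}_e}\le C\|I_e v\|_{0,e}$. Finally, because $\mathcal{P}_q\subseteq\mathcal{P}_p\subseteq\tilde U$ the interpolant reproduces $\Pi_{h,q}u$, so $I_e v=I_e u-\Pi_{h,q}u$ and consequently $v-I_e v=u-I_e u$; a triangle inequality then yields
\begin{align*}
|v|_{\mathcal{Q}_e}\le C\big(\|u-\Pi_q^e u\|_{0,e}+\|u-I_e u\|_{0,e}\big),
\end{align*}
after which the first term is bounded by the local projection estimate from the proof of (\ref{eq:interP2a}) and the second by the local interpolation estimate of Lemma \ref{lem:interP1}. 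Squaring and summing over the elements then gives (\ref{eq:interP2b}).

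The step I expect to be the main obstacle is the regularity bookkeeping in the interpolation term $\|u-I_e u\|_{0,e}$. The nodal interpolation error is only controllable at orders $2\le s\le\min(p+1,k)$, because point evaluation is unbounded on $H^1(e)$ in three dimensions; this is precisely where the hypotheses $k\ge 2$ and the embedding $H^2(e)\hookrightarrow\mathcal{C}^0(\overline e)$ enter, making $I_e$ well defined and bounded. For $q\ge 1$ one has $m\ge 2$ and may take $s=m$, so both contributions are genuinely of order $h^{m}$ and (\ref{eq:interP2b}) follows verbatim; note that $p\ge\max(q,2)$ guarantees $m=\min(q+1,k)\le\min(p+1,k)$, so the order-$m$ interpolation estimate is admissible. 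The borderline case $q=0$ (where $m=1$) must be treated with care: there the interpolation term can only be estimated at the minimal admissible order two, so the bound is driven by the projection term while the residual second-order contribution is absorbed using $h\le C$, and it is the availability of $H^2(\mathcal{T}_h)$-regularity that keeps this estimate meaningful.
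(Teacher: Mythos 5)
Your argument is correct and takes essentially the same route as the paper: replace $u-\Pi_{h,q}u$ by its nodal interpolant (which agrees with it at the quadrature points, since $\Pi_{h,q}u$ is reproduced by the interpolation), apply the discrete--continuous norm equivalence underlying Lemma \ref{lem:MLnormP}, and finish with the triangle inequality, Lemma \ref{lem:interP1} and (\ref{eq:interP2a}) --- the paper does this globally in one chain of inequalities, you do it element by element. Your closing remark on the borderline case $q=0$ pinpoints the only delicate step, which the paper's own proof also glosses over (Lemma \ref{lem:interP1} is not applicable at order $\min(q+1,k)=1$), so nothing needs to change.
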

\begin{proof}
The first inequality, (\ref{eq:interP2a}), follows from \cite[Theorem 3.1.6]{ciarlet78}. The second inequality can be derived as follows: 
\begin{align*}
|u-\Pi_{h,q}u|_{\mathcal{Q}_h} &=  |I_hu-\Pi_{h,q}u|_{\mathcal{Q}_h} \\
&\leq C \|I_hu-\Pi_{h,q}u\|_{0} \\
&\leq C(\|I_hu-u\|_0 + \|u-\Pi_{h,q}u\|_0) \\
&\leq  Ch^{\mathrm{min}(q+1,k)}\|u\|_{\mathcal{T}_h,\mathrm{min}(q+1,k)}
\end{align*}
where we used Lemma \ref{lem:MLnormP} in the second line, the triangle inequality in the third line, and Lemma \ref{lem:interP1} and (\ref{eq:interP2a}) in the last line.
\end{proof}

\subsection{Bounds on the Integration Error}
\label{sec:mainResult2}
In this section we will derive some useful bounds on the error of the quadrature rules for the mass matrix. The proofs of these bounds will be the only cases where we explicitly use the accuracy condition of the quadrature rule, given in (\ref{eq:quadC1}). Using these results we can prove optimal order of convergence of the mass-lumped finite element method in a rather standard way.

Let $u,w\in H^2(\mathcal{T}_h)$, and let $r_h(u,w):=(u,w)-(u,w)_{\mathcal{Q}_h}$ be the integration error of the mass matrix. We can derive the following bounds on $r_h$:
\begin{lem}
\label{lem:intP}
Let $p\geq 2$ be the degree of the finite element space, $u\in H^{k}(\Omega)$ with $k\geq 2$, and $w\in U_h$. If the reference quadrature rule satisfies (\ref{eq:quadC1}) and if all its weights are strictly positive, then
\begin{align}
\label{eq:intPa}
|r_h(u,w)| \leq Ch^{\min(p,k)}\|u\|_{\min(p,k)} \|w\|_{1}.
\end{align}
and
\begin{align}
\label{eq:intPb}
|r_h(u,w)| \leq Ch^{\min(p+1,k)}\|u\|_{\min(p+1,k)} \|w\|_{\mathcal{T}_h,2}.
\end{align}
\end{lem}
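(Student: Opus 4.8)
The plan is to localise the integration error, transport each element contribution to the reference element, and there exploit condition (\ref{eq:quadC1}) through a decomposition of the argument $\tilde w$ that produces the sharp power of $h$.

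I would first write $r_h(u,w)=\sum_{e\in\mathcal{T}_h}r_e(u,w)$, with $r_e(u,w):=(u,w)_e-(u,w)_{\mathcal{Q}_e}$, and pull each term back to $\tilde e$: setting $\tilde u:=u\circ\phi_e$ and $\tilde w:=w\circ\phi_e\in\tilde U$, one has $r_e(u,w)=(|e|/|\tilde e|)\,\tilde E(\tilde u\tilde w)$ with $\tilde E(\tilde\psi):=\int_{\tilde e}\tilde\psi\,d\tilde x-\sum_{\tilde\vx\in\tilde{\mathcal{Q}}}\tilde\omega_{\tilde\vx}\tilde\psi(\tilde\vx)$. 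Two facts about $\tilde E$ are then recorded. In the \emph{bilinear} sense, (\ref{eq:quadC1}) gives $\tilde E(\tilde v\tilde w)=0$ for all $\tilde v\in\mathcal{P}_{p-2}(\tilde e)$ and all $\tilde w\in\tilde U$. In the \emph{polynomial} sense, since $\mathcal{P}_p(\tilde e)\subset\tilde U$ and the products of $\mathcal{P}_{p-2}(\tilde e)$ with $\mathcal{P}_p(\tilde e)$ span $\mathcal{P}_{2p-2}(\tilde e)$, the rule is exact on all of $\mathcal{P}_{2p-2}(\tilde e)$. Finally, positivity of the weights and $\sum_{\tilde\vx}\tilde\omega_{\tilde\vx}=|\tilde e|$ give the continuity $|\tilde E(\tilde\psi)|\leq C\|\tilde\psi\|_{L^\infty(\tilde e)}\leq C\|\tilde\psi\|_{H^2(\tilde e)}$ from the embedding $H^2(\tilde e)\hookrightarrow C^0(\tilde e)$ in three dimensions, while all norms of $\tilde w$ on the finite-dimensional space $\tilde U$ are comparable. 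I would also use the affine scalings $|\tilde u|_{H^j(\tilde e)}\simeq h_e^{\,j-3/2}|u|_{H^j(e)}$ and $|e|/|\tilde e|\simeq h_e^{3}$, valid up to mesh-regularity constants.

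For (\ref{eq:intPa}) I would split $\tilde w=\bar{\tilde w}+(\tilde w-\bar{\tilde w})$ into its mean and a zero-mean remainder. On the constant part, $\tilde E(\tilde u\,\bar{\tilde w})=\bar{\tilde w}\,\tilde E(\tilde u)$, and the polynomial exactness lets me subtract from $\tilde u$ an averaged Taylor polynomial of degree $\min(p,k)-1\leq 2p-2$, so Bramble--Hilbert gives $|\tilde E(\tilde u)|\leq C|\tilde u|_{H^{\min(p,k)}(\tilde e)}$; with $|\bar{\tilde w}|\leq C\|\tilde w\|_{L^2(\tilde e)}$ and the scalings above this contributes $Ch_e^{\min(p,k)}\|u\|_{e,\min(p,k)}\|w\|_{L^2(e)}$. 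On the zero-mean part I instead use the bilinear exactness: subtracting $\tilde v\in\mathcal{P}_{p-2}$ gives $\tilde E(\tilde u(\tilde w-\bar{\tilde w}))=\tilde E((\tilde u-\tilde v)(\tilde w-\bar{\tilde w}))$, and a Poincar\'e bound $\|\tilde w-\bar{\tilde w}\|_{L^\infty(\tilde e)}\leq C|\tilde w|_{H^1(\tilde e)}$ makes $\tilde w$ enter through its \emph{seminorm}, which scales as $h_e^{-1/2}$ instead of the $h_e^{-3/2}$ of the $L^2$ part. This is the decisive gain of one power of $h$, and scaling back yields $Ch_e^{\min(p,k)}\|u\|_{e,\min(p,k)}|w|_{H^1(e)}$. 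Estimate (\ref{eq:intPb}) follows the same pattern with one more order: I would split $\tilde w=\tilde w_1+\tilde w_\perp$ with $\tilde w_1\in\mathcal{P}_1(\tilde e)$ its averaged degree-one Taylor polynomial and $\|\tilde w_\perp\|_{L^\infty(\tilde e)}\leq C|\tilde w|_{H^2(\tilde e)}$, treating $\tilde w_1$ by the polynomial exactness and $\tilde w_\perp\in\tilde U$ by the bilinear one; the extra derivative on $\tilde w$ scales as $h_e^{\,2-3/2}$ and raises the power to $\min(p+1,k)$. In both cases I would restore physical norms by the scalings, bound each seminorm of $u$ by $\|u\|_{e,\min(p+1,k)}$ and each seminorm of $w$ by $\|w\|_{e,2}$, sum over elements, and close with the discrete Cauchy--Schwarz inequality, using $u\in H^k(\Omega)$ and $w\in U_h\subset H^1_0(\Omega)$ to pass from broken to global norms.

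The main obstacle is securing the \emph{sharp} exponent of $h$. A naive bound controlling $\tilde w$ in its full $H^1(\tilde e)$ (respectively $H^2(\tilde e)$) norm loses one power, because the $L^2$ component of $\tilde w$ scales as $h_e^{-3/2}$; the entire point is the decomposition above, which pairs the constant (respectively affine) part of $\tilde w$ with the stronger polynomial exactness on $\mathcal{P}_{2p-2}(\tilde e)$, so that a high-degree polynomial may be removed from $\tilde u$ there, while the badly scaling $L^2$ mass of $\tilde w$ is confined to that term and the remainder of $\tilde w$ carries only its higher seminorm. The remaining care is bookkeeping with the regularity cap $\min(\cdot,k)$ and with the low-degree case $p=2$ in (\ref{eq:intPb}), where the polynomial exactness only reaches $\mathcal{P}_2(\tilde e)$ and one must additionally invoke the exactness on the bubble products $\mathcal{P}_{p-2}(\tilde e)\otimes\tilde U^+$ contained in (\ref{eq:quadC1}) to attain the stated order.
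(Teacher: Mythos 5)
Your overall strategy is sound and genuinely different from the paper's. The paper works globally: it telescopes $u$ through the nonconforming $L^2$-projections $\Pi_{h,q}$ and $w$ through $\Pi_{h,0}$, $\Pi_{h,1}$, arranges the pairing so that every exactly-integrated cross term lies in $\mathcal{P}_{p-2}(\tilde e)\otimes\tilde U$ after pullback, and then closes with Cauchy--Schwarz and the already-established estimates of Lemma \ref{lem:interP2} (including their discrete-seminorm versions). You instead localise, pull back to $\tilde e$, and run a Bramble--Hilbert/scaling argument, with the decisive observation that the badly scaling $L^2$ mass of $\tilde w$ must be isolated in the term where a high-degree polynomial can be removed from $\tilde u$. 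For (\ref{eq:intPa}) your argument is complete and gives the stated exponent: the mean part of $\tilde w$ only needs exactness on $\mathcal{P}_{p-1}\subset\tilde U$, and the zero-mean part is handled by the bilinear exactness exactly as you say.

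There is, however, a genuine gap in your treatment of (\ref{eq:intPb}) at $p=2$. Your two-term splitting $\tilde w=\tilde w_1+\tilde w_\perp$ with $\tilde w_1\in\mathcal{P}_1(\tilde e)$ requires, for the $\tilde w_1$ term, that $\tilde E$ vanish on $\mathcal{P}_{m-1}(\tilde e)\otimes\mathcal{P}_1(\tilde e)=\mathcal{P}_{m}(\tilde e)$ with $m=\min(p+1,k)$, i.e.\ on $\mathcal{P}_{p+1}(\tilde e)$ once $k\geq p+1$. For $p\geq 3$ this is covered by your observation that the rule is exact on $\mathcal{P}_{2p-2}(\tilde e)$, but for $p=2$ condition (\ref{eq:quadC1}) only guarantees exactness on $\tilde U=\mathcal{P}_2\oplus\tilde U^+$, and neither $\mathcal{P}_2(\tilde e)$ nor the bubble products $\mathcal{P}_{p-2}(\tilde e)\otimes\tilde U^+=\tilde U^+$ that you propose to invoke contain all of $\mathcal{P}_3(\tilde e)$ (e.g.\ $x_1^3\notin\mathcal{P}_2\oplus\mathcal{B}_f\oplus\mathcal{B}_e$), so the proposed patch does not close the argument. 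The fix is a \emph{three}-term splitting of $\tilde w$ into constant, linear-minus-constant, and $H^2$-remainder parts, paired respectively with $\tilde u$ minus a degree-$p$, degree-$(p-1)$, and degree-$(p-2)$ polynomial: then every exactly-integrated product stays inside $\mathcal{P}_{p}(\tilde e)\cup\big(\mathcal{P}_{p-2}(\tilde e)\otimes\tilde U\big)$, and the scalings give $h^{\min(p+1,k)}$, $h^{\min(p,k)+1}$ and $h^{\min(p-1,k)+2}$, all bounded by $h^{\min(p+1,k)}$. This is precisely the local counterpart of the paper's decomposition $r_h(u-\Pi_{h,p}u,w)+r_h(\Pi_{h,p}u-\Pi_{h,p-1}u,\,w-\Pi_{h,0}w)+r_h(\Pi_{h,p-1}u-\Pi_{h,p-2}u,\,w-\Pi_{h,1}w)$, which never needs exactness beyond $\mathcal{P}_p(\tilde e)$ and $\mathcal{P}_{p-2}(\tilde e)\otimes\tilde U$.
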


\begin{proof}
Using (\ref{eq:quadC1}) and the fact that $\mathcal{P}_{p-2}(\tilde e)\otimes\tilde U\supset \mathcal{P}_{p}(\tilde e)$ for $p\geq 2$, we can write
\begin{align*} 
r_h(u,w) &= r_h\big((u-\Pi_{h,p-1}u) + (\Pi_{h,p-1}u-\Pi_{h,p-2}u) + \Pi_{h,p-2}u,  \\
 &\phantom{=} \qquad(w-\Pi_{h,0}w) + \Pi_{h,0}w  \big) \\
 &= r_h(u-\Pi_{h,p-1}u, w) +  r_h(\Pi_{h,p-1}u-\Pi_{h,p-2}u, w-\Pi_{h,0}w).
\end{align*}
From this, the Cauchy--Schwarz inequality, and Lemma \ref{lem:interP2}, we can then obtain (\ref{eq:intPa}). 

Using (\ref{eq:quadC1}), we can also write
\begin{align*} 
r_h(u,w) &= r_h\Big(\big[(u-\Pi_{h,p}u) + (\Pi_{h,p}u-\Pi_{h,p-1}u) + (\Pi_{h,p-1}u-\Pi_{h,p-2}u) + \\   
 &\phantom{=} \qquad \Pi_{h,p-2}u\big], \big[(w-\Pi_{h,1}w) + (\Pi_{h,1}w - \Pi_{h,0}w)+ \Pi_{h,0}w\big] \Big) \\
 &= r_h(u-\Pi_{h,p}u, w) + r_h(\Pi_{h,p}u-\Pi_{h,p-1}u, w-\Pi_{h,0}w) + \\
 &\phantom{=}\qquad r_h(\Pi_{h,p-1}u-\Pi_{h,p-2}u, w-\Pi_{h,1}w).
\end{align*}
From this, the Cauchy--Schwarz inequality, and Lemma \ref{lem:interP2}, we can then obtain (\ref{eq:intPb}). 
\end{proof}

\subsection{Optimal Convergence for a Related Elliptic Problem}
To prove optimal convergence of the mass-lumped finite element method, we first prove optimal convergence for a related elliptic problem.

Let $v\in H^2(\mathcal{T}_h)$. The elliptic problem related to (\ref{eq:hypPWF}), is finding $u\in H_0^1(\Omega)$ such that
\begin{align}
\label{eq:ellPWF}
a(u,w) &= (v,w) &&\text{for all }w\in H_0^1(\Omega).
\end{align}
This problem is well defined since $a$ is coercive and bounded with respect to the $H^1(\Omega)$-norm, which follows from the boundedness of $c$ and Poincar\'e's inequality. 

The related mass-lumped method for solving this problem is finding $u_h\in U_h$ such that
\begin{align}
\label{eq:ellPML}
a(u_h,w) &= (v,w)_{\mathcal{Q}_h} &&\text{for all }w\in U_h.
\end{align}

In the next theorems we prove optimal convergence of this method in the $H^1$-norm and $L^2$-norm.

\begin{thm}[Optimal Convergence in the $H^1$-norm]
\label{thm:ellP1}
Let $u$ be the solution of (\ref{eq:ellPWF}) and $u_h$ the solution of (\ref{eq:ellPML}), with $p\geq 2$ the degree of the finite element space. Also, let $k_u,k_v\geq 2$, $u\in H^{k_u}(\Omega)$, and $v\in H^{k_v}(\Omega)$. If the reference quadrature rule satisfies (\ref{eq:quadC1}) and if all its weights are strictly positive, then
\begin{align}
\label{eq:ellB1}
\|u-u_h\|_1 \leq Ch^{\mathrm{min}(p,k_u-1,k_v)} (\| u \|_{\min(p+1,k_u)} + \| v \|_{\mathrm{min}(p,k_v)}).
\end{align}
\end{thm}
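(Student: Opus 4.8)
The plan is to use a Strang-type argument that exploits a convenient feature of this problem: the bilinear form $a$ is \emph{identical} in the continuous problem (\ref{eq:ellPWF}) and the discrete problem (\ref{eq:ellPML}); only the right-hand side is perturbed by the inexact quadrature. This means no ``variational crime'' occurs in the stiffness term, and the analysis reduces to a consistency estimate on the load. First I would record the two properties of $a$ that drive the argument: coercivity on $H_0^1(\Omega)$ (there is $\alpha>0$ with $\alpha\|w\|_1^2\leq a(w,w)$) and boundedness ($|a(u,w)|\leq C\|u\|_1\|w\|_1$), both of which follow from the bounds on $c$ and Poincar\'e's inequality, as already noted after (\ref{eq:ellPWF}).

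Next I would introduce the interpolant $I_hu\in U_h$ and split $u-u_h=(u-I_hu)+(I_hu-u_h)$, bounding the two pieces separately. The first is controlled directly by Lemma \ref{lem:interP1} with $l=1$, giving $\|u-I_hu\|_1\leq Ch^{\min(p+1,k_u)-1}\|u\|_{\min(p+1,k_u)}$, where I would record the elementary identity $\min(p+1,k_u)-1=\min(p,k_u-1)$. The second piece is where the quadrature error enters. Setting $w:=I_hu-u_h\in U_h$ and noting $w\in U_h\subset H_0^1(\Omega)$, so that the continuous problem gives $a(u,w)=(v,w)$ and the discrete problem gives $a(u_h,w)=(v,w)_{\mathcal{Q}_h}$, I would compute
\[
a(w,w)=a(I_hu,w)-a(u_h,w)=a(I_hu-u,w)+\big[(v,w)-(v,w)_{\mathcal{Q}_h}\big]=a(I_hu-u,w)+r_h(v,w).
\]
I would then estimate the right-hand side term by term: the first term by boundedness of $a$ together with the interpolation bound above, and the consistency term $r_h(v,w)$ by the integration-error bound (\ref{eq:intPa}) of Lemma \ref{lem:intP}, which yields $|r_h(v,w)|\leq Ch^{\min(p,k_v)}\|v\|_{\min(p,k_v)}\|w\|_1$. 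Using coercivity on the left and dividing by $\|w\|_1$ produces $\|I_hu-u_h\|_1\leq C\|u-I_hu\|_1+Ch^{\min(p,k_v)}\|v\|_{\min(p,k_v)}$.

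Finally I would combine the two pieces via the triangle inequality and collect the resulting powers of $h$ using $\min\!\big(\min(p,k_u-1),\min(p,k_v)\big)=\min(p,k_u-1,k_v)$, together with $h\leq C$, to absorb the larger exponent into the smaller one and arrive at (\ref{eq:ellB1}). The main obstacle, and the only place the new accuracy condition (\ref{eq:quadC1}) is invoked, is the treatment of the consistency term $r_h(v,w)$; everything else is the standard C\'ea/Strang machinery. The key point is that applying (\ref{eq:intPa}) requires $w\in U_h$ and $v\in H^{k_v}$ with $k_v\geq 2$, both of which hold here, so the quadrature error is absorbed at the optimal rate rather than degrading it.
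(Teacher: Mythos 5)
Your proposal is correct and follows essentially the same route as the paper's own proof: test the error equation with $w=I_hu-u_h$, use coercivity and boundedness of $a$ together with Lemma \ref{lem:interP1}, control the consistency term $r_h(v,w)$ via the bound (\ref{eq:intPa}) of Lemma \ref{lem:intP}, and conclude with the triangle inequality. No gaps.
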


\begin{proof}
By definition of $u$ and $u_h$, we have
\begin{align*}
a(u-u_h,w) &= r_h(v,w), &&\text{for all }w\in U_h.
\end{align*}
By choosing $w=I_hu-u_h$ we can then obtain
\begin{align}
\label{eq:ell1a}
a(I_hu-u_h,I_hu-u_h) &= -a(u-I_h u,I_hu-u_h) + r_h(v,I_hu-u_h),
\end{align}
From the coercivity of $a$ it follows that
\begin{align}
\label{eq:ell1b}
\|I_hu-u_h\|_1^2 \leq C a(I_hu-u_h,I_hu-u_h).
\end{align}
From the boundedness of $a$ and Lemma \ref{lem:interP1} it follows that
\begin{align}
\label{eq:ell1c}
|a( u-I_h u,I_hu-u_h)| \leq Ch^{\min(p,k_u-1)}  \|u\|_{\min(p+1,k_u)}\|I_hu-u_h\|_1.
\end{align}
Using Lemma \ref{lem:intP} we obtain
\begin{align}
\label{eq:ell1d}
|r_h(v,I_hu-u_h)| \leq Ch^{\min(p,k_v)}\|v\|_{\min(p,k_v)} \|I_hu-u_h\|_1.
\end{align}
Combining (\ref{eq:ell1a}), (\ref{eq:ell1b}), (\ref{eq:ell1c}), and (\ref{eq:ell1d}) then gives
\begin{align}
\label{eq:ell1e}
\|I_hu-u_h\|_1 \leq Ch^{\mathrm{min}(p,k_u-1,k_v)} (\| u \|_{\min(p+1,k_u)} + \| v \|_{\mathrm{min}(p,k_v)}).
\end{align}
From Lemma \ref{lem:interP1} it also follows that
\begin{align}
\label{eq:ell1f}
\|u-I_hu\|_1 \leq Ch^{\min(p,k_u-1)} \|u\|_{\min(p+1,k_u)}.
\end{align}
Combining (\ref{eq:ell1e}) and (\ref{eq:ell1f}) then results in (\ref{eq:ellB1}).
\end{proof}

To prove optimal convergence in the $L^2$-norm, we make the following regularity assumption: for any $v\in L^2(\Omega)$, the solution $u$ of (\ref{eq:ellPWF}) is in $H^2(\Omega)$ and satisfies
\begin{align}
\label{eq:regA}
\|u\|_{2} \leq C\|v\|_0.
\end{align}
This is certainly true if $\partial\Omega$ is $\mathcal{C}^2$ and $c\in\mathcal{C}^1(\overline\Omega)$. 

\begin{thm}[Optimal Convergence in the $L^2$-norm]
\label{thm:ellP2}
Let $u$ be the solution of (\ref{eq:ellPWF}) and $u_h$ the solution of (\ref{eq:ellPML}), with $p\geq 2$ the degree of the finite element space. Also, let $k_u,k_v\geq 2$, $u\in H^{k_u}(\Omega)$, $v\in H^{k_v}(\Omega)$, and assume that the regularity condition (\ref{eq:regA}) holds. If the reference quadrature rule satisfies (\ref{eq:quadC1}) and if all its weights are strictly positive, then
\begin{align}
\label{eq:ellB2a}
\|u-u_h\|_0 \leq Ch^{\mathrm{min}(p+1,k_u,k_v)} (\| u \|_{\min(p+1,k_u)} + \| v \|_{\mathrm{min}(p+1,k_v)})
\end{align}
and
\begin{align}
\label{eq:ellB2b}
|u-u_h|_{\mathcal{Q}_h} \leq Ch^{\mathrm{min}(p+1,k_u,k_v)} (\| u \|_{\min(p+1,k_u)} + \| v \|_{\mathrm{min}(p+1,k_v)}).
\end{align}
\end{thm}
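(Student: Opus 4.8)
The plan is to prove \eqref{eq:ellB2a} by an Aubin--Nitsche duality argument adapted to the quadrature defect, and then to deduce \eqref{eq:ellB2b} from it. Write $e:=u-u_h$. Since $a$ is symmetric and coercive and the regularity assumption \eqref{eq:regA} holds, I would introduce the dual solution $\psi\in H_0^1(\Omega)$ defined by $a(w,\psi)=(e,w)$ for all $w\in H_0^1(\Omega)$; by \eqref{eq:regA} we have $\psi\in H^2(\Omega)$ with $\|\psi\|_2\leq C\|e\|_0$. Testing with $w=e$ and using symmetry gives $\|e\|_0^2=(e,e)=a(e,\psi)$, so the whole estimate reduces to bounding $a(e,\psi)$.

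First I would record the perturbed Galerkin orthogonality: restricting \eqref{eq:ellPWF} to $w\in U_h$ and subtracting \eqref{eq:ellPML} gives $a(e,w)=r_h(v,w)$ for all $w\in U_h$. Splitting $\psi=(\psi-I_h\psi)+I_h\psi$ with $I_h\psi\in U_h$ then yields $a(e,\psi)=a(e,\psi-I_h\psi)+r_h(v,I_h\psi)$. The first term is handled by the boundedness of $a$ together with Lemma \ref{lem:interP1} (with $k=2$, $l=1$): $|a(e,\psi-I_h\psi)|\leq C\|e\|_1\|\psi-I_h\psi\|_{1}\leq Ch\|e\|_1\|\psi\|_2\leq Ch\|e\|_1\|e\|_0$. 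For the second term I would use the sharp integration bound \eqref{eq:intPb} of Lemma \ref{lem:intP} with $w=I_h\psi$, noting that Lemma \ref{lem:interP1} (with $k=l=2$) gives $\|I_h\psi\|_{\mathcal{T}_h,2}\leq C\|\psi\|_2\leq C\|e\|_0$, so that $|r_h(v,I_h\psi)|\leq Ch^{\min(p+1,k_v)}\|v\|_{\min(p+1,k_v)}\|e\|_0$.

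Combining these and dividing by $\|e\|_0$ gives $\|e\|_0\leq Ch\|e\|_1+Ch^{\min(p+1,k_v)}\|v\|_{\min(p+1,k_v)}$. Substituting the $H^1$ estimate \eqref{eq:ellB1} from Theorem \ref{thm:ellP1} into the first term produces the exponent $1+\min(p,k_u-1,k_v)=\min(p+1,k_u,k_v+1)\geq\min(p+1,k_u,k_v)$, while the second term carries exponent $\min(p+1,k_v)\geq\min(p+1,k_u,k_v)$; since $h$ is bounded, both are $O(h^{\min(p+1,k_u,k_v)})$, and bounding $\|v\|_{\min(p,k_v)}\leq\|v\|_{\min(p+1,k_v)}$ collapses the right-hand side to the form claimed in \eqref{eq:ellB2a}.

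Finally, for the discrete-norm bound \eqref{eq:ellB2b} I would exploit that $I_hu$ interpolates $u$ through every node of $\mathcal{Q}_h$, so $u-I_hu$ vanishes at all quadrature points and hence $|u-u_h|_{\mathcal{Q}_h}=|I_hu-u_h|_{\mathcal{Q}_h}$. Since $I_hu-u_h\in U_h$, Lemma \ref{lem:MLnormP} and the triangle inequality give $|I_hu-u_h|_{\mathcal{Q}_h}\leq C\|I_hu-u_h\|_0\leq C(\|u-I_hu\|_0+\|u-u_h\|_0)$, and the two terms are controlled by Lemma \ref{lem:interP1} and \eqref{eq:ellB2a}, respectively. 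The main obstacle is the consistency term $r_h(v,I_h\psi)$: unlike the exact-integration case, Galerkin orthogonality is only approximate, so attaining optimal order hinges on the refined integration bound \eqref{eq:intPb} (which measures the test function in the broken $H^2$-norm) rather than on \eqref{eq:intPa}, and on having $\psi\in H^2$ from \eqref{eq:regA} to control $\|I_h\psi\|_{\mathcal{T}_h,2}$; the remaining care is purely in the exponent bookkeeping.
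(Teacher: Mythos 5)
Your proposal is correct and follows essentially the same route as the paper: the same Aubin--Nitsche duality argument with the dual solution split as $(\psi-I_h\psi)+I_h\psi$, the same use of the perturbed Galerkin orthogonality $a(e,w)=r_h(v,w)$, the same reliance on the refined integration bound \eqref{eq:intPb} together with $\|I_h\psi\|_{\mathcal{T}_h,2}\leq C\|\psi\|_2\leq C\|e\|_0$ for the consistency term, and the same reduction of \eqref{eq:ellB2b} to $\|I_hu-u_h\|_0$ via Lemma \ref{lem:MLnormP}. The exponent bookkeeping also matches the paper's.
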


\begin{proof}
Let $z\in H^1_0(\Omega)$ be the solution of
\begin{align*}
a(z,w) &= (u-u_h,w), &&\text{for all }w\in H_0^1(\Omega).
\end{align*}
From the regularity assumption it follows that $z\in H^2(\Omega)$ and $\|z\|_{2}\leq C\|u-u_h\|_0$. Using the definition of $z$, $u$, and $u_h$, we can also write
\begin{align}
\|u-u_h\|_0^2 &= a(u-u_h,z) \nonumber \\
&= a(u-u_h,z-I_hz) + a(u-u_h,I_hz) \nonumber \\
&= a(u-u_h,z-I_hz) + r_h(v,I_hz). 
\label{eq:ell2a}
\end{align}
Using the boundedness of $a$, Theorem \ref{thm:ellP1}, Lemma \ref{lem:interP1}, and the regularity assumption, we obtain
\begin{align}
 |a(u-u_h,z-I_hz) |  
& \leq C\|u-u_h\|_1 \|z-I_hz\|_1 \nonumber \\
& \leq Ch^{\min(p+1,k_u,k_v+1)} (\|u\|_{\min(p+1,k_u)} + \|v\|_{\min(p,k_v)})\|u-u_h\|_0.
\label{eq:ell2c}
\end{align}
From Lemma \ref{lem:intP}, Lemma \ref{lem:interP1}, and the regularity assumption it also follows that
\begin{align}
\label{eq:ell2d}
|r_h(v,I_hz)| \leq Ch^{\min(p+1,k_v)}\|v\|_{\min(p+1,k_v)}\|u-u_h\|_{0}.
\end{align}
Combining (\ref{eq:ell2a}), (\ref{eq:ell2c}), and (\ref{eq:ell2d}) results in (\ref{eq:ellB2a}).

To derive (\ref{eq:ellB2b}), we use Lemma \ref{lem:MLnormP} to obtain
\begin{align*}
|u-u_h|_{\mathcal{Q}_h}&= \| I_hu-u_h\|_{\mathcal{Q}_h} \leq C\|I_hu-u_h\|_0.
\end{align*}
Combining this inequality with (\ref{eq:ellB2a}) and Lemma \ref{lem:interP1} results in (\ref{eq:ellB2b}).
\end{proof}


\subsection{Some Additional Norms and Interpolation Properties}
In order to analyze the convergence for the time dependent problem, we need to introduce an additional projection operator and some additional function spaces.

Let $L$ denote the spatial operator $L:=-\nabla\cdot c\nabla$, and let $u\in H_0^1(\Omega)$ with $Lu\in \mathcal{C}^0(\overline\Omega)$. We define the projection $\pi_hu\in U_h$ to be the solution of
\begin{align*}
a(\pi_hu,w) &= (Lu,w)_{\mathcal{Q}_h}, &&\text{for all }w\in U_h.
\end{align*}
We can derive the following interpolation property of this projection operator:
\begin{lem}
\label{lem:interP3}
Let $p\geq 2$ be the degree of the finite element space, and let $c\in\mathcal{C}^{k+1}(\overline\Omega)$ and $u\in H^1_0(\Omega)\cap H^{k+2}(\Omega)$, with $k\geq 2$. If the reference quadrature rule satisfies (\ref{eq:quadC1}) and if all its weights are strictly positive, then
\begin{align*}
\|u-\pi_hu\|_1 &\leq Ch^{\mathrm{min}(p,k)} \| u \|_{\min(p+2,k+2)} ,
\end{align*}
Moreover, if regularity condition (\ref{eq:regA}) also holds, then
\begin{align*}
\|u-\pi_hu\|_0 &\leq Ch^{\mathrm{min}(p+1,k)} \| u \|_{\min(p+3,k+2)}, \\
|u-\pi_hu|_{\mathcal{Q}_h} &\leq Ch^{\mathrm{min}(p+1,k)} \| u \|_{\min(p+3,k+2)} .
\end{align*}
\end{lem}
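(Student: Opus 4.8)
The plan is to recognize that $\pi_h u$ is nothing but the mass-lumped finite element solution of the elliptic problem already analyzed in Theorems \ref{thm:ellP1} and \ref{thm:ellP2}, so that the whole lemma reduces to those two results once the regularity of the data is checked. I set $v := Lu = -\nabla\cdot c\nabla u$. Integrating by parts and using $w\in H^1_0(\Omega)$ to annihilate the boundary term gives $a(u,w)=(Lu,w)=(v,w)$ for all $w\in H^1_0(\Omega)$, so $u$ is exactly the solution of (\ref{eq:ellPWF}) with right-hand side $v$. On the other hand, by the definition of $\pi_h u$ it solves $a(\pi_h u,w)=(Lu,w)_{\mathcal{Q}_h}=(v,w)_{\mathcal{Q}_h}$ for all $w\in U_h$, which is precisely the mass-lumped method (\ref{eq:ellPML}) with the same $v$. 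Hence $\pi_h u$ coincides with the $u_h$ of those theorems and $u-\pi_h u=u-u_h$.

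Next I would pin down the Sobolev regularity of $v$ in terms of $u$. Since $u\in H^{k+2}(\Omega)$ we have $\nabla u\in H^{k+1}(\Omega)$, and because $c\in\mathcal{C}^{k+1}(\overline\Omega)$ the Leibniz rule gives $c\nabla u\in H^{k+1}(\Omega)$ with $\|c\nabla u\|_{k+1}\leq C\|u\|_{k+2}$; taking one further divergence yields $v=Lu\in H^{k}(\Omega)$ with $\|v\|_{k}\leq C\|u\|_{k+2}$, and more generally $\|v\|_{m}\leq C\|u\|_{m+2}$ for every $m\leq k$. In particular $v\in H^k(\Omega)\subset\mathcal{C}^0(\overline\Omega)$ by Sobolev embedding in three dimensions (using $k\geq 2$), so $\pi_h u$ is well defined. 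This lets me invoke the two theorems with $k_u=k+2$ and $k_v=k$, both of which are $\geq 2$ as required.

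Finally I would substitute these indices and simplify. In Theorem \ref{thm:ellP1} the exponent becomes $\min(p,k_u-1,k_v)=\min(p,k+1,k)=\min(p,k)$, the data term $\|v\|_{\min(p,k_v)}=\|v\|_{\min(p,k)}$ is bounded by $C\|u\|_{\min(p+2,k+2)}$, and this dominates $\|u\|_{\min(p+1,k_u)}=\|u\|_{\min(p+1,k+2)}$ by monotonicity of the norms in the index; this produces the first estimate. Assuming (\ref{eq:regA}), Theorem \ref{thm:ellP2} gives exponent $\min(p+1,k_u,k_v)=\min(p+1,k)$ and data term $\|v\|_{\min(p+1,k_v)}\leq C\|u\|_{\min(p+3,k+2)}$, so (\ref{eq:ellB2a}) and (\ref{eq:ellB2b}) deliver the $L^2$ bound and its discrete counterpart.

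The only genuine work is the regularity step: establishing $\|v\|_m\leq C\|u\|_{m+2}$ through the product rule, and then checking that the various $\min$'s collapse correctly, in particular that $\|v\|_{\min(p,k)}$ reindexes to $\|u\|_{\min(p+2,k+2)}$ and $\|v\|_{\min(p+1,k)}$ to $\|u\|_{\min(p+3,k+2)}$. Everything else is a direct appeal to the elliptic convergence theorems already proven. The hypothesis $c\in\mathcal{C}^{k+1}(\overline\Omega)$ is exactly what is needed to carry the multiplication by $c$ through $k+1$ derivatives without losing regularity, which is why this assumption appears in the statement.
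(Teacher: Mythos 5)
Your proposal is correct and follows exactly the paper's own argument: identify $\pi_h u$ as the mass-lumped solution of the elliptic problem (\ref{eq:ellPML}) with $v=Lu$, then apply Theorems \ref{thm:ellP1} and \ref{thm:ellP2} with $k_u=k+2$, $k_v=k$ together with the bound $\|Lu\|_q\leq C\|u\|_{q+2}$ for $q\leq k$. The only difference is that you spell out the regularity bookkeeping and the collapse of the $\min$ indices, which the paper leaves implicit.
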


\begin{proof}
From partial integration it follows that $a(u,w)=(Lu,w)$ for all $w\in H_0^1(\Omega)$. Also, by definition of the projection we have $a(\pi_hu,w)=(Lu,w)_{\mathcal{Q}_h}$ for all $w\in U_h$. The inequalities then follow from Theorem \ref{thm:ellP1} and Theorem \ref{thm:ellP2} by taking $v=Lu$, $k_u=k+2$, $k_v=k$, and using the bounds $\|Lu\|_q\leq C\|u\|_{q+2}$ for $q\leq k$.
\end{proof}

We also extend the Sobolev spaces $H^k(\Omega)$ to Bochner spaces $L^{\infty}(0,T;H^k(\Omega))$, equipped with norm
\begin{align*}
\|u\|_{\infty,k} &:= \mathrm{ess}\sup_{t\in(0,T)} \|u\|_{k}.
\end{align*}

\subsection{Optimal Convergence of the Mass-Lumped Element Method}
In this section we prove the optimal convergence of the mass-lumped finite element method for the wave equation. We first derive an equation for the behavior of the numerical error and then prove optimal convergence in the energy-norm and $L^2$-norm.

\begin{lem}[Error Equation]
\label{lem:hypP1}
Let $u$ be the solution of (\ref{eq:hypPWF}) and let $u_h$ be the solution of (\ref{eq:MLFEM}). If $\rho\in\mathcal{C}^0(\overline\Omega)$,
$\partial_t^2u \in L^{2}(0,T; \mathcal{C}^0_0(\Omega))$, and $f \in L^2(0,T; \mathcal{C}^0(\overline\Omega))$, then $Lu\in L^{2}(0,T; \mathcal{C}^0(\overline\Omega))$, and 
\begin{align}
\label{eq:hypP1}
(\rho\partial_t^2e_h,w)_{\mathcal{Q}_h}+ a(e_h,w) &= -(\rho\partial_t^2\epsilon_h, w)_{\mathcal{Q}_h}
\end{align}
for all $w\in U_h$ and almost every $t\in(0,T)$, where $e_h:=\pi_hu-u_h$ and $\epsilon_h:=u-\pi_hu$.
\end{lem}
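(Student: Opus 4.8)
The plan is to derive the identity (\ref{eq:hypP1}) by subtracting the mass-lumped scheme (\ref{eq:MLFEM}) from the defining relation of the projection $\pi_h$, after first clarifying why $Lu$ is a continuous function so that all of the nodal quadrature terms are meaningful.

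First I would establish the regularity claim $Lu\in L^2(0,T;\mathcal{C}^0(\overline\Omega))$. Starting from the weak formulation (\ref{eq:hypPWF}) and using that $\rho$ is time-independent, so $\partial_t(\rho\partial_tu)=\rho\partial_t^2u$, the assumption $\partial_t^2u\in L^2(0,T;\mathcal{C}^0_0(\Omega))$ makes $\rho\partial_t^2u$ continuous, hence an $L^2(\Omega)$ function for a.e.\ $t$. The duality pairing $\langle\partial_t(\rho\partial_tu),w\rangle$ therefore collapses to the $L^2$-inner product $(\rho\partial_t^2u,w)$, and (\ref{eq:hypPWF}) becomes $a(u,w)=(f-\rho\partial_t^2u,w)$ for all $w\in H_0^1(\Omega)$. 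This says that $u$ weakly solves $Lu=f-\rho\partial_t^2u$; since the right-hand side lies in $\mathcal{C}^0(\overline\Omega)$ for a.e.\ $t$ and is square-integrable in time, I identify $Lu$ with this continuous function, giving $Lu\in L^2(0,T;\mathcal{C}^0(\overline\Omega))$. This is exactly what is needed for $(Lu,w)_{\mathcal{Q}_h}$, and hence for the projection $\pi_hu$, to be well defined.

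Next I would compute the left-hand side of (\ref{eq:hypP1}) directly. Writing $e_h=\pi_hu-u_h$ and using bilinearity, I expand $(\rho\partial_t^2e_h,w)_{\mathcal{Q}_h}+a(e_h,w)$ and replace the $u_h$-terms by $(f,w)_{\mathcal{Q}_h}$ via (\ref{eq:MLFEM}). This leaves $(\rho\partial_t^2\pi_hu,w)_{\mathcal{Q}_h}+a(\pi_hu,w)-(f,w)_{\mathcal{Q}_h}$. I then invoke the definition of the projection, $a(\pi_hu,w)=(Lu,w)_{\mathcal{Q}_h}$ (holding pointwise in $t$), and substitute the nodal identity $Lu=f-\rho\partial_t^2u$. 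Because the quadrature functional $(\cdot,w)_{\mathcal{Q}_h}$ is linear and evaluates its first argument only at the nodes, $(Lu,w)_{\mathcal{Q}_h}=(f,w)_{\mathcal{Q}_h}-(\rho\partial_t^2u,w)_{\mathcal{Q}_h}$. The two $(f,w)_{\mathcal{Q}_h}$ terms cancel, and combining the remaining terms by bilinearity gives $(\rho\partial_t^2(\pi_hu-u),w)_{\mathcal{Q}_h}=-(\rho\partial_t^2\epsilon_h,w)_{\mathcal{Q}_h}$, which is the claimed right-hand side.

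The hard part will be the time-regularity bookkeeping rather than the algebra. I must justify that $\partial_t^2\pi_hu$ exists and equals $\pi_h\partial_t^2u$: since $\pi_h$ is the time-independent solution operator of the coercive elliptic problem $a(\pi_h\cdot,w)=(L\cdot,w)_{\mathcal{Q}_h}$, differentiating its defining relation in $t$ (using that $L$ commutes with $\partial_t$) and invoking uniqueness of the elliptic solution shows that $\pi_h$ commutes with $\partial_t$. I also need each pointwise evaluation to be legitimate: $\pi_hu$, and hence $\partial_t^2\pi_hu$, lie in $U_h$ and are continuous, while $\partial_t^2u$ is continuous by hypothesis, so $\partial_t^2\epsilon_h=\partial_t^2u-\partial_t^2\pi_hu$ is continuous and the form $(\rho\partial_t^2\epsilon_h,w)_{\mathcal{Q}_h}$ is well defined. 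With these technical points settled, the identity holds for all $w\in U_h$ and almost every $t\in(0,T)$.
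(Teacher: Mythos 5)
Your proposal is correct and follows essentially the same route as the paper: establish the pointwise identity $\rho\partial_t^2u+Lu=f$ from the weak form, apply the nodal quadrature functional, use the defining relation $a(\pi_hu,w)=(Lu,w)_{\mathcal{Q}_h}$, and subtract the mass-lumped scheme; running the algebra from the left-hand side of (\ref{eq:hypP1}) rather than from the strong form is only a cosmetic difference. Your extra remarks on why $\partial_t^2\pi_hu$ exists and commutes with the projection address a technicality the paper leaves implicit, and do not change the argument.
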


\begin{proof}
Since $\rho$ is bounded and continuous, it follows that $\rho\partial_t^2u\in L^{2}(0,T;\mathcal{C}^0_0(\Omega))$. Since also $f\in L^2(0,T; \mathcal{C}^0(\overline\Omega))$, it follows that $Lu\in L^{2}(0,T; \mathcal{C}^0(\overline\Omega))$ and $\rho\partial_t^2u + Lu = f$. This implies
\begin{align*}
(\rho\partial_t^2u,w)_{\mathcal{Q}_h} + (Lu,w)_{\mathcal{Q}_h} &= (f,w)_{\mathcal{Q}_h}
\end{align*}
for all $w\in U_h$ and almost every $t\in(0,T)$. Using the definition of $\pi_hu$ we can then obtain
\begin{align*}
(\rho\partial_t^2u,w)_{\mathcal{Q}_h} + a(\pi_h u,w) &= (f,w)_{\mathcal{Q}_h}
\end{align*}
for all $w\in U_h$ and almost every $t\in(0,T)$. By definition of $u_h$ we have
\begin{align*}
(\rho\partial_t^2u_h,w)_{\mathcal{Q}_h} + a(u_h,w) &= (f,w)_{\mathcal{Q}_h}
\end{align*} 
for all $w\in U_h$ and almost every $t\in(0,T)$. Subtracting this from the previous equality and reordering the terms results in (\ref{eq:hypP1}).
\end{proof}

\begin{thm}[Optimal Convergence in the Energy-Norm]
\label{lthm:hypP2}
Let $u$ be the solution of (\ref{eq:hypPWF}) and let $u_h$ be the solution of (\ref{eq:MLFEM}), with $p\geq 2$ the degree of the finite element space. Let $\rho\in\mathcal{C}(\overline\Omega)$, $f\in L^{2}(0,T; \mathcal{C}^0(\overline\Omega))$, and let $c\in\mathcal{C}^{k+1}(\overline\Omega)$, $u,\partial_tu,\partial_t^2u \in L^{\infty}(0,T; H^{k+2}(\Omega))$ for some $k\geq 2$. Also, assume that regularity condition (\ref{eq:regA}) holds. If the reference quadrature rule satisfies (\ref{eq:quadC1}) and if all its weights are strictly positive, then
\begin{align}
\label{eq:hypP2}
&\qquad\|u-u_h\|_{\infty,1} +  \|\partial_tu-\partial_tu_h\|_{\infty,0} \leq  \\
& Ch^{\min(p,k)} \big( \|u\|_{\infty,\min(p+3,k+2)} + \|\partial_tu\|_{\infty,\min(p+3,k+2)} + T\|\partial_t^2u\|_{\infty,\min(p+3,k+2)} \big) \nonumber. 
\end{align}
\end{thm}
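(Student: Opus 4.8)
The plan is to split the error as $u-u_h=\epsilon_h+e_h$ with $\epsilon_h=u-\pi_hu$ and $e_h=\pi_hu-u_h\in U_h$, exactly as in Lemma~\ref{lem:hypP1}. The projection part $\epsilon_h$ is already controlled: Lemma~\ref{lem:interP3} bounds $\|\epsilon_h\|_1$ at rate $h^{\min(p,k)}$ and $\|\partial_t\epsilon_h\|_0$ at rate $h^{\min(p+1,k)}$, using that $\pi_h$ commutes with $\partial_t$ (justified below). After the triangle inequality, the coercivity of $a$, and Lemma~\ref{lem:MLnormP} to pass between $\|\cdot\|_1$, $\|\cdot\|_0$ and the discrete norms on $U_h$, everything reduces to bounding the discrete energy
\begin{align*}
E_h := \tfrac12(\rho\partial_te_h,\partial_te_h)_{\mathcal{Q}_h} + \tfrac12 a(e_h,e_h),
\end{align*}
since $\|e_h\|_1^2\le C\,a(e_h,e_h)\le CE_h$ and, because $\rho\ge\rho_0>0$ and the weights are positive, $\|\partial_te_h\|_0^2\le C|\partial_te_h|_{\mathcal{Q}_h}^2\le CE_h$.

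To estimate $E_h$, I would test the error equation~(\ref{eq:hypP1}) with $w=\partial_te_h\in U_h$. Because $\rho$ and the quadrature form are time-independent and $a$ is symmetric, the left-hand side equals $\partial_tE_h$, giving the identity
\begin{align*}
\partial_tE_h = -(\rho\partial_t^2\epsilon_h,\partial_te_h)_{\mathcal{Q}_h}.
\end{align*}
I would bound the right-hand side by the Cauchy--Schwarz inequality for the discrete form (a genuine inner product by strict positivity of the weights) together with the boundedness of $\rho$, obtaining $|\partial_tE_h|\le C|\partial_t^2\epsilon_h|_{\mathcal{Q}_h}\,|\partial_te_h|_{\mathcal{Q}_h}\le C|\partial_t^2\epsilon_h|_{\mathcal{Q}_h}\sqrt{E_h}$. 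Since $\pi_h$ is linear and time-independent, differentiating its defining relation $a(\pi_hu,w)=(Lu,w)_{\mathcal{Q}_h}$ twice in $t$ and using uniqueness (coercivity) shows $\partial_t^2\pi_hu=\pi_h\partial_t^2u$, so $\partial_t^2\epsilon_h=\partial_t^2u-\pi_h\partial_t^2u$ and Lemma~\ref{lem:interP3} yields $|\partial_t^2\epsilon_h|_{\mathcal{Q}_h}\le Ch^{\min(p+1,k)}\|\partial_t^2u\|_{\min(p+3,k+2)}$.

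The differential inequality $\partial_tE_h\le Cg(t)\sqrt{E_h}$ with $g(t):=h^{\min(p+1,k)}\|\partial_t^2u\|_{\min(p+3,k+2)}$ integrates, via the standard regularization $\tfrac{d}{dt}\sqrt{E_h+\delta}$ followed by $\delta\downarrow0$, to
\begin{align*}
\sqrt{E_h(t)}\le\sqrt{E_h(0)}+C\!\int_0^t\!g(s)\,ds\le\sqrt{E_h(0)}+CT\,h^{\min(p+1,k)}\|\partial_t^2u\|_{\infty,\min(p+3,k+2)}.
\end{align*}
It then remains to bound the initial energy. Using $u_h|_{t=0}=I_hu_0$, $\partial_tu_h|_{t=0}=I_hv_0$ with $u_0=u(0)$, $v_0=\partial_tu(0)$, I would write $e_h(0)=(\pi_hu_0-u_0)+(u_0-I_hu_0)$ and $\partial_te_h(0)=(\pi_hv_0-v_0)+(v_0-I_hv_0)$ and combine Lemma~\ref{lem:interP3}, Lemma~\ref{lem:interP1}, and Lemma~\ref{lem:MLnormP} to get $\sqrt{E_h(0)}\le Ch^{\min(p,k)}\big(\|u\|_{\infty,\min(p+3,k+2)}+\|\partial_tu\|_{\infty,\min(p+3,k+2)}\big)$, the $H^1$-part of $e_h(0)$ being dominant. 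Collecting all contributions and noting $h^{\min(p+1,k)}\le h^{\min(p,k)}$ for $h\le1$ gives~(\ref{eq:hypP2}).

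The main obstacle, and the only genuinely delicate point, is the passage from the differential energy inequality to the integrated bound with a constant independent of $h$ and $T$: the Gronwall-type step must be carried out uniformly, and one must verify that $t\mapsto E_h(t)$ is absolutely continuous so that the fundamental theorem of calculus applies, which relies on the temporal regularity $\partial_t^2u\in L^{\infty}(0,T;H^{k+2}(\Omega))$ and on the commutation $\partial_t\pi_h=\pi_h\partial_t$. A secondary technical check is that $\pi_hu$ is well defined, i.e.\ $Lu\in\mathcal{C}^0(\overline\Omega)$, which follows from $c\in\mathcal{C}^{k+1}(\overline\Omega)$, $u\in H^{k+2}(\Omega)$ with $k\ge2$, and the embedding $H^2(\Omega)\subset\mathcal{C}^0(\overline\Omega)$ in three dimensions. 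Everything else is bookkeeping with the already-established interpolation and norm-equivalence lemmas.
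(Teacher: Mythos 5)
Your proposal is correct and follows essentially the same route as the paper: the same splitting $u-u_h=\epsilon_h+e_h$, testing the error equation of Lemma \ref{lem:hypP1} with $w=\partial_te_h$ to get $\partial_tE_h=-(\rho\partial_t^2\epsilon_h,\partial_te_h)_{\mathcal{Q}_h}$, bounding the right-hand side via Lemma \ref{lem:interP3} and Lemma \ref{lem:MLnormP}, integrating in time, and estimating the initial energy with Lemmas \ref{lem:interP1} and \ref{lem:interP3}. The only cosmetic difference is that you integrate the differential inequality for $\sqrt{E_h+\delta}$ while the paper integrates the energy identity directly and takes a supremum over $T'$; your explicit justification of $\partial_t\pi_h=\pi_h\partial_t$ is a welcome extra detail the paper leaves implicit.
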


\begin{proof}
Define $e_h:=\pi_hu-u_h$ and $\epsilon_h:=u-\pi_hu$. From Lemma \ref{lem:hypP1}, it follows that
\begin{align}
\label{eq:hypP2a}
(\rho\partial_t^2e_h,w)_{\mathcal{Q}_h}+ a(e_h,w) &= -(\rho\partial_t^2\epsilon_h, w)_{\mathcal{Q}_h},
\end{align}
for all $w\in U_h$ and almost every $t\in(0,T)$. By substituting $w=\partial_te_h$ we can obtain
\begin{align}
\label{eq:hypP2b}
\partial_tE_h &= -(\rho\partial_t^2\epsilon_h, \partial_te_h)_{\mathcal{Q}_h},
\end{align}
for almost every $t\in(0,T)$, where $E_h:=\frac12(\rho \partial_te_h,\partial_te_h)_{\mathcal{Q}_h} + \frac12a(e_h,e_h)$ is the discrete energy. Fix $T'\in(0,T)$ and integrate (\ref{eq:hypP2b}) over $(0,T')$ to obtain
\begin{align}
\label{eq:hypP2c}
E_h|_{t=T'} = E_h|_{t=0} - \int_{0}^{T'} (\rho\partial_t^2\epsilon_h, \partial_te_h)_{\mathcal{Q}_h} \;dt .
\end{align}
Using the coercivity of $a$, the boundedness of $\rho$, and Lemma \ref{lem:MLnormP}, we can derive
\begin{align}
\label{eq:hypP2d}
\|e_h\|_{1} +  \|\partial_te_h\|_{0} &\leq CE_h^{1/2}, &&\text{a.e. }t\in(0,T).
\end{align}
From the Cauchy--Schwarz inequality, the bounds of $\rho$, Lemma \ref{lem:interP3}, and Lemma \ref{lem:MLnormP}, we can also obtain
\begin{align}
\label{eq:hypP2e}
&|(\rho\partial_t^2\epsilon_h, \partial_te_h)_{\mathcal{Q}_h}| \leq Ch^{\min(p+1,k)}\|\partial_t^2u\|_{\min(p+3,k+2)}\|\partial_te_h\|_0,
\end{align}
for almost every $t\in (0,T)$. Finally, we can use Lemma \ref{lem:interP1}, Lemma \ref{lem:interP3}, and the boundedness of $\rho$ and $a$ to obtain
\begin{align}
\label{eq:hypP2f}
& E_h^{1/2}|_{t=0} \leq Ch^{\min(p,k)}\big(\|u\|_{\infty,\min(p+3,k+2)} + \| \partial_tu\|_{\infty,\min(p+3,k+2)}\big).
\end{align}
By taking the supremum of (\ref{eq:hypP2c}) for all $T'\in(0,T)$ and using (\ref{eq:hypP2d}), (\ref{eq:hypP2e}), and (\ref{eq:hypP2f}), we can obtain
\begin{align}
\label{eq:hypP2h}
&\qquad \|e_h\|_{\infty,1} +  \|\partial_te_h\|_{\infty,0} \leq \\
&  Ch^{\min(p,k)} \big( \|u\|_{\infty,\min(p+3,k+2)} + \|\partial_tu\|_{\infty,\min(p+3,k+2)} + T\|\partial_t^2u\|_{\infty,\min(p+3,k+2)} \big) \nonumber.
\end{align}
Using (\ref{eq:hypP2h}) and Lemma \ref{lem:interP3} we obtain (\ref{eq:hypP2}).
\end{proof}


\begin{thm}[Optimal Convergence in the $L^2$-Norm]
\label{thm:hypP3}
Let $u$ be the solution of (\ref{eq:hypPWF}) and let $u_h$ be the solution of (\ref{eq:MLFEM}), with $p\geq 2$ the degree of the finite element space. Let $\rho\in\mathcal{C}(\overline\Omega)$, $f\in L^{2}(0,T; \mathcal{C}^0(\overline\Omega))$, $\partial_t^2u\in L^{2}(0,T; \mathcal{C}^0_0(\Omega))$, and let $c\in\mathcal{C}^{k+1}(\overline\Omega)$, $u,\partial_tu,\in L^{\infty}(0,T; H^{k+2}(\Omega))$ for some $k\geq 2$. Also, assume that regularity condition (\ref{eq:regA}) holds. If the reference quadrature rule satisfies (\ref{eq:quadC1}) and if all its weights are strictly positive, then 
\begin{align}
\label{eq:hypP3}
&\|u-u_h\|_{\infty,0} \leq Ch^{\min(p+1,k)} \big(\|u\|_{\infty,\min(p+3,k+2)}  + T\|\partial_tu\|_{\infty,\min(p+3,k+2)} \big).  
\end{align}
\end{thm}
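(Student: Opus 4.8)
The plan is to reduce the claim to an estimate on $e_h:=\pi_h u-u_h$ by splitting $u-u_h=(u-\pi_h u)+e_h=\epsilon_h+e_h$. The projection error is already controlled at the optimal order: Lemma \ref{lem:interP3} gives $\|\epsilon_h\|_{\infty,0}\le Ch^{\min(p+1,k)}\|u\|_{\infty,\min(p+3,k+2)}$. Hence it suffices to bound $\|e_h\|_{\infty,0}$ at the same order $h^{\min(p+1,k)}$, which is one power of $h$ better than the energy-norm estimate of Theorem \ref{lthm:hypP2}, so a plain energy argument (testing the error equation with $\partial_t e_h$) will not suffice.

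The main idea is a time-integration (Baker-type) argument applied to the error equation (\ref{eq:hypP1}), namely $(\rho\partial_t^2 e_h,w)_{\mathcal{Q}_h}+a(e_h,w)=-(\rho\partial_t^2\epsilon_h,w)_{\mathcal{Q}_h}$. Fixing $T'\in(0,T)$, I would test with the antiderivative $w(t):=\int_t^{T'}e_h(s)\,ds$, which lies in $U_h$ for each $t$, vanishes at $t=T'$, and satisfies $\partial_t w=-e_h$. Substituting this $w$ and integrating over $(0,T')$, I would integrate by parts in time in both the mass term and the right-hand side. The elliptic term becomes a perfect time derivative, $a(e_h,w)=-\tfrac12\partial_t a(w,w)$, which telescopes via $w(T')=0$ into the nonnegative quantity $\tfrac12 a(w(0),w(0))$, while the mass term yields $\tfrac12|e_h(T')|^2$ in the $\rho$-weighted discrete norm plus contributions at $t=0$. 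Crucially, integrating by parts on the right-hand side transfers one time derivative off $\epsilon_h$, so only $\partial_t\epsilon_h$ appears and $\partial_t^2\epsilon_h$ is eliminated; this is precisely why the hypotheses require regularity of $\partial_t u$ but not of $\partial_t^2 u$, and it is the mechanism producing the gain of one order.

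After these manipulations the identity has a left-hand side dominating $\tfrac12|e_h(T')|^2_{\mathcal{Q}_h}$ (the $a(w(0),w(0))$ term being nonnegative) and a right-hand side made of the initial contributions involving $e_h(0)$, $\partial_t e_h(0)$, $\partial_t\epsilon_h(0)$, together with $\int_0^{T'}(\rho\partial_t\epsilon_h,e_h)_{\mathcal{Q}_h}\,dt$. Here $e_h(0)=\pi_h u_0-I_h u_0$ and $\partial_t e_h(0)=\pi_h v_0-I_h v_0$ (using that $\pi_h$ commutes with $\partial_t$), both bounded at order $h^{\min(p+1,k)}$ by the triangle inequality with Lemmas \ref{lem:interP1} and \ref{lem:interP3}; the factor $\|\partial_t\epsilon_h\|_0$ is bounded at the same order by Lemma \ref{lem:interP3} applied to $\partial_t u$. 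Using Lemma \ref{lem:MLnormP} to pass between the discrete and $L^2$ norms, and $\|w(0)\|_0\le\int_0^{T'}\|e_h\|_0\,dt$, all remaining terms collapse into $Ch^{\min(p+1,k)}\|\partial_t u\|_{\infty,\min(p+3,k+2)}\int_0^{T'}\|e_h\|_0\,dt$.

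Finally, writing $\Phi(T'):=\sup_{t\le T'}\|e_h(t)\|_0$, applying the identity for every endpoint, taking the supremum, and using $\int_0^{T'}\|e_h\|_0\,dt\le T\Phi(T')$, I would arrive at a quadratic inequality of the form $\Phi^2\le Ch^{2\min(p+1,k)}\|u\|_{\infty,\min(p+3,k+2)}^2+Ch^{\min(p+1,k)}T\|\partial_t u\|_{\infty,\min(p+3,k+2)}\,\Phi$. Solving it gives $\Phi(T)\le Ch^{\min(p+1,k)}\big(\|u\|_{\infty,\min(p+3,k+2)}+T\|\partial_t u\|_{\infty,\min(p+3,k+2)}\big)$, and combining with the bound on $\|\epsilon_h\|_{\infty,0}$ yields (\ref{eq:hypP3}). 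The main obstacle is the bookkeeping of the integration-by-parts step: one must select the antiderivative test function with the correct endpoint so that the elliptic term telescopes cleanly and the second time derivative of $\epsilon_h$ is removed, ensuring that only the optimal-order projection errors $\|\partial_t\epsilon_h\|_0$ enter rather than the energy-norm errors that cost an order of $h$.
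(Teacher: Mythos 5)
Your proposal is correct and follows essentially the same route as the paper: the Baker-type argument with the antiderivative test function $w(t)=\int_t^{T'}e_h\,ds$, integration by parts in time to trade $\partial_t^2\epsilon_h$ for $\partial_t\epsilon_h$, and the resulting quadratic inequality for $\sup_t\|e_h\|_0$. The only cosmetic difference is that the paper disposes of the $t=0$ boundary term by noting $\partial_t(u-u_h)|_{t=0}$ vanishes at all quadrature nodes (since $\partial_tu_h|_{t=0}=I_hv_0$), so that contribution is exactly zero in the discrete inner product, whereas you estimate it; both work.
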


\begin{proof}
Define $e_h:=\pi_hu-u_h$ and $\epsilon_h:=u-\pi_hu$. From Lemma \ref{lem:hypP1}, it follows that
\begin{align}
\label{eq:hypP3a}
(\rho\partial_t^2e_h,w)_{\mathcal{Q}_h} + a(e_h,w) &= -(\rho\partial_t^2\epsilon_h, w)_{\mathcal{Q}_h},
\end{align}
for all $w\in U_h$ and almost every $t\in(0,T)$. Fix $T'\in(0,T)$ and choose $w$ as
\begin{align*}
w|_{t=t'}&:= \int_{t'}^{T'} e_h \;dt.
\end{align*}
This implies that $w|_{t=T'}=0$ and $\partial_tw=-e_h$. Using the relations
\begin{align*}
(\rho\partial_t^2e_h,w)_{\mathcal{Q}_h}  &= \partial_t(\rho\partial_te_h,w)_{\mathcal{Q}_h} + \frac12\partial_t(\rho e_h,e_h)_{\mathcal{Q}_h}, \\
a(e_h,w) &= -\frac12\partial_ta(w,w), \\
-(\rho\partial_t^2\epsilon_h, w)_{\mathcal{Q}_h} &= -\partial_t(\rho\partial_t\epsilon_h, w)_{\mathcal{Q}_h} - (\rho\partial_t\epsilon_h, e_h)_{\mathcal{Q}_h},
\end{align*}
we can rewrite (\ref{eq:hypP3a}) as
\begin{align}
\label{eq:hypP3b}
\frac12\partial_t(\rho e_h,e_h)_{\mathcal{Q}_h} &= \frac12\partial_ta(w,w) -\partial_t\big(\rho\partial_t(u-u_h),w\big)_{\mathcal{Q}_h} - (\rho\partial_t\epsilon_h,e_h)_{\mathcal{Q}_h},
\end{align} 
for almost every $t\in(0,T)$. Integrating (\ref{eq:hypP3b}) over $(0,T')$ and using the fact that $w|_{t=T'}=0$ and $\partial_t(u-u_h)|_{t=0,x\in\mathcal{Q}_h}=0$, results in
\begin{align}
\label{eq:hypP3c}
& \frac12(\rho e_h,e_h)_{\mathcal{Q}_h}|_{t=T'} = \frac12(\rho e_h,e_h)_{\mathcal{Q}_h}|_{t=0} -\frac12a(w,w)|_{t=0} - \int_{0}^{T'} (\rho\partial_t\epsilon_h,e_h)_{\mathcal{Q}_h}\;dt. 
\end{align}
From the boundedness of $\rho$ and Lemma \ref{lem:MLnormP} it follows that
\begin{align}
\label{eq:hypP3d}
\|e_h\|_0 &\leq C\|\rho^{1/2} e_h\|_{\mathcal{Q}_h}, &&\text{a.e. }t\in(0,T).
\end{align}
Because of the coercivity of $a$ we have
\begin{align}
\label{eq:hypP3e}
 -\frac12a(w,w)|_{t=0} &< 0.
\end{align}
From the Cauchy--Schwarz inequality, the bounds of $\rho$, Lemma \ref{lem:interP3}, and Lemma \ref{lem:MLnormP}, we can also obtain
\begin{align}
\label{eq:hypP3f}
&|(\rho\partial_t\epsilon_h, e_h)_{\mathcal{Q}_h}| \leq Ch^{\min(p+1,k)}\|\partial_tu\|_{\min(p+3,k+2)} \|e_h\|_0,
\end{align}
for almost every $t\in(0,T)$. Finally, we can use Lemma \ref{lem:interP1}, Lemma \ref{lem:interP3} and the boundedness of $\rho$ to obtain
\begin{align}
\label{eq:hypP3g}
& \|\rho^{1/2} e_h|_{t=0}\|_{\mathcal{Q}_h} \leq Ch^{\min(p+1,k)}\|u\|_{\infty, \min(p+3,k+2)} . 
\end{align}
By taking the supremum of (\ref{eq:hypP3c}) for all $T'\in(0,T)$ and using (\ref{eq:hypP3d}), (\ref{eq:hypP3e}), (\ref{eq:hypP3f}), and (\ref{eq:hypP3g}), we can obtain
\begin{align}
\label{eq:hypP3i}
&\|e_h\|_{\infty,0} \leq Ch^{\min(p+1,k)} \big(\|u\|_{\infty,\min(p+3,k+2)}  + T\|\partial_tu\|_{\infty,\min(p+3,k+2)} \big).
\end{align}
Using (\ref{eq:hypP3i}) and Lemma \ref{lem:interP3} we obtain (\ref{eq:hypP3}).
\end{proof}


\section{Several New Mass-Lumped Tetrahedral Elements of Degrees Two to Four}
\label{sec:MLelements}

In this section, we present several novel mass-lumped tetrahedral elements for degree $p=2,3,4$. The new degree-2 and degree-3 elements use 15 and 32 nodes per element, respectively, while the current elements for these degrees require 23 and 50 nodes, respectively \cite{mulder96, chin99}. We also introduce several degree-4 elements, requiring 60, 61, and 65 nodes. Mass-lumped tetrahedral elements of degree four had not been found yet.

\begin{table}[h]
\caption{Degree-2 mass-lumped tetrahedral element with 15 nodes.}
\label{tab:ML2n15}
\begin{center}
{\tabulinesep=1.0mm
\begin{tabu}{c r c c}
Nodes					& $n$ 	& $\omega$ 			& parameters  \\ \hline
$\{(0,0,0)\}	$				& $4$	& $\frac{17}{5040}$		& - \\
$\{(\frac12,\frac12,0)\}$		& $6$	& $\frac{2}{315}$		& - \\
$\{(\frac13,\frac13,0)\}$		& $4$	& $\frac{9}{560}$		& - \\
$\{(\frac14,\frac14,\frac14)\}$	& $1$	& $\frac{16}{315}$		& - \\ \hline
 \multicolumn{4}{c}{$U=\mathcal{P}_2\oplus\mathcal{B}_f\oplus\mathcal{B}_e = \{x_1,x_1x_2,\beta_f,\beta_e\}$}  \\ \hline
\end{tabu}}
\end{center}
\end{table}

\begin{table}[h]
\caption{Degree-3 mass-lumped tetrahedral element with 32 nodes.}
\label{tab:ML3n32}
\begin{center}
{\tabulinesep=1.0mm
\begin{tabu}{c r c c}
Nodes			& $n$ 	& $\omega$ 			& parameters  \\ \hline
$\{(0,0,0)\}	$		& $4$	& $\frac{41-9\sqrt{2}}{41160}$		& - \\
$\{(a,0,0)\}$		& $12$	& $\frac{8+9\sqrt{2}}{13720}$		& $\frac{3-\sqrt{3(\sqrt{2}-1)}}{6}$ \\
$\{(b,b,0)\}$		& $12$	& $\frac{10-\sqrt{2}}{1715}$		& $\frac{4-\sqrt{2}}{12}$ \\
$\{(c,c,c)\}$		& $4$	& $\frac{3}{140}$				& $\frac16$ \\ \hline
 \multicolumn{4}{c}{$U=\mathcal{P}_3\oplus\mathcal{B}_f\mathcal{P}_1\oplus\mathcal{B}_e\mathcal{P}_1=\{x_1,x_1^2x_2,\beta_fx_1,\beta_ex_1\}$}  \\ \hline
 \multicolumn{4}{c}{$U\otimes\mathcal{P}_1=\{x_1, x_1^2x_2, x_1^2x_2^2, \beta_fx_1,\beta_fx_1x_2, \beta_ex_1,\beta_ex_1x_2\}$}  \\ \hline
\end{tabu}}
\end{center}
\end{table}

\begin{table}[h]
\caption{Degree-4 mass-lumped tetrahedral element with 65 nodes.}
\label{tab:ML4n65}
\begin{center}
{\tabulinesep=1.0mm
\begin{tabu}{c r l l}
Nodes					& $n$ 	& $\omega$ 	& parameters  \\ \hline
$\{(0,0,0)\}	$				& $4$	& $0.0001216042545112321$		& - \\
$\{(a,0,0)\}$				& $12$	& $0.0004704124198744411$		& $0.1724919407749086$ \\
$\{(\frac12,0,0)\}$			& $6$	& $0.0001767065925083475$		& - \\
$\{(b_1,b_1,0)\}$			& $12$	& $0.001974748586596177$		& $0.1474177969013686$ \\
$\{(b_2,b_2,0)\}$			& $12$	& $0.001192465311769701$		& $0.4540395272271067$ \\
$\{(\frac13,\frac13,0)\}$		& $4$	& $0.001044697597634123$		& - \\
$\{(c_1,c_1,c_1)\}$			& $4$	& $0.008841425190569096$		& $0.1282209316290979$ \\ 
$\{(d,d,\frac12-d)\}$			& $6$	& $0.006891012924401557$		& $0.08742182088664353$ \\ 
$\{(c_2,c_2,c_2)\}$			& $4$	& $0.007499563520517103$		& $0.3124061452070811$ \\ 
$\{(\frac14,\frac14,\frac14)\}$	& $1$	& $0.01057967149339721$		& - \\ \hline
 \multicolumn{4}{c}{$U=\mathcal{P}_4 \oplus \mathcal{B}_f(\mathcal{P}_2\oplus\mathcal{B}_f) \oplus \mathcal{B}_e(\mathcal{P}_2\oplus\mathcal{B}_f\oplus\mathcal{B}_e)$}  \\
 \multicolumn{4}{c}{$=\{x_1,x_1^2x_2,x_1^2x_2^2,\beta_fx_1, \beta_fx_1x_2,\beta_f^2, \beta_ex_1,\beta_ex_1x_2,\beta_e\beta_f,\beta_e^2\} $}  \\ \hline
 \multicolumn{4}{c}{$U\otimes\mathcal{P}_2=\{x_1, x_1^2x_2, x_1^3x_2^2, x_1^3x_2^3, \beta_fx_1, \beta_fx_1^2x_2, \beta_fx_1^2x_2^2,\beta_f^2x_1,\beta_f^2x_1x_2, \dots$}  \\  
  \multicolumn{4}{c}{$\dots, \beta_ex_1,\beta_ex_1^2x_2,\beta_ex_1^2x_2^2, \beta_e\beta_fx_1,\beta_e\beta_fx_1x_2, \beta_e^2x_1,\beta_e^2x_1x_2\}$}  \\  \hline
\end{tabu}}
\end{center}
\end{table}

We present the mass-lumped tetrahedral elements using the reference tetrahedron with vertices at $(0,0,0)$, $(1,0,0)$, $(0,1,0)$, and $(0,0,1)$. In previous sections we used a tilde to denote coordinates and sets in the reference space, but since we only consider the reference space in this section, we will drop the tilde for readability. 

The nodes on the reference element are described using the notation $\{\vx\}$, which denotes the node $\vx$ and all equivalent nodes $s(\vx)$, with $s\in\mathcal{S}$. As shown in Lemma \ref{lem:Srep}, any $s\in\mathcal{S}$ can be represented by a permutation of the barycentric coordinates. In this case, the barycentric coordinates are given by the three Cartesian coordinates $x_1$, $x_2$, $x_3$, and the additional coordinate $x_4:=1-x_1-x_2-x_3$, so any $s\in\mathcal{S}$ can be written as $s(x_1,x_2,x_3)=(x_j,x_j,x_k)$, with $i,j,k\in\{1,2,3,4\}$, $i\neq j$, $i\neq k$, $j\neq k$. The barycentric coordinates of the node $\vx=(\frac15,\frac15,\frac15)$, for example, are therefore given by $(\frac15,\frac15,\frac15,\frac25)$, and the set of equivalent nodes $\{\vx\}$ consists of $(\frac15,\frac15,\frac15)$, $(\frac25,\frac15,\frac15)$, $(\frac15,\frac25,\frac15)$, and $(\frac15,\frac15,\frac25)$.

The reference function space, denoted by $U$, is the span of all nodal basis functions and is described in terms of $\{w\}$, which denotes the span of function $w$ and all its equivalent functions $w\circ s$, with $s\in\mathcal{S}$. For example, all equivalent functions of $w=x_1x_2$ are $x_1x_2$, $x_1x_3$, $x_1x_4$, $x_2x_3$, $x_2x_4$, and $x_3x_4$, so $\{w\}$ is the span of these six functions.

We assign the same weight to each equivalent node, so $\omega_{\vx}=\omega_{s(\vx)}$ for all $s\in\mathcal{S}$. From this and properties (\ref{eq:confC1}) and (\ref{eq:confC2}) it follows that if the quadrature rule is exact for a function $w$, then it is exact for all equivalent functions in $\{w\}$. If we can describe a function space in the form of $\{w_1,w_2,\dots,w_N\}$, by which we mean the span of $w_1,w_2,\dots,w_N$ and all their equivalent versions, this means the quadrature rule is exact when it is exact for the $N$ functions $w_1,w_2,\dots,w_N$. 

To give an example, the degree-3 element, given in Table \ref{tab:ML3n32}, consists of the nodes $(0,0,0)$, $(a,0,0)$, $(b,b,0)$, $(c,c,c)$, and all equivalent nodes, and the function space for this element is given by $U=\mathcal{P}_3\oplus\mathcal{B}_f\mathcal{P}_1\oplus\mathcal{B}_e\mathcal{P}_1$, where $\mathcal{B}_f:=\{\beta_f\}:=\{x_1x_2x_3\}$ are the face bubble functions and $\mathcal{B}_e:=\{\beta_e\}:=\{x_1x_2x_3x_4\}$ is the internal bubble function and where we used the notation $\mathcal{B}_f\mathcal{P}_k:=\mathcal{B}_f\otimes\mathcal{P}_k$, $\mathcal{B}_e\mathcal{P}_k:=\mathcal{B}_{e}\otimes\mathcal{P}_k$. The quadrature rule should be exact for all functions in $U\otimes\mathcal{P}_1$, which can be written as
\begin{align*}
U\otimes\mathcal{P}_1 = \{x_1, x_1^2x_2, x_1^2x_2^2, \beta_fx_1,\beta_fx_1x_2, \beta_ex_1,\beta_ex_1x_2\},
\end{align*}
so as the span of 7 independent functions and all their equivalents. This means the quadrature rule should be exact for these 7 functions. Since this quadrature rule also has 7 parameters, namely 4 weights and three position parameters $a,b,c$, this results in a system of 7 equations with 7 unknowns. Solving this system results in the parameters given in Table \ref{tab:ML3n32}. 

This approach has also been used to obtain the other elements presented in this paper. We have not yet found a systematic way to determine a suitable function space $U\supset\mathcal{P}_p$ with a suitable configuration of the nodes. Instead, we just tried multiple configurations and checked if the resulting weights are all positive and the resulting nodes all lie on the reference triangle.

The degree-2 element with 15 nodes, the degree-3 element with 32 nodes, and the degree-4 element with 65 nodes are given in Tables \ref{tab:ML2n15}, \ref{tab:ML3n32}, and \ref{tab:ML4n65}, respectively. In these tables, $n$ denotes the number of nodes in the given equivalence class. Variants of the degree-4 element, requiring only 60 and 61 nodes, are given in Section \ref{sec:varML4}.

In the next sections we test these new mass-lumped elements and compare them with the current mass-lumped elements and several discontinuous Galerkin approximations.

\section{Dispersion Analysis}
\label{sec:dispersion}

In this section we analyze the dispersion properties of the mass-lumped elements. The dispersion error is measured by the difference between the propagation speed of physical and numerical waves and is one of the main criteria to judge the quality of the finite elements for wave propagation modelling. We will use it to obtain an indication of the required mesh resolution for a given accuracy, and to compare different finite element methods in terms of accuracy and numerical cost.

For the analysis we will follow the same procedure as in \cite{geevers18a}. We consider a homogeneous medium with $\rho, c=1$ and consider physical plane waves of the form 
\begin{align*}
u=e^{\im(\vkappa\cdot\vx-\omega t)},
\end{align*}
where $\im:=\sqrt{-1}$ is the imaginary number, $\vkappa$ is the wave vector, and $\omega$ is the angular velocity. Since $\rho,c=1$ we have a wave propagation speed $c_P=1$. For a given wave vector $\vkappa$ we compute all corresponding numerical plane waves and determine the numerical wave with a propagation speed $c_{P,h}=\omega_h/|\vkappa|$ closest to the physical wave velocity. The dispersion error is defined as the relative difference $(c_P-c_{P,h})/c_P$. We then find the worst case among all possible wave directions for a fixed wave length $\lambda=2\pi/|\vkappa|$. We determine the dispersion error for different wavelengths and extrapolate the results to obtain a relation between the dispersion error and number of elements per wave length.

\begin{figure}[h]
\centering
\begin{subfigure}[b]{0.45\textwidth}
  \includegraphics[width=\textwidth]{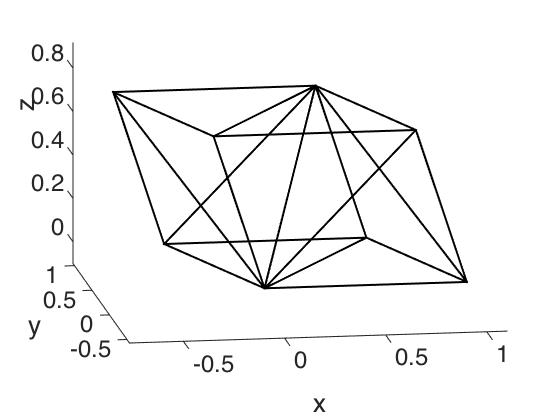}
\end{subfigure}
\begin{subfigure}[b]{0.45\textwidth}
  \includegraphics[width=\textwidth]{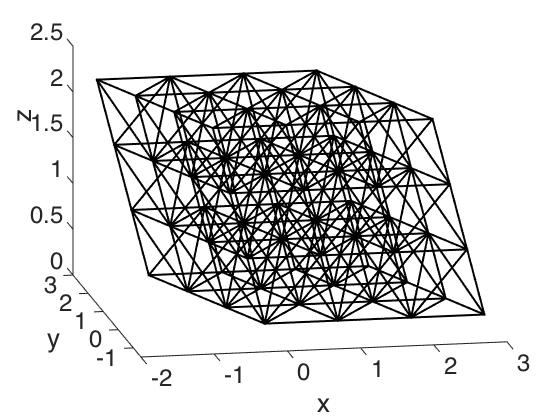}
\end{subfigure}
\caption{Single parallelepiped cell packed with tetrahedra (left), and a repeated pattern of these cells (right).}
\label{fig:tetra0}
\end{figure}

To obtain the numerical plane waves we construct a periodic tetrahedral mesh by packing a single parallelepiped cell with tetrahedra, and then repeating this pattern to fill the entire 3D-space. An illustration of such a mesh is given in Figure \ref{fig:tetra0}. Such a periodic mesh enables us to compute the numerical plane waves using Fourier modes and by solving an eigenvalue problem related to a single cell. 

To do this, let $\Omega_0$ be the parallelepiped cell at the origin. We can write $\Omega_0:=\ten{T}\cdot[0,1)^3=\{y\;|\;y=\ten{T}\cdot\vx, \text{for some }\vx\in[0,1)^3\}$, with $\ten{T}\in\mathbb{R}^{3\times 3}$ the second-order tensor whose columns are the vectors of the edges of $\Omega_0$ connected to the origin. Let $\{\vx^{(\Omega_0,i)}\}_{i=1}^{n_0}$ be the set of nodes on ${\Omega}_0$. For each $\vct{k}\in\mathbb{Z}^3$, we define the translated cell $\Omega_{\vct{k}}:=\ten{T}\cdot\vct{k}+\Omega_0$, and let $\{\vx^{(\Omega_{\vct{k}},i)}\}_{i=1}^{n_0}$ be the corresponding translated nodes. Then, for each node $\vx^{(\Omega_{\vct{k}},i)}$, we define $w^{(\Omega_{\vct{k}},i)}$ to be the corresponding nodal basis function. We can then define the following submatrices:
\begin{align*}
M^{(\Omega_0)}_{ij} &:= \left(\rho w^{(\Omega_{\vct{0}},i)},w^{(\Omega_{\vct{0}},j)}\right)_{\mathcal{Q}_h}, &&i,j=1,\dots,n_0, \\
A^{(\Omega_{0},\Omega_{\vct{k}})}_{ij} &:= a\left(w^{(\Omega_{\vct{0}},i)},w^{(\Omega_{\vct{k}},j)}\right), &&\vct{k}\in\{-1,0,1\}^3, \,i,j=1,\dots,n_0.
\end{align*}
For each wave vector $\vkappa$ we then define the matrix
\begin{align*}
S^{(\vkappa)} &:= M_{inv}^{(\Omega_0)}\left(\sum_{\vct{k}\in\{-1,0,1\}^3} e^{\im(\vkappa\cdot\ten{T}\cdot\vct{k})}A^{(\Omega_0,\Omega_\vct{k})}\right)
\end{align*}
where $M_{inv}^{(\Omega_0)}$ denotes the inverse of $M^{(\Omega_0)}$. For an order-$2K$ Dablain scheme, with time step size $\Delta t$, the angular frequencies of the numerical plane waves $\{\omega_{h}^{(\vkappa,i)}\}_{i=1}^{n_0}$ are given by
\begin{align*}
\omega_{h}^{(\vkappa,i)} &= \pm\frac{1}{\Delta t}\arccos\left(\sum_{k=0}^K\frac{1}{(2k)!}(-\Delta t^2s_h^{(\vkappa,i)})^k\right) ,
\end{align*}
where $\{s_h^{(\vkappa,i)}\}_{i=1}^{n_0}$ are the eigenvalues of $\sigma(S^{(\vkappa)})$ \cite{geevers18a}. The numerical wave propagation speed is given by $c_{P,h}^{(\vkappa,i)}=|\omega_{h}^{(\vkappa,i)}|/|\vkappa|$. The dispersion error, for a given wavelength $\lambda$, is then given by
\begin{align*}
e_{disp}(\lambda) := \sup_{\vkappa\in\mathbb{R}^3, |\vkappa|=2\pi/\lambda} \left( \inf_{i=1,\dots,n_0} \frac{|c_{P,h}^{(\vkappa,i)}-c_P|}{c_P} \right).
\end{align*}

For our dispersion analysis, we will consider a congruent, nearly-regular, equifacial mesh, known as the tetragonal disphenoid honeycomb. This mesh can be obtained by a repeated pattern of cells, where a single cell can be obtained by slicing the unit cube into six tetrahedra with the planes $x_1=x_2$, $x_1=x_3$, and $x_2=x_3$, and then applying the transformation $\vx\rightarrow \ten{T}\cdot\vx$, with 
\begin{align*}
\ten{T} := \begin{bmatrix}
1 & -1/3 & -1/3 \\
0 & \sqrt{8/9} & -\sqrt{2/9} \\
0 & 0 & \sqrt{2/3}
\end{bmatrix}.
\end{align*} 

We will analyze the relation between the dispersion error and the number of elements per wavelength $N_E:=(\lambda^3/|e|_{av})^{1/3}$, where $|e|_{av}=2\sqrt{3}/27$ denotes the average element volume. We will also look at the following quantities:
\begin{itemize}
  \item $n_{vec}=n_0\frac{\lambda^3}{|\Omega_0|}$: the number of degrees of freedom per $\lambda^3$-volume. Here $|\Omega_0|=4\sqrt{3}/9$ denotes the volume of $\Omega_0$.
  \item $n_{mat}=\frac{\lambda^3}{|\Omega_0|}\sum_{q\in\mathcal{Q}_{\Omega_0}} |\mathcal{N}(q)|$: the number of non-zero entries of the stiffness matrix per $\lambda^3$-volume. Here, $\mathcal{Q}_{\Omega_0}$ denotes the nodes on $\Omega_0$ and $|\mathcal{N}(q)|$ denotes the number of nodes connected with $q$ through an element.
  \item $N_{\Delta t}=T_0/\Delta t$: the number of time steps during one oscillation in time. Here $T_0=\lambda/c_P$ denotes the duration of one oscillation and $\Delta t=\sqrt{c_K/s_{h,max}}$ is the largest allowed time step size for the order-$2K$ Dablain scheme, with $c_K$ a constant depending on the order of the time integration scheme ($c_K=4,12,7.57,21.48$ for $K=1,2,3,4$, respectively) and 
  \begin{align*}
  s_{h,max} := \sup_{\vct{k}\in\mathcal{K}_0} \max_{i=1,\dots,n_0}s^{(\vkappa,i)}_{h}
  \end{align*}
the largest possible eigenvalue $s_h$, with $\mathcal{K}_0=\ten{T}^{-t}\cdot[0,2\pi)$ the space of distinct wave vectors. 
  \item $n_{comp}=n_{mat}KN_{\Delta t}$: the estimated number of computations per $\lambda^3$-volume during one time oscillation, with $K$ the number of stages of the order-$2K$ Dablain scheme.
\end{itemize}
Details on the dispersion analysis and how the quantities listed above are computed can be found in \cite{geevers18a}. 

\begin{figure}[h]
\centering
\includegraphics[width=0.7\textwidth]{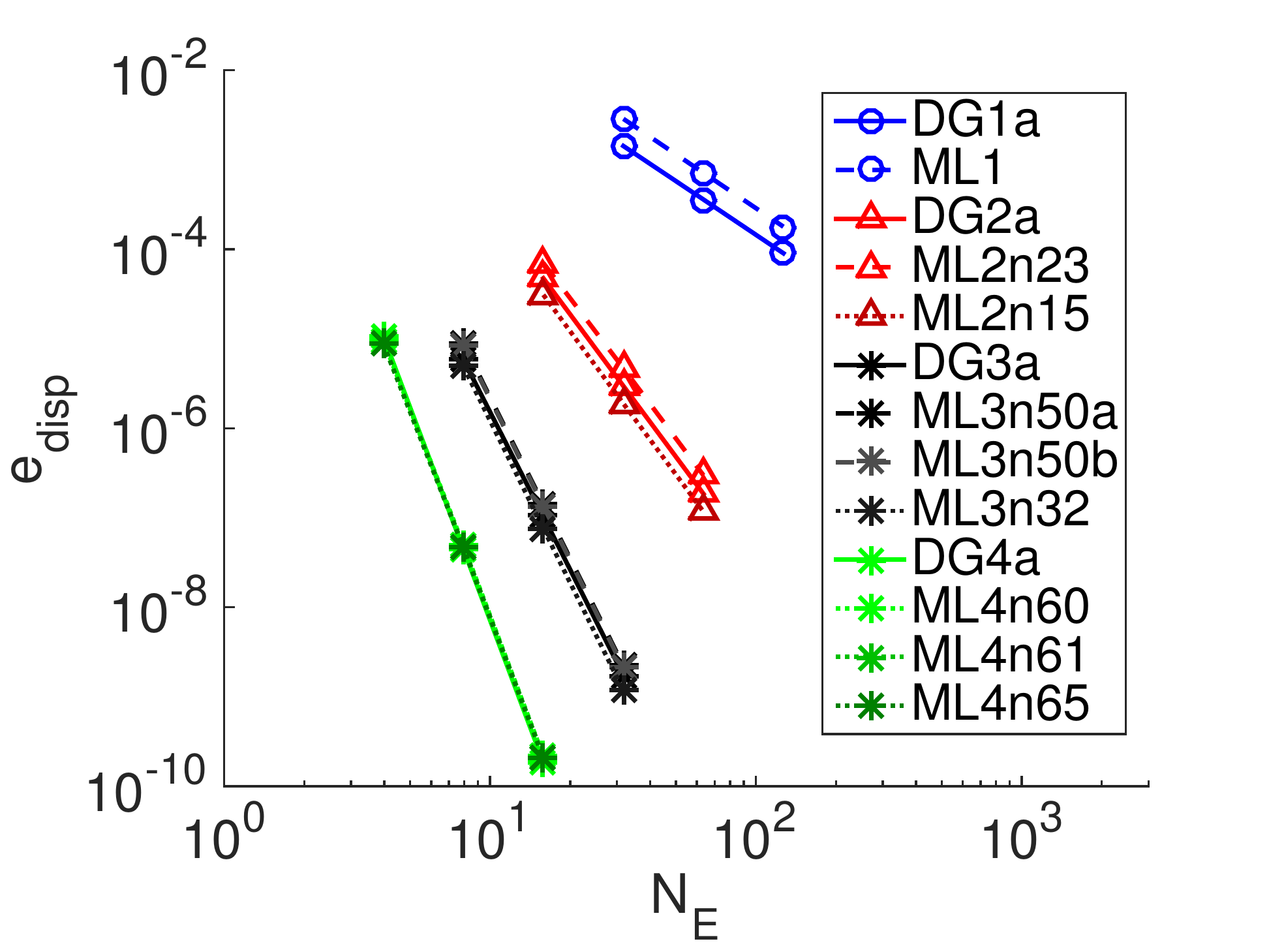}
\caption{Relation between the dispersion error and number of elements per wavelength for different mass-lumped finite element methods. The graphs of ML3n50a and ML3n50b, and the graphs of the degree-four methods are almost identical.}
\label{fig:errDisptAc}
\end{figure}

\begin{table}[h]
\caption{Approximation of the dispersion error. The new mass-lumped methods are marked in bold.}
\label{tab:errDisp}
\begin{center}
\begin{tabular}{c|c||c|c}
Method			& $e_{disp}$ 				& Method			& $e_{disp}$ \\ \hline\hline
DG1				& $1.45(n_{E})^{-2}$ 	& DG2 			& $3.00(n_{E})^{-4}$ \\
ML1				& $2.87(n_{E})^{-2}$ 	& ML2n23			& $4.82(n_{E})^{-4}$ \\ 
  				& 					 & \textbf{ML2n15} 	& $1.89(n_{E})^{-4}$ \\ \hline
DG3 				& $1.77(n_{E})^{-6}$ 	& DG4 			& $0.739(n_{E})^{-8}$\\ 
ML3n50a 			& $2.25(n_{E})^{-6}$ 	& \textbf{ML4n60}  	& $0.865(n_{E})^{-8}$ \\ 
ML3n50b 			& $2.15(n_{E})^{-6}$ 	& \textbf{ML4n61}  	& $0.854(n_{E})^{-8}$ \\
\textbf{ML3n32}  	& $1.19(n_{E})^{-6}$ 	& \textbf{ML4n65}  	& $0.825(n_{E})^{-8}$ \\
\end{tabular}
\end{center}
\end{table}

\begin{figure}[h]
\centering
\includegraphics[width=0.7\textwidth]{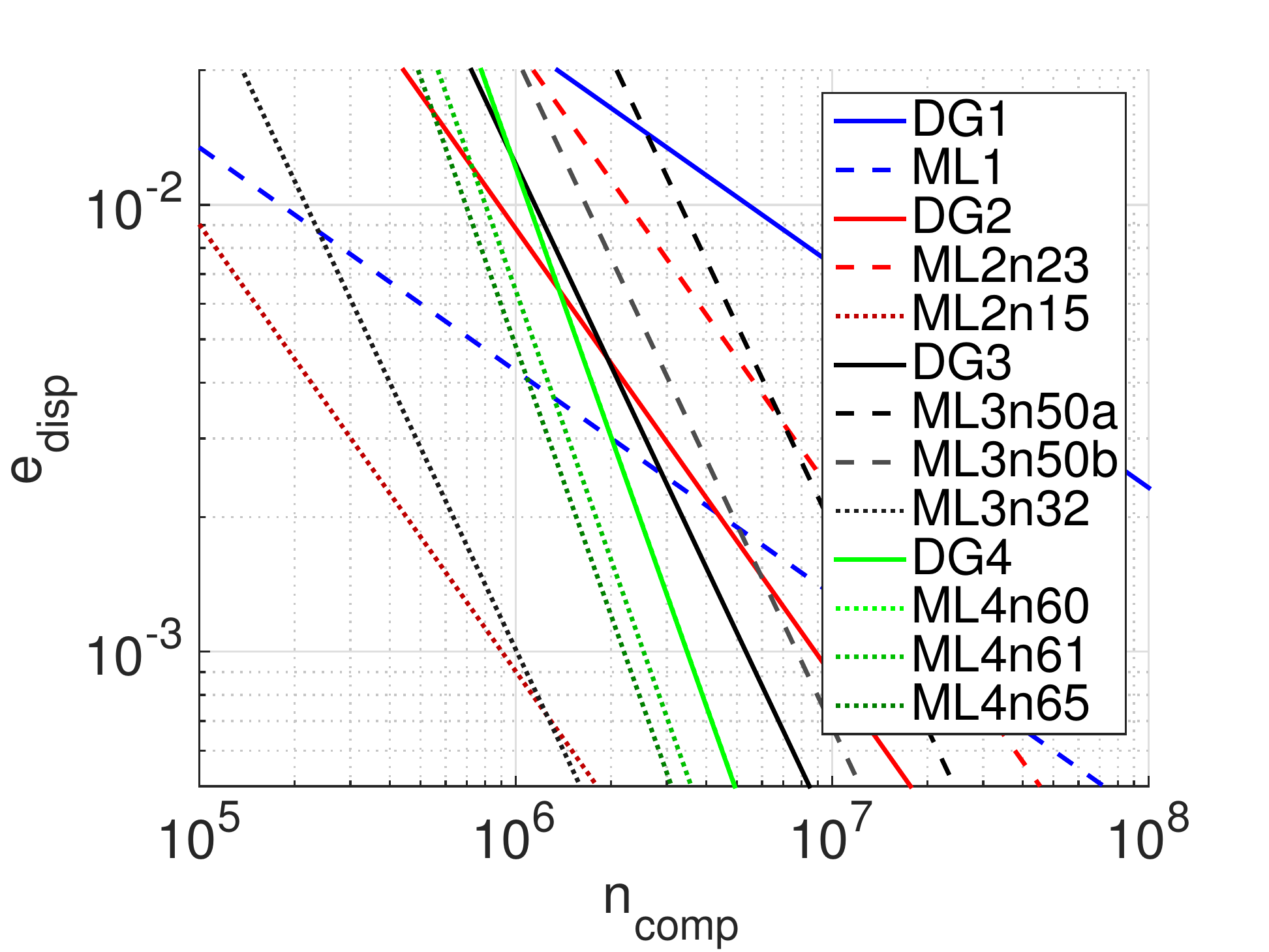}
\caption{Relation between the dispersion error and estimated computational cost for different finite element methods. The new mass-lumped methods are illustrated with dotted lines. The graphs of DG4 and ML4n60 are almost identical.}
\label{fig:errDisptAcComp}
\end{figure}

\begin{table}[h]
\begin{center}
\begin{tabular}{l|| r|r|r|r|r}
  & \multicolumn{5}{c}{$e_{disp}=0.001$}  \\ \hline
Method		& $N_E$ 	& $n_{vec}$ & $n_{mat}$ 		& $N_{\Delta t}$ & $n_{comp}$  	\\ \hline\hline
DG1	 		& $38.0$  		& $220000$	& $4400\times 10^3$& $120$ 	& $540.00\times 10^6$ 	  \\
ML1 			& $54.0$  		& $26000$	& $390\times 10^3$ 	& $47$ 	& $18.00\times 10^6$  	 \\ \hline
DG2  		& $7.4$ 		& $4100$  	& $200\times 10^3$ 	& $22$ 	& $8.80\times 10^6$ 	 \\
ML2n23  		& $8.3$ 		& $4800$ 		& $220\times 10^3$ 	& $52$ 	& $23.00\times 10^6$ 	\\ 
\textbf{ML2n15}	& $6.6$		& $1200$ 		& $39\times 10^3$ 	& $11$ 	& $0.90\times 10^6$   	\\ \hline
DG3  		& $3.5$ 		& $840$ 		& $84\times 10^3$ 	& $21$ 	& $5.30\times 10^6$ 	 \\
ML3n50a 		& $3.6$ 		& $1200$ 		& $98\times 10^3$ 	& $52$ 	& $15.00\times 10^6$ 	 \\
ML3n50b 		& $3.6$ 		& $1100$ 		& $96\times 10^3$ 	& $27$ 	& $7.70\times 10^6$ 	 \\
\textbf{ML3n32}& $3.2$		& $430$ 		& $26\times 10^3$ 	& $13$ 	& $1.00\times 10^6$ 	 \\ \hline
DG4  	  	& $2.3$ 		& $420$ 		& $73\times 10^3$ 	& $12$ 	& $3.50\times 10^6$ 	 \\
\textbf{ML4n60}& $2.3$ 		& $370$ 		& $38\times 10^3$ 	& $23$ 	& $3.50\times 10^6$ 	 \\
\textbf{ML4n61}& $2.3$		& $390$ 		& $39\times 10^3$ 	& $16$ 	& $2.50\times 10^6$ 	\\
\textbf{ML4n65}& $2.3$		& $410$ 		& $44\times 10^3$ 	& $13$ 	& $2.20\times 10^6$ 	 \\
\end{tabular}
\end{center}
\caption{Number of elements per wavelength $N_{E}$, number of degrees of freedom $n_{vec}$, size of the global matrix $n_{mat}$, number of time steps $N_{\Delta t}$, and computational cost $n_{comp}$ for a dispersion error of $0.001$ for different finite element methods for the scalar wave equation. The new mass-lumped methods are marked in bold. The numbers are accurate up to two decimal places. }
\label{tab:errDispAcComp}
\end{table}

We will refer to the standard linear mass-lumped finite element method as ML1. The higher-order mass-lumped methods will be referred to as ML[$p$]n[$n$], where $p$ is the degree and $n$ the number of nodes per element. In particular, the degree-2 method of \cite{mulder96} will be referred to as ML2n23 and the two versions of the degree-3 method in \cite{chin99} will be referred to as ML3n50a and ML3n50b. The mass-lumped methods introduced in this paper will be referred to as ML2n15, ML3n32, ML4n60, ML4n61, and ML4n65.

We will also compare the mass-lumped methods with the symmetric interior penalty discontinuous Galerkin (SIPDG) method, introduced and analyzed in [Grote et al. 2006]. For the penalty term, we use the lower bound derived in \cite{geevers17}, since it was shown in \cite{geevers18a} that this penalty term results in a significantly more efficient scheme than the penalty terms based on the more commonly used trace inequality of \cite{warburton03}. The quantities for the computational cost are computed in the same way as for the mass-lumped method, but now $n_0$ denotes the number of basis functions in $\Omega_0$ and $n_{mat}$ is computed as $n_{mat}=\frac{\lambda^3}{|\Omega_0|} \sum_{e\in\mathcal{T}_{\Omega_0}} |U_e|^2|\mathcal{N}(e)|$, where $\mathcal{T}_{\Omega_0}$ are the elements in $\Omega_0$, $|U_e|$ are the number of basis functions per element, and $|\mathcal{N}(e)|=5$ are the number of elements connected with $e$ through a face, including $e$ itself. We will refer to the SIPDG methods of degree 1, 2, 3, and 4, as DG1, DG2, DG3, and DG4, respectively. 

For the time integration, we combine each degree-$p$ finite element method with an order-$2p$ Dablain time integration scheme, since this results in order-$2p$ convergence of the dispersion error. 
 
Figure \ref{fig:errDisptAc} illustrates the relation between the dispersion error and number of elements per wavelength. The dispersion error of the finite element methods converge with order $2p$, which is typical for symmetric finite element methods for eigenvalue approximations, see, for example, \cite{boffi10} and the references therein. Using extrapolation, we obtain formulas for the dispersion error, given in Table \ref{tab:errDisp}. These formulas can be used to determine the required resolution of the mesh given the wavelength and desired accuracy. From the leading constants we can see that the new mass-lumped methods of degree 2 and 3 are more accurate for the same mesh resolution than the SIPDG and existing mass-lumped methods of these orders. The degree-4 mass-lumped method with 65 nodes is slightly more accurate than the versions using 60 and 61 nodes, but is slightly less accurate than the degree-4 discontinuous Galerkin method.

While some methods are more accurate for the same mesh resolution, this does not necessarily mean that these methods are more efficient, since the computational cost per element can greatly differ per method. To get an idea which method is most efficient for a given accuracy, we also look at the relation between the dispersion error and the estimated computational cost. This relation is illustrated in Figure \ref{fig:errDisptAcComp}. The required computational cost of each method for a dispersion error of $0.001$ is also illustrated in Table \ref{tab:errDispAcComp}. 

These results show that the new degree-2 mass-lumped method significantly outperforms the other degree-2 finite element methods, reducing the required computational cost for a given accuracy by one order of magnitude. The new degree-3 method is also significantly more efficient than the other degree-3 methods, reducing the required computational cost by more than a factor 5. These reductions in computational cost can be explained by the improved accuracy for the same mesh resolution, a reduction in the number of degrees of freedom and therefore reduction of the size of the stiffness matrix, and by a smaller number of time steps due to a larger allowed time step size. 

Among the degree-4 finite element methods, the mass-lumped method using 65 nodes performs best, mainly due to a smaller number of required time steps, although these differences are relatively small.

Figure \ref{fig:errDisptAcComp} also indicates that for a dispersion error between $1\%$ and $0.1\%$, the new degree-2 mass lumped method performs best, while for smaller dispersion errors, the new degree-3 mass lumped method is most efficient. When we extrapolate the graphs, we find that the degree-4 mass-lumped method using 65 nodes will only outperform the degree-3 method for a dispersion error below $10^{-5}$.

While the dispersion analysis provides useful information on the efficiency of the numerical methods, it does not include the effect of interpolation errors or inaccurate higher-frequency modes that may contaminate the numerical solution. Furthermore, the estimated computational cost is no perfect measure for the computation time, since the real computation time heavily depends on the implementation of the algorithm and the hardware that is used. In the next section we therefore also show the results of several numerical tests for the mass-lumped methods.

\section{Numerical tests}
\label{sec:tests}

\subsection{Homogeneous Domain}

We first tested and compared the old and new mass-lumped tetrahedral element methods on a homogeneous acoustic model using unstructured tetrahedral meshes. The domain is $[-2,2]\times[-1,1]\times[0,2]\,$km$^3$ and the acoustic wave propagation speed is $c_P:=2\,$km/s. A $3.5$-Hz Ricker wavelet, starting from the peak, was placed at  $\vx_{src}:=(0,0,1000)\,$m, and 56 receivers were placed on a line between $x_r=-1375$ and $x_r=+1375\,$m with a $50$-m interval at $y_r=0\,$m and $z_r=800\,$m. Data were recorded for 0.6$\,$s, counting from the time at which the wavelet peaked, but the computations already started at the negative time -0.6$\,$s when the wavelet is approximately zero.

\begin{figure}[h]
\centering
\begin{subfigure}[b]{0.45\textwidth}
  \includegraphics[width=\textwidth]{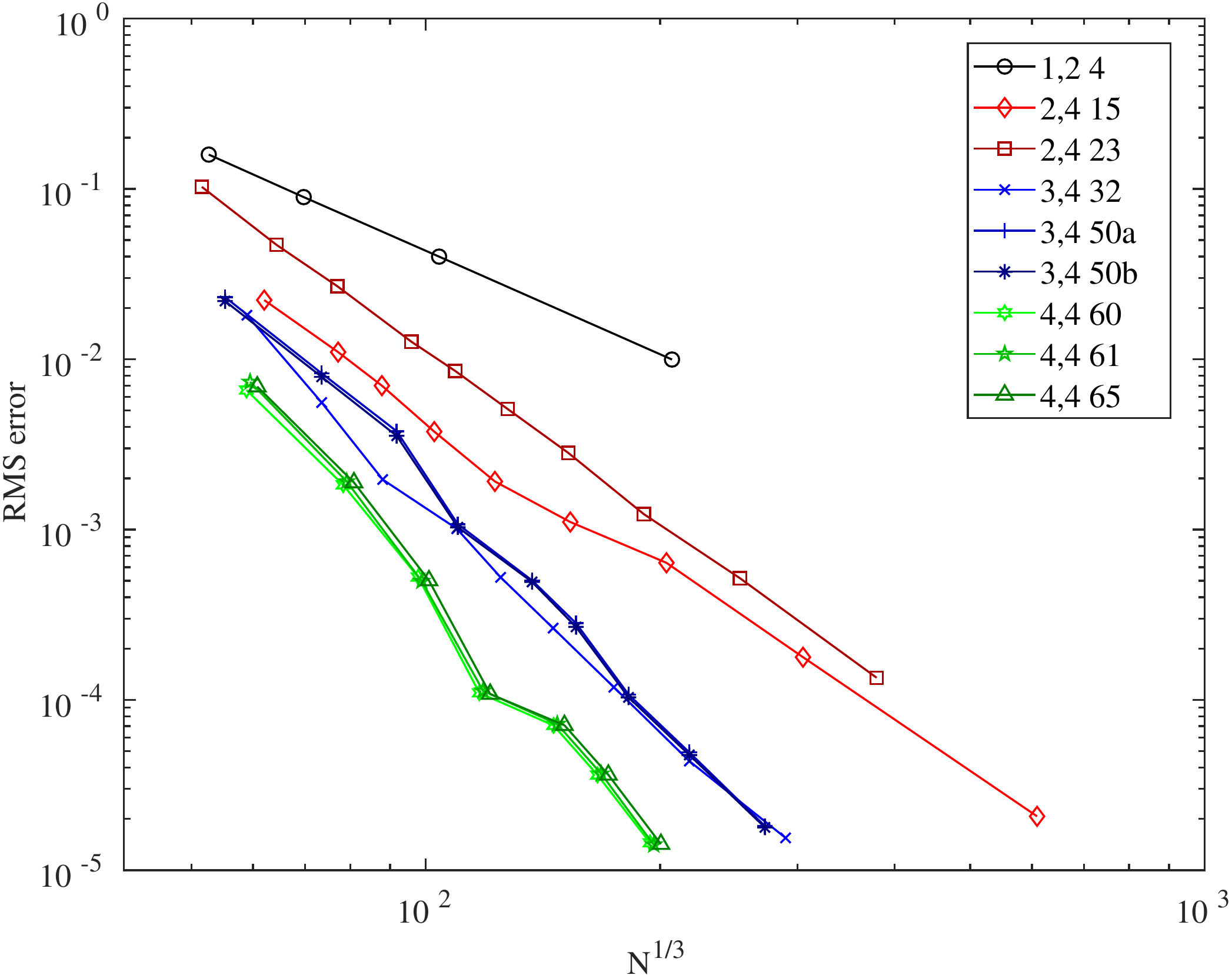}
\end{subfigure} \,\,
\begin{subfigure}[b]{0.45\textwidth}
  \includegraphics[width=\textwidth]{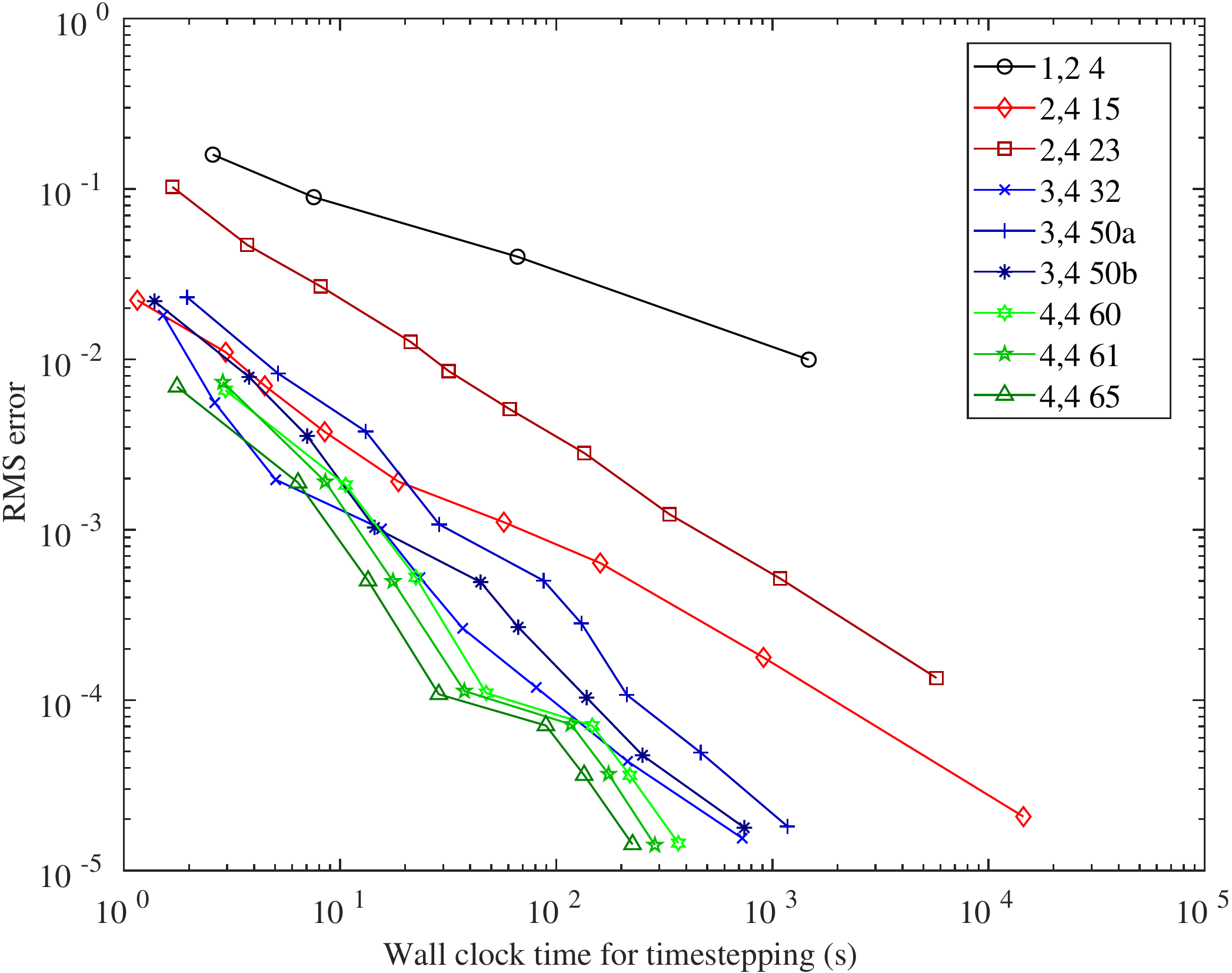}
\end{subfigure}
\caption{RMS errors as a function of the cube root of the number of degrees of freedom (left) and as a function of the wall clock time (right).
In the legend, $p,K\;n$ refers to the element of degree $p$ with $n$ nodes, combined with a $K$-order time-stepping scheme. The older elements, apart from the one with degree 1, have degree 2 with 23 nodes and degree 3 with 50 nodes and two variants called a and b.}
\label{fig:rmsa}
\end{figure}

\begin{table}[h]
\caption{Linear fits of the left graph of Figure \ref{fig:rmsa}.}
\label{tab:errFit}
\begin{center}
\begin{tabular}{l |c ||l |c}
Method			& RMS error 			& Method			& RMS error \\ \hline\hline
1,2 4	 			& $(4.9\times10^2 )N^{(-1/3 \times 2.0)}$ 	& 2,4 15	& $(4.4\times 10^3) N^{(-1/3 \times 3.0)}$ \\ 
  				& 								& 2,4 23 	& $(4.9\times 10^4) N^{(-1/3 \times 3.3)}$ \\ \hline
3,4 32 			& $(9.2\times 10^5) N^{(-1/3 \times 4.4)}$ 	& 4,4 60  	& $(8.6\times 10^6) N^{(-1/3 \times 5.1)}$ \\ 
3,4 50a 			& $(2.9\times 10^6) N^{(-1/3 \times 4.6)}$ 	& 4,4 61  	& $(1.3\times 10^7) N^{(-1/3 \times 5.2)}$ \\
3,4 50b		 	& $(2.6\times 10^6) N^{(-1/3 \times 4.6)}$ 	& 4,4 65  	& $(1.2\times 10^7) N^{(-1/3 \times 5.2)}$ \\
\end{tabular}
\end{center}
\end{table}

The exact solution, in case of an unbounded domain, is given by
\begin{align*}
u(\vx,t) &= \frac{w(t-r/c_P)}{4\pi r}, && t\leq 0.6\,\mathrm{s},
\end{align*}
where $r:=|\vx-\vx_{src}|$ is the distance to the source and $w(t):= (1-2\pi^2f^2t^2)e^{-\pi^2f^2t^2}$ the Ricker-wavelet of peak frequency $f=3.5\,$Hz. This wavelet is zero up to machine precision for $t\leq-0.6\,$s. For the bounded domain, on which we imposed zero Neumann boundary conditions, we add mirror sources to handle the reflections caused by the boundary conditions.

For the implementation of the mass-lumped methods we used the algorithm described in \cite{mulder16}. The time step size is based on the estimates of \cite{mulder14} multiplied by a factor 0.9. The simulations were carried out with OpenMP on 24 cores of two Intel\textsuperscript{\textregistered}~Xeon\textsuperscript{\textregistered} E5-2680~v3 CPUs running at 2.50$\,$GHz. Figure \ref{fig:rmsa} shows the observed root mean square (RMS) errors of the receiver data for the various schemes against the number of degrees of freedom $N$ and against wall clock time. The latter should not be taken too literal because it depends on code implementation, compiler and hardware, and even varies between runs. It can be further reduced by going to single precision, but then it becomes more difficult to measure the errors when they become small. Therefore, we ran a double-precision version of the code when preparing these figures.

Fourth-order time stepping was used for degrees higher than one \cite{dablain86}. For degree 4, we also considered 6th-order time stepping, but the errors were nearly the same as with 4th-order time stepping for the current example.

Power-law fits, given in Table \ref{tab:errFit}, show that the RMS errors converge with approximately order $p+1$ and confirm that the new elements maintain an optimal order of convergence. Figure \ref{fig:rmsa} also shows that the new mass-lumped methods require less degrees of freedom and computation time for the same accuracy. For the degree-2 methods, the difference in wall clock time is up to one order of magnitude, while for the degree-3 methods this difference is up to a factor 2. The degree-4 methods become more efficient for errors below $10^{-3}$.

\subsection{Elastic Salt Model}

We also tested the methods on the more realistic salt model from \cite{kononov12}, made elastic by replacing the water layer at the top by rock. A 3-Hz Ricker wavelet vertical force source was placed on the surface at (2000,2200,0)$\,$m and 25 receivers were placed on a line between $x_r=-1012.5$ and $x_r=7887.5\,$m with a $25$-m interval at $y_r=2200\,$m and $z_r=0\,$m. An illustration of this salt model is given in Figure \ref{fig:saltModel}. Figure \ref{fig:saltSnaps} displays vertical cross sections through the 3D vertical displacement wavefield of the 65-node degree-4 method, clipped at 25\% of its maximum amplitude with red for positive and blue for negative values. Small amplitudes were replaced by the P-velocity to give an impression of the model. Figure \ref{fig:seismogram} also shows seismograms of this method for the displacement in the $x$- and $z$-direction clipped at 2\% of the maximum amplitude. 

\begin{figure}[h]
\centering
\includegraphics[width=0.5\textwidth]{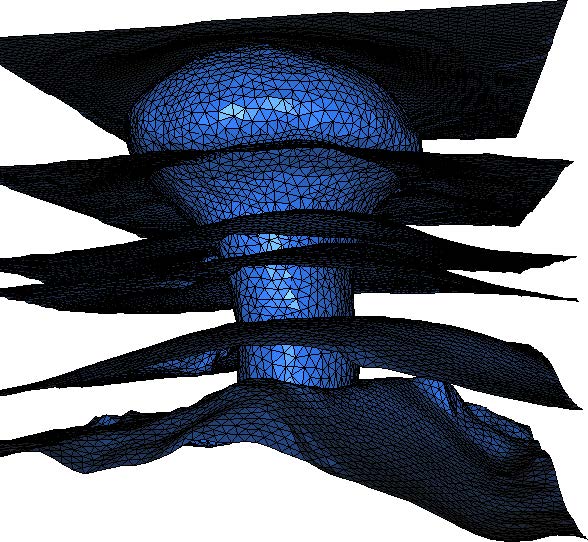}
\caption{Interfaces of the salt model taken from \cite{kononov12}. }
\label{fig:saltModel}
\end{figure}

\begin{figure}[h]
\centering
\begin{subfigure}[b]{0.45\textwidth}
  \includegraphics[width=\textwidth]{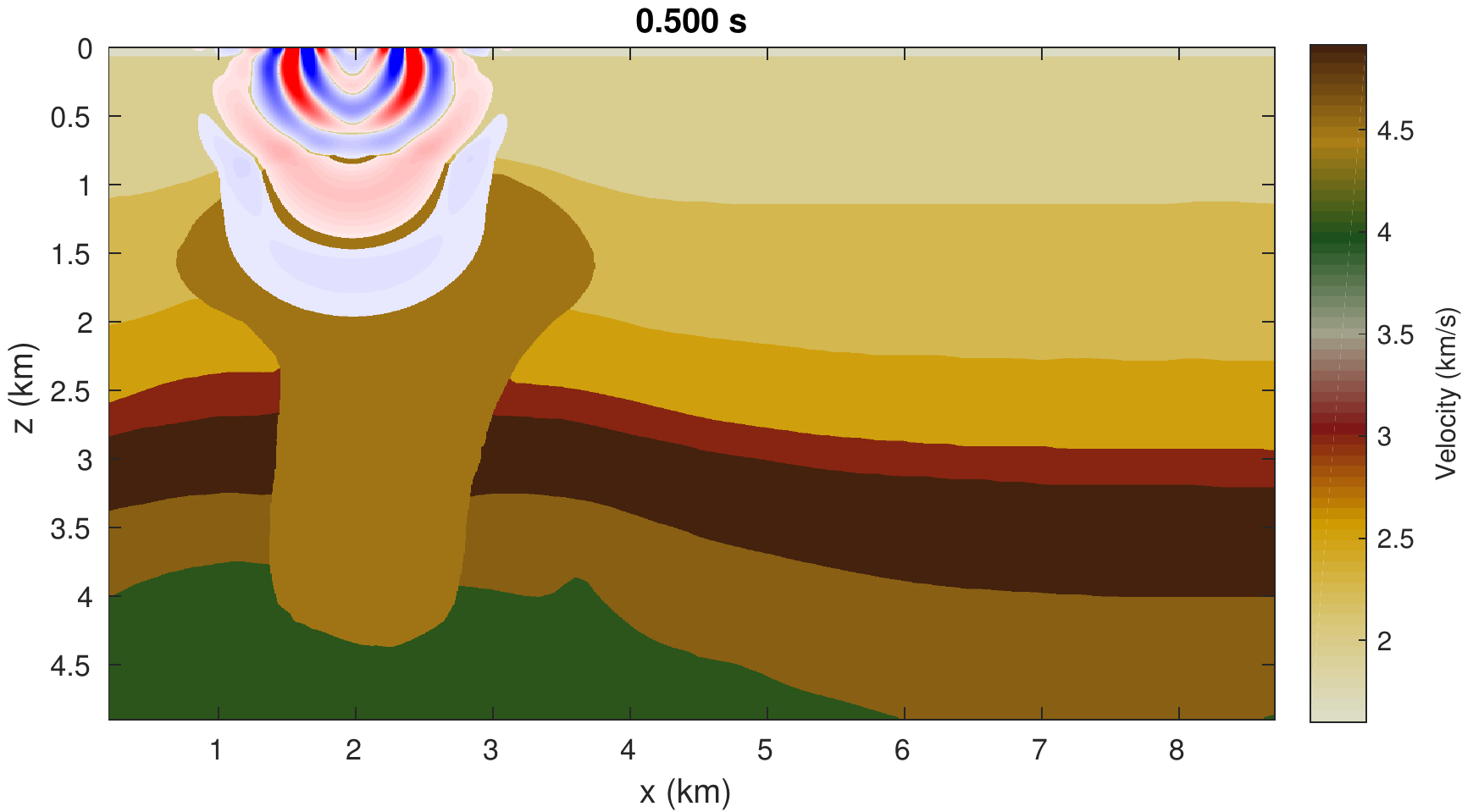}
  \caption{}
\end{subfigure} \,\,
\begin{subfigure}[b]{0.45\textwidth}
  \includegraphics[width=\textwidth]{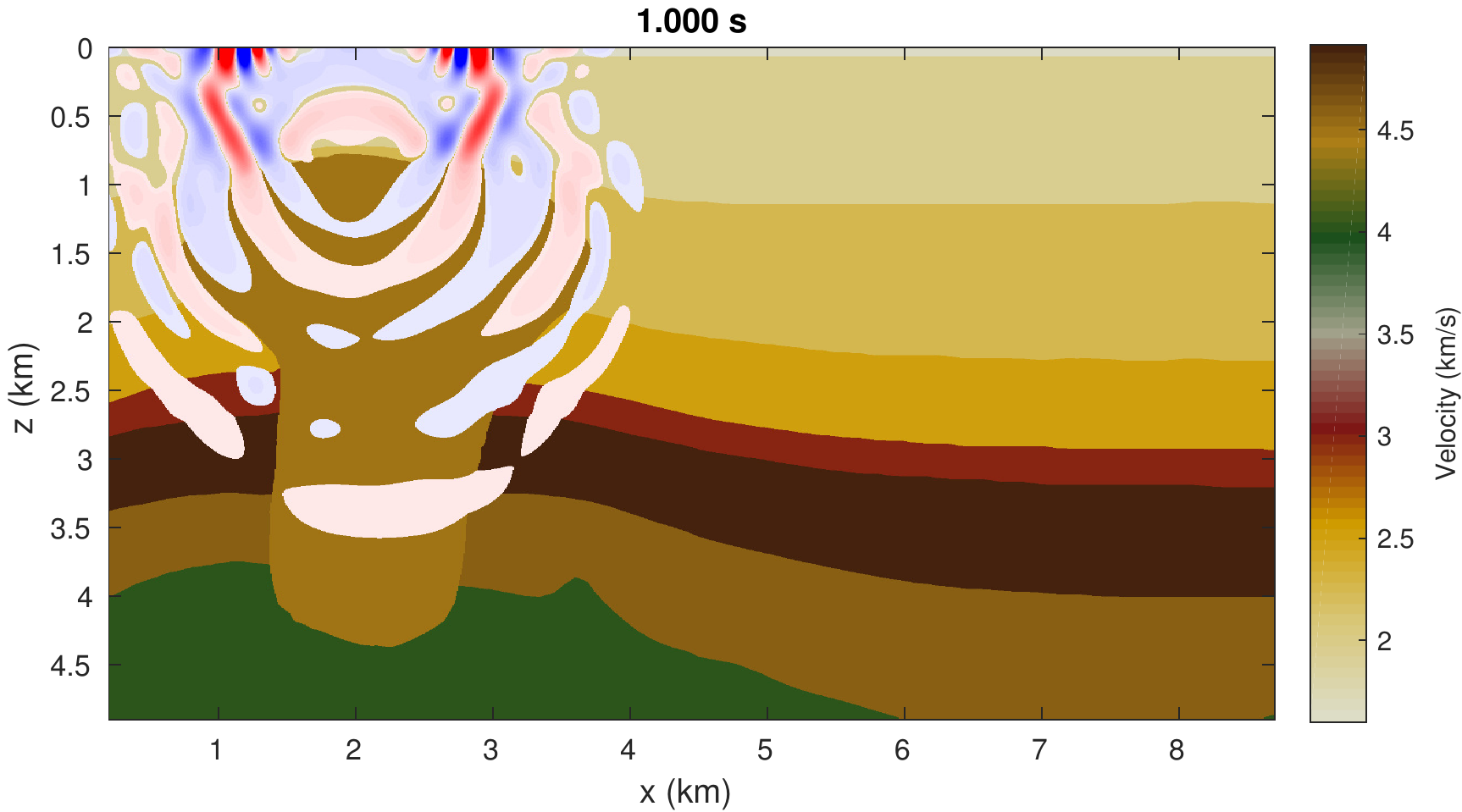}
  \caption{}
\end{subfigure} \\
\begin{subfigure}[b]{0.45\textwidth}
  \includegraphics[width=\textwidth]{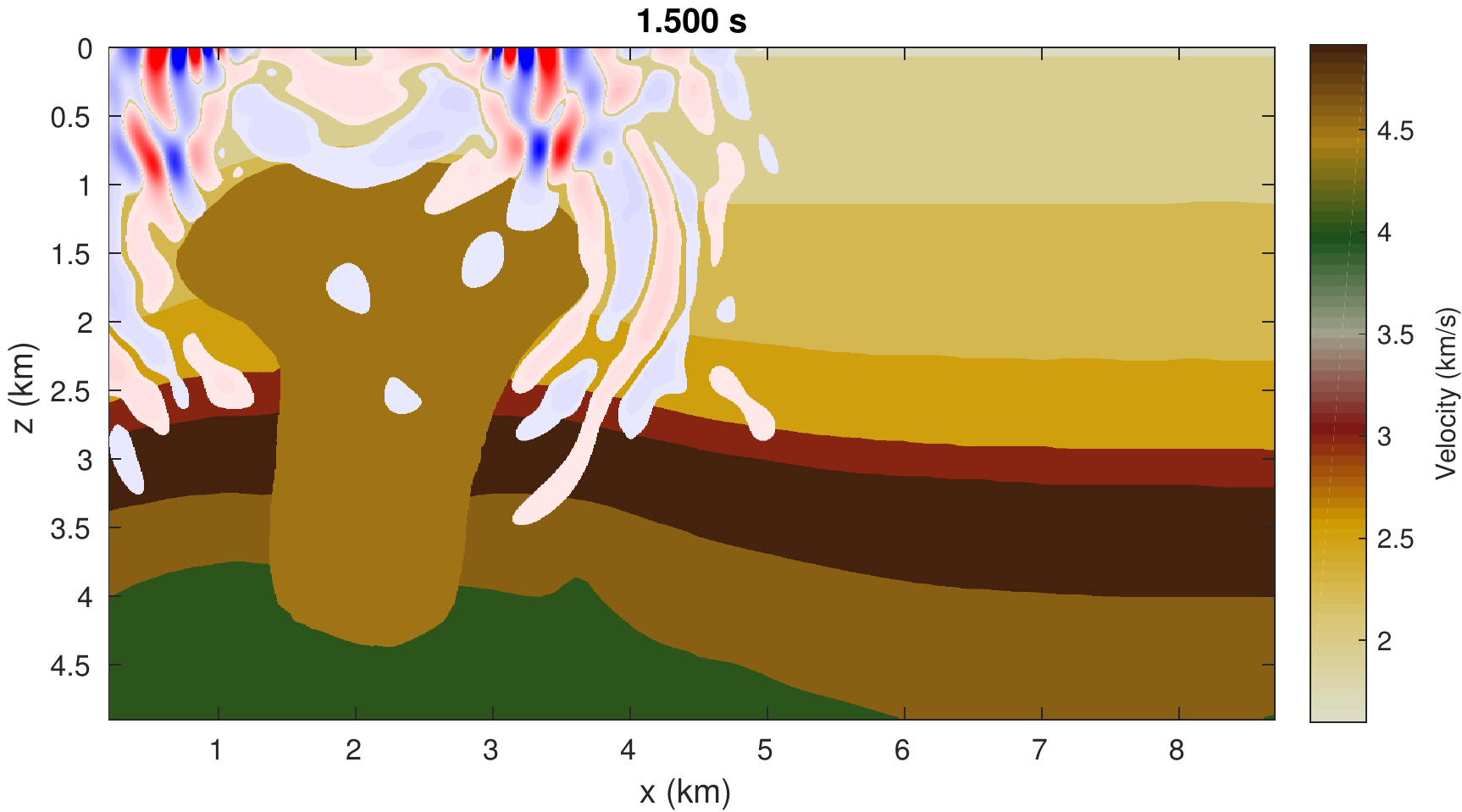}
  \caption{}
\end{subfigure} \,\,
\begin{subfigure}[b]{0.45\textwidth}
  \includegraphics[width=\textwidth]{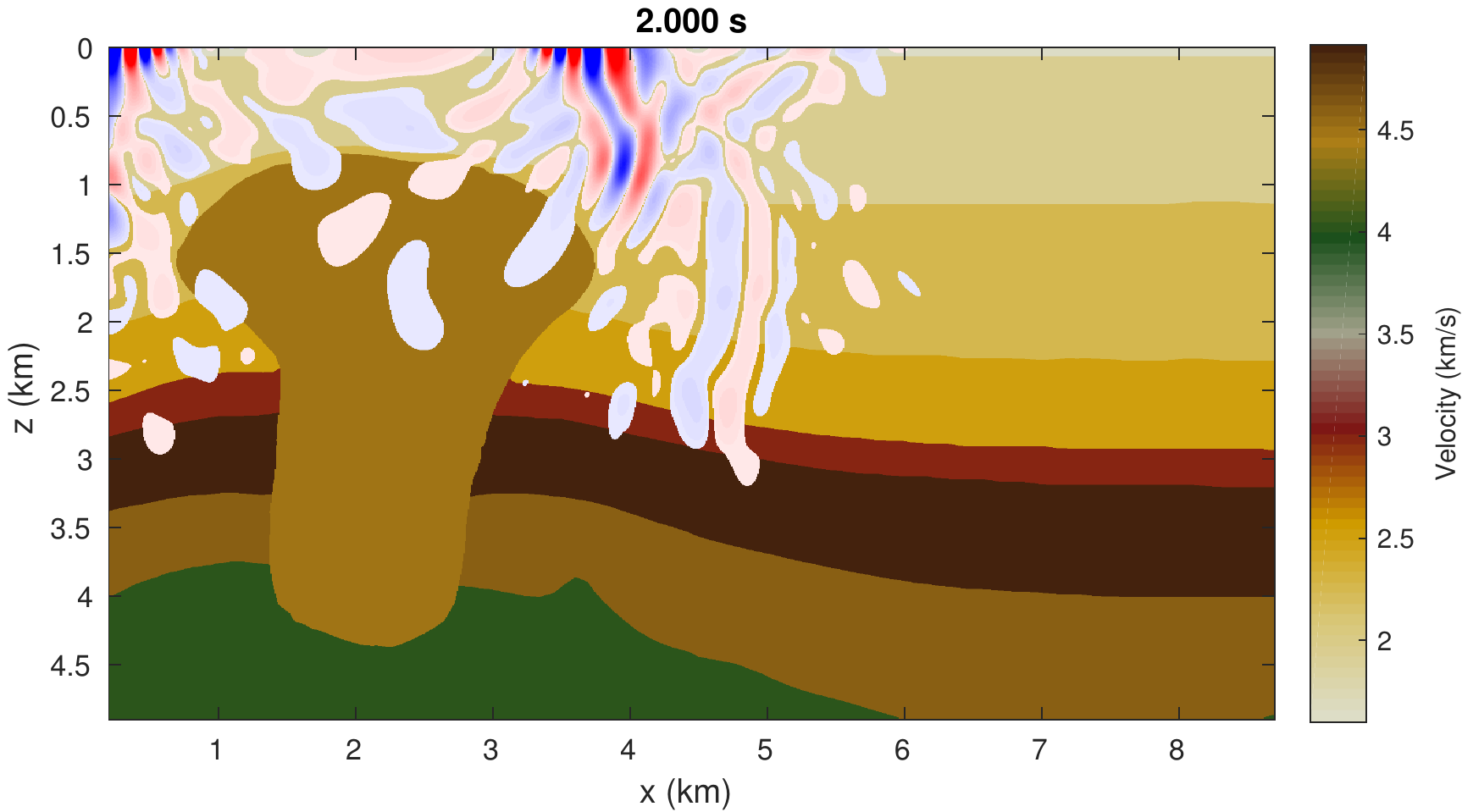}
  \caption{}
\end{subfigure}
\caption{Vertical cross section at $y=\,$2.2$\,$km of the P-velocity with a superimposed snapshot of the vertical displacement of the 65-node degree-4 method after 0.5$\,$s (a), 1.0$\,$s (b), 1.5$\,$s (c) and 2$\,$s (d).}
\label{fig:saltSnaps}
\end{figure}

\begin{figure}[h]
\centering
\begin{subfigure}[h]{0.45\textwidth}
  \includegraphics[width=\textwidth]{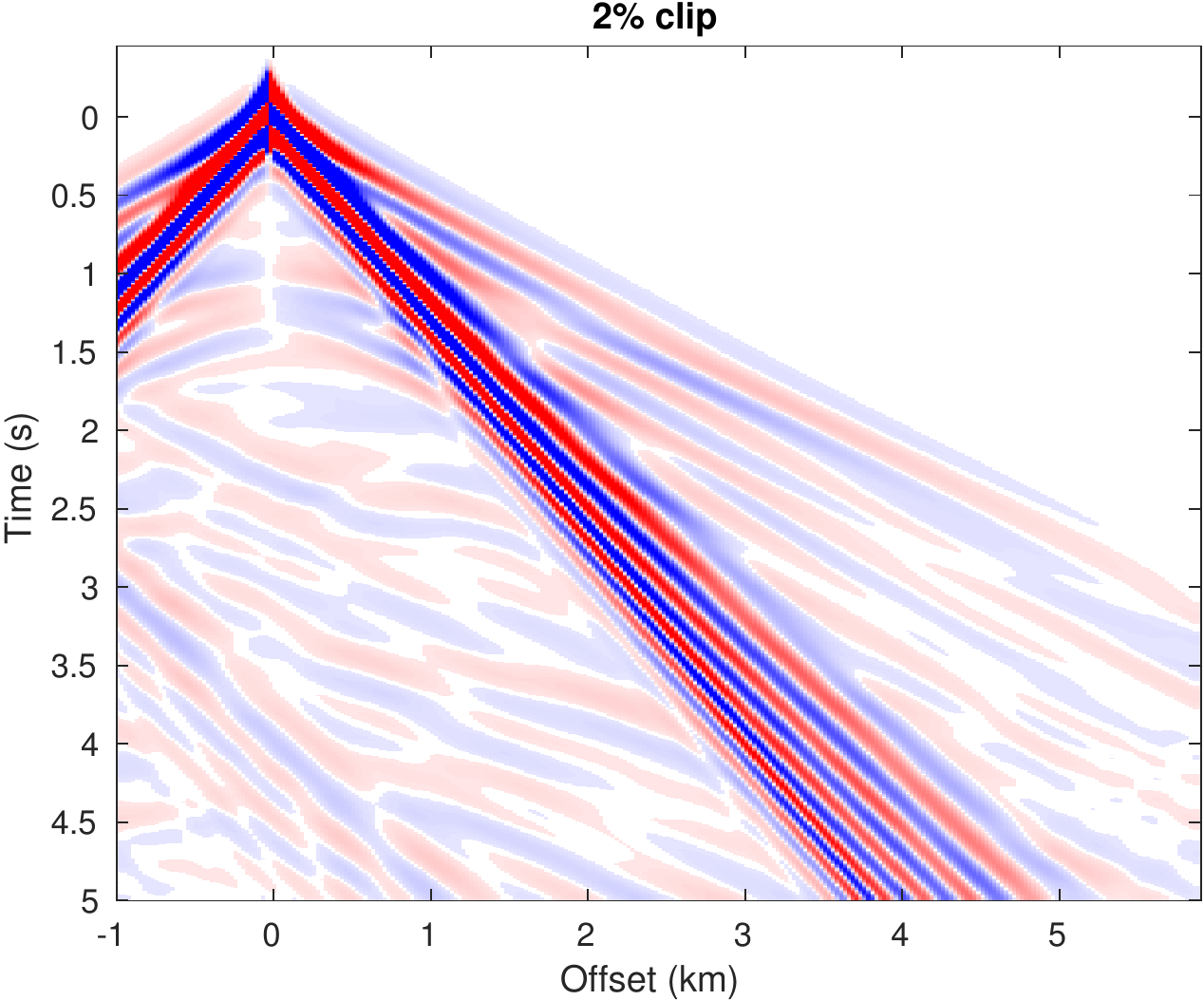}
\end{subfigure} \,\,
\begin{subfigure}[h]{0.45\textwidth}
  \includegraphics[width=\textwidth]{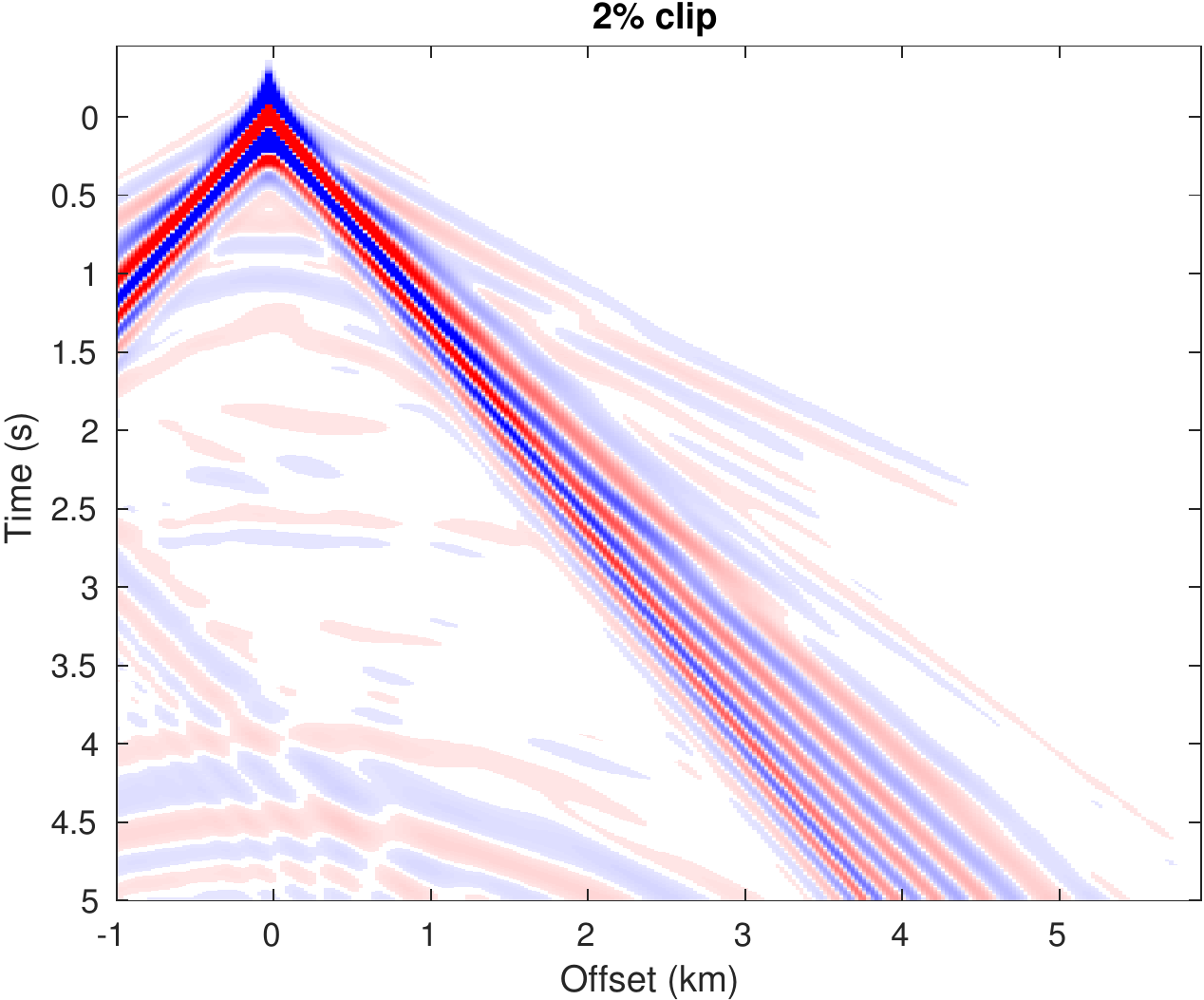}
\end{subfigure}
\caption{Seismogram of the displacement in the $x$- (left) and $z$-direction (right) for the 65-node degree-4 method. }
\label{fig:seismogram}
\end{figure}

\begin{table}[h]
\caption{Estimated relative RMS error of the displacement in the x and z-direction and measured wall clock time. The error is estimated by computing the difference with the ML4n65T4 data. ML[$p$]n[$n$]T[$K$] refers to an element of degree $p$, with $n$ nodes per element, combined with a time stepping scheme of order $K$. There are two versions of the ML3n50 element. ML1T2 is also tested on two refined meshes. New mass-lumped methods are marked in bold.}
\label{tab:errSeis}
\begin{center}
\begin{tabular}{l ||l |l |r}
Method			& RMS-$x$ & RMS-$z$ & time (s)\\ \hline\hline
ML1T2			& $0.50$ & $0.42$ & $ 324$ \\ 
				& $0.36$ & $0.39$ & $ 4230$ \\ 
				& $0.12$ & $0.11$ & $ 47962$ \\ \hline
\textbf{ML2n15T2}	& $0.044$ & $0.048$ & $7358$ \\
ML2n23T2		& $0.058$ & $0.064$ & $28950$ \\ \hline
\textbf{ML3n32T4}	& $0.0014$ & $0.0015$ & $41946$ \\ 
ML3n50aT4		& $0.0016$ & $0.0017$ & $312149$ \\ 
ML3n50bT4		& $0.0016$ & $0.0017$ & $177312$ \\ \hline
\textbf{ML4n60T4}	& $0.000017$ & $0.000019$ & $613945$ \\ 
\textbf{ML4n61T4}	& $0.000013$ & $0.000014$ & $336362$ \\
\textbf{ML4n65T4}	& $0$ & $0$ & $275933$ 
\end{tabular}
\end{center}
\end{table}

Simulations were carried out with the same implementation and in the same environment as for the homogeneous test case. The RMS errors are estimated by taking the traces for the 65-node degree-4 method as the `exact' solution. For the RMS error we use the data of receivers between $x=\,$2.1 and $x=\,$4$\,$km in the time interval $[0,1.8]\,$s. We selected this subset to exclude errors caused by the absorbing boundary layers. To compute the relative RMS error, we divide by the RMS of the data. 

An overview of the RMS errors and the wall-clock time is given in Table \ref{tab:errSeis}. The differences in the traces of the different degree-4 methods is of order $10^{-5}$ and is much smaller then the estimated errors of the lower-degree elements. This indicates that the RMS errors of the degree-4 methods are of order $10^{-5}$ and supports the idea that the degree-4 method can be used to estimate the accuracy of the lower-degree methods in this case. The Table illustrates again that the new degree-2 and degree-3 mass-lumped tetrahedral elements are more efficient than the current ones. The differences in computation time between the degree-4 methods is mainly due to the difference in number of time steps. The ML4n65 allows for a larger time step size than the other variants, which makes it slightly more efficient.


\section{Conclusion}
\label{sec:conclusion}
We developed a less restrictive accuracy condition for the construction of continuous mass-lumped elements, which enabled us to construct several new tetrahedral elements. The new degree-2 and degree-3 tetrahedral elements require 15 and 32 nodes, while the current versions require 23 and 50 nodes per element, respectively. These new elements require less degrees of freedom and allow larger time steps than the current versions. We also developed degree-4 tetrahedral elements with 60, 61, and 65 nodes per element. Mass-lumped tetrahedral elements of this degree had not been found yet. 

A dispersion analysis and numerical examples confirm that the new mass-lumped methods maintain an optimal order of accuracy and show that the new elements are significantly more efficient than the existing ones. In particular, the new degree-2 method is shown to be up to one order of magnitude faster than the current method, while the new degree-3 tetrahedral element results in a speed-up of up to a factor 2 for the same accuracy. The new degree-4 elements outperform the lower-degree elements for an accuracy below $10^{-3}$. Among these degree-4 elements, the one with 65 nodes is  the most efficient, which is mainly due to a larger allowed time step size. The dispersion analysis also shows that the new degree-2 and degree-3 mass-lumped methods require significantly less degrees of freedom and number of time steps than the symmetric interior penalty discontinuous Galerkin methods of the same degree.

We have only considered tetrahedral elements in this paper, but the accuracy condition might also lead to more efficient triangular or higher-dimensional simplicial elements. Furthermore, although we focused only on linear wave propagation problems, mass lumping is useful for solving any type of evolution problem that requires explicit time-stepping.

\bibliographystyle{abbrv}
\bibliography{MLFEM}

\appendix
\section{Nodal Basis Functions}

\begin{lem}
\label{lem:Srep}
Let $e$ be a $d$-simplex in $\mathbb{R}^d$, with $d\geq 0$, and let $s: \mathbb{R}^d\rightarrow \mathbb{R}^d$ be an affine mapping that maps $e$ onto itself. Then $s$ can be represented by a permutation of the barycentric coordinates of $e$. In particular, there exists a permutation function $P:\mathbb{R}^{d+1}\rightarrow\mathbb{R}^{d+1}$ such that 
\begin{align}
\label{eq:Srep}
s(\vx)^* &= P(\vx^*) &&\text{for all }\vx\in\mathbb{R}^d,
\end{align}
where $\vx^*, s(\vx)^*\in\mathbb{R}^{d+1}$ denote the barycentric coordinates of $\vx$ and $s(\vx)$, respectively.
\end{lem}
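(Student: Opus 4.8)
The plan is to exploit the two defining properties of $s$: it is affine, and it maps the full-dimensional simplex $e$ onto itself. Write the vertices of $e$ as $v_0,\dots,v_d$, so that the barycentric coordinates $\vx^*=(\lambda_0,\dots,\lambda_d)$ of a point $\vx$ are characterized by $\vx=\sum_{i=0}^d\lambda_i v_i$ together with $\sum_{i=0}^d\lambda_i=1$. The whole argument reduces to showing that $s$ merely permutes the vertices $v_0,\dots,v_d$; once that is established, the permutation of barycentric coordinates falls out from the elementary fact that affine maps respect affine combinations.

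First I would establish that $s$ permutes the vertices. Since $s$ is affine, the image of the simplex $e=\mathrm{conv}(v_0,\dots,v_d)$ is the polytope $\mathrm{conv}(s(v_0),\dots,s(v_d))$, and by hypothesis this equals $e$. Every vertex of $e$ is an extreme point of $e=s(e)$, and an extreme point of the convex hull of finitely many points must be one of those points; hence each of the $d+1$ vertices of $e$ occurs among the $d+1$ points $s(v_0),\dots,s(v_d)$. A counting argument then forces $\{s(v_0),\dots,s(v_d)\}$ to equal the vertex set with all images distinct, so there is a permutation $\pi$ of $\{0,\dots,d\}$ with $s(v_i)=v_{\pi(i)}$ for all $i$.

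Next I would transfer this to barycentric coordinates. For any $\vx$ with $\vx=\sum_i\lambda_i v_i$ and $\sum_i\lambda_i=1$, affinity of $s$ together with $\sum_i\lambda_i=1$ gives
\[
s(\vx)=s\Bigl(\sum_{i}\lambda_i v_i\Bigr)=\sum_{i}\lambda_i\,s(v_i)=\sum_{i}\lambda_i v_{\pi(i)}=\sum_{j}\lambda_{\pi^{-1}(j)}v_j .
\]
Since the coefficients $\lambda_{\pi^{-1}(j)}$ still sum to one, they are precisely the barycentric coordinates of $s(\vx)$, i.e.\ $s(\vx)^*=(\lambda_{\pi^{-1}(0)},\dots,\lambda_{\pi^{-1}(d)})$. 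Defining $P:\mathbb{R}^{d+1}\to\mathbb{R}^{d+1}$ to be the coordinate permutation $P(y_0,\dots,y_d):=(y_{\pi^{-1}(0)},\dots,y_{\pi^{-1}(d)})$ then yields (\ref{eq:Srep}). The boundary case $d=0$ is trivial, as $e$ is a single point and $s$ the identity.

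The main obstacle I anticipate is the vertex-permutation step: making rigorous that $s(e)=e$ forces $s$ to send vertices to vertices bijectively. The cleanest route is the extreme-point characterization above, but one must justify that the extreme points of $\mathrm{conv}(s(v_0),\dots,s(v_d))$ lie among the $s(v_i)$ and that each vertex of $e$, being extreme, cannot be a nontrivial convex combination of the others. An equivalent and perhaps more elementary alternative is to observe that $s(e)=e$ with $e$ full-dimensional forces the linear part of $s$ to be invertible, so that $s$ is an affine bijection of $\mathbb{R}^d$; affine bijections preserve the facial structure of a polytope and hence map the vertex set of $e$ bijectively onto itself. Either way, the remaining computation is the short affine-combination identity displayed above.
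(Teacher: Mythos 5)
Your proof is correct and follows essentially the same route as the paper's: identify the permutation $\pi$ of the vertices of $e$ induced by $s$, and let affinity carry it over to the barycentric coordinates. The only notable difference is that you explicitly justify the existence of $\pi$ via the extreme-point argument (the paper simply asserts that $s$ permutes the vertices), and you verify (\ref{eq:Srep}) by directly expanding the affine combination rather than by the paper's observation that two affine maps into $\mathbb{R}^{d+1}$ which agree on the $d+1$ vertices of a full-dimensional simplex agree everywhere; both variations are routine and sound.
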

\begin{proof}
Note that both $\vx\rightarrow s(\vx)^*$ and $\vx\rightarrow P(\vx^*)$ are affine mappings from $\mathbb{R}^d$ to $\mathbb{R}^{d+1}$. It therefore suffices to show that (\ref{eq:Srep}) holds for all vertices of $e$. To do this, let $\vct{v}_i\in\mathbb{R}^d$, for $i=1,\dots,d+1$, denote the vertices of $e$, and let $p$ be the permutation of $(1,\dots,d+1)$ such that $s(\vct{v}_i)=\vct{v}_{p(i)}$. If we define P such that $P(\vct{e}_i)=\vct{e}_{p(i)}$, with $\vct{e}_i\in\mathbb{R}^{d+1}$ the unit vector in direction $i$, then
\begin{align*}
s(\vct{v}_i)^* &= \vct{v}_{p(i)}^* \\
&= \vct{e}_{p(i)} \\
&= P(\vct{e}_{i}) \\
&= P(\vct{v}_i^*)
\end{align*}
for all $i=1,\dots,d+1$. 
\end{proof}

\begin{lem}
\label{lem:nodalBF}
Let $\mathcal{Q}_h$ be defined as in Section \ref{sec:nodalBF}, and let $\vx\in \overline e$, for some $\vx\in\mathcal{Q}_h, e\in\mathcal{T}_h$. If (\ref{eq:confC2a}) is satisfied, then $\phi_e^{-1}(\vx)\in\tilde{\mathcal{Q}}$.
\end{lem}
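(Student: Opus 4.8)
The plan is to trace the node $\vx$ back to whatever element produced it, and then transport it into $e$ by means of a symmetry of the reference element. First I would use the definition of $\mathcal{Q}_h$ as the union of the element node sets $\mathcal{Q}_{e'}=\{\phi_{e'}(\tilde\vy)\}_{\tilde\vy\in\tilde{\mathcal{Q}}}$: since $\vx\in\mathcal{Q}_h$, there exist an element $e'\in\mathcal{T}_h$ and a reference node $\tilde\vy\in\tilde{\mathcal{Q}}$ with $\vx=\phi_{e'}(\tilde\vy)$. If $e'=e$, then $\phi_e^{-1}(\vx)=\tilde\vy\in\tilde{\mathcal{Q}}$ and there is nothing to prove, so the only case that requires work is $e'\neq e$.

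In that case $\vx$ lies in $\sigma:=\overline e\cap\overline{e'}$, which, since $\mathcal{T}_h$ is a conforming tessellation, is a common sub-simplex of $e$ and $e'$ (a shared face, edge, or vertex). Pulling $\sigma$ back to the reference element along each mapping yields two sub-simplices $\tilde\sigma_{e'}:=\phi_{e'}^{-1}(\sigma)$ and $\tilde\sigma_e:=\phi_e^{-1}(\sigma)$ of $\tilde e$ of the same dimension, and from $\vx\in\sigma$ I get $\tilde\vy=\phi_{e'}^{-1}(\vx)\in\tilde\sigma_{e'}$. The affine isomorphism $g:=\phi_e^{-1}\circ\phi_{e'}$ then maps $\tilde\sigma_{e'}$ bijectively onto $\tilde\sigma_e$ and, in particular, carries $\tilde\vy$ to $\phi_e^{-1}(\vx)$.

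The crux of the argument, and the step I expect to be the main obstacle, is to extend $g|_{\tilde\sigma_{e'}}$ to a global symmetry $s\in\mathcal{S}$. Because $g$ is an affine bijection between simplices, it maps the vertices of $\tilde\sigma_{e'}$ to the vertices of $\tilde\sigma_e$, hence induces a bijection between the corresponding subsets of the four reference vertices. I would extend this bijection to a permutation of all four reference vertices: the vertices lying off $\tilde\sigma_{e'}$ can be sent, in any order, to those lying off $\tilde\sigma_e$, which is possible because the two complementary vertex sets both have size $3-\dim\sigma$. Invoking Lemma \ref{lem:Srep}, this permutation of barycentric coordinates is realised by an affine map $s\in\mathcal{S}$ of $\tilde e$ onto itself. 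By construction $s$ and $g$ agree on every vertex of $\tilde\sigma_{e'}$; being affine, they therefore agree on the affine hull of $\tilde\sigma_{e'}$, and in particular $s(\tilde\vy)=g(\tilde\vy)=\phi_e^{-1}(\vx)$.

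Finally I would apply the hypothesis (\ref{eq:confC2a}). Since $\tilde\vy\in\tilde{\mathcal{Q}}$ and $s\in\mathcal{S}$, the condition $s(\tilde{\mathcal{Q}})=\tilde{\mathcal{Q}}$ gives $s(\tilde\vy)\in\tilde{\mathcal{Q}}$, whence $\phi_e^{-1}(\vx)=s(\tilde\vy)\in\tilde{\mathcal{Q}}$, as claimed. The only place where genuine geometry rather than bookkeeping enters is the extension step, which rests on the conforming-mesh property that two distinct elements meet in a common sub-simplex, together with the elementary fact that an affine bijection between simplices permutes their vertices.
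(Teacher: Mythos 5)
Your proof is correct and follows essentially the same route as the paper's: trace $\vx$ back to an element $e^*$ with $\phi_{e^*}^{-1}(\vx)\in\tilde{\mathcal{Q}}$, build $s\in\mathcal{S}$ agreeing with $\phi_e^{-1}\circ\phi_{e^*}$ on the pulled-back intersection, and conclude via (\ref{eq:confC2a}). The only difference is that you spell out the construction of $s$ (vertex bijection on the common sub-simplex extended to a permutation of all four reference vertices), a step the paper simply asserts.
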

\begin{proof}
By definition of $\mathcal{Q}_h$, there exists an element $e^*\in\mathcal{T}_h$ and node $\tilde\vx^*\in\tilde{\mathcal Q}$ such that $\vx=\phi_{e^*}(\tilde\vx^*)$, so $\phi_{e^*}^{-1}(\vx)=\tilde\vx^*\in\tilde{\mathcal Q}$. Now construct a reference-to-reference element mapping $s\in\mathcal{S}$ such that $s|_{\phi_{e^*}^{-1}(\overline e^*\cap \overline e)} = \phi_{e}^{-1}\circ\phi_{e^*}|_{\phi_{e^*}^{-1}(\overline e^*\cap \overline e)}$. Then, using (\ref{eq:confC2a}), we can obtain $\phi_{e}^{-1}(\vx) = s\circ\phi_{e^*}^{-1}(\vx) = s(\vx^*) \in \tilde{\mathcal Q}$.
\end{proof}

\begin{thm}
\label{thm:nodalBF}
Let $\mathcal{Q}_h$ be defined as in Section \ref{sec:nodalBF}, and let $\vx\in\mathcal{Q}_h$. If the conditions in (\ref{eq:confC1}) and (\ref{eq:confC2}) are satisfied, then the basis function $w_{\vx}$, given in (\ref{eq:nodalBFD}), is well-defined and continuous and satisfies
\begin{align}
w_{\vx}(\vy)&=\delta_{\vx\vy} &&\text{for all }\vy\in\mathcal{Q}_h,
\label{eq:nodalBFP}
\end{align}
where $\delta_{\vx\vy}$ denotes the Kronecker delta function.
\end{thm}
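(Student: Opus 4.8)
The plan is to establish the three assertions in turn: that the piecewise formula (\ref{eq:nodalBFD}) makes sense on each element, that the resulting function glues into a continuous function on $\overline\Omega$, and finally that it satisfies the nodal property (\ref{eq:nodalBFP}). For the first point, note that when $e\in\mathcal{T}_\vx$ we have $\vx\in\overline e$, so Lemma \ref{lem:nodalBF} (which uses (\ref{eq:confC2a})) guarantees $\phi_e^{-1}(\vx)\in\tilde{\mathcal{Q}}$; hence $\tilde w_{\phi_e^{-1}(\vx)}$ is an existing reference basis function and $w_\vx|_e$ is well-defined. Since $w_\vx$ restricted to each closed element is the pullback of a polynomial, it is continuous there, so by the standard criterion for conforming simplicial meshes it suffices, for global continuity, to check that $w_\vx|_e$ and $w_\vx|_{e'}$ agree on every shared facet $f=\overline e\cap\overline{e'}$.

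First I would dispose of the case $\vx\notin f$. If $\vx\notin\overline e$ then $e\notin\mathcal{T}_\vx$ and $w_\vx|_e\equiv 0$; if instead $\vx\in\overline e$ but $\vx\notin f$, then writing $\tilde f_e:=\phi_e^{-1}(f)\in\tilde{\mathcal{F}}$ we have $\phi_e^{-1}(\vx)\in\tilde{\mathcal{Q}}\setminus\tilde f_e$, so condition (\ref{eq:confC1}) forces $w_\vx|_e|_f=0$. The same holds for $e'$, so both restrictions vanish on $f$ and agree. The substantive case is $\vx\in f$, where both $e,e'\in\mathcal{T}_\vx$ and neither restriction is identically zero; this is where I expect the main difficulty.

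In this case I would mimic the proof of Lemma \ref{lem:nodalBF}: set $\tilde f_{e'}:=\phi_{e'}^{-1}(f)$ and construct $s\in\mathcal{S}$ (a permutation of barycentric coordinates, via Lemma \ref{lem:Srep}) with $s|_{\tilde f_{e'}}=\phi_e^{-1}\circ\phi_{e'}|_{\tilde f_{e'}}$, obtained by extending the vertex correspondence of the shared facet to a full vertex permutation. Writing $\tilde\vx':=\phi_{e'}^{-1}(\vx)\in\tilde f_{e'}$ and, for $\vy\in f$, $\tilde\vy':=\phi_{e'}^{-1}(\vy)\in\tilde f_{e'}$, the defining property of $s$ gives $\phi_e^{-1}(\vx)=s(\tilde\vx')$ and $\phi_e^{-1}(\vy)=s(\tilde\vy')$. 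Then the symmetry condition (\ref{eq:confC2b}) applied with $\tilde\vx'$ yields
\begin{align*}
w_\vx|_e(\vy)=\tilde w_{s(\tilde\vx')}\big(s(\tilde\vy')\big)=\tilde w_{\tilde\vx'}(\tilde\vy')=w_\vx|_{e'}(\vy),
\end{align*}
which is the required agreement. The crux is this gluing computation: correctly producing $s\in\mathcal{S}$ and tracking the compositions so that (\ref{eq:confC2b}) can be invoked.

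Finally, for the nodal property, I would fix $\vy\in\mathcal{Q}_h$ and evaluate on any closed element $e$ containing $\vy$ (by the continuity just established the value is independent of this choice). If $e\notin\mathcal{T}_\vx$ then $w_\vx(\vy)=0$, consistent with $\vx\neq\vy$; the remaining subcase is $\vx,\vy\in\overline e$, where Lemma \ref{lem:nodalBF} gives $\phi_e^{-1}(\vx),\phi_e^{-1}(\vy)\in\tilde{\mathcal{Q}}$, these coincide iff $\vx=\vy$ since $\phi_e$ is a bijection, and the nodal property $\tilde w_{\tilde\vx}(\tilde\vy)=\delta_{\tilde\vx\tilde\vy}$ of the reference basis transfers directly to $w_\vx(\vy)=\delta_{\vx\vy}$.
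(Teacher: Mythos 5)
Your proposal is correct and follows essentially the same route as the paper's proof: well-definedness via Lemma \ref{lem:nodalBF}, continuity by the same three-case analysis on a shared face (with the $\vx\in f$ case handled by constructing $s\in\mathcal{S}$ matching $\phi_e^{-1}\circ\phi_{e'}$ on the reference face and invoking (\ref{eq:confC2b})), and the nodal property by splitting on whether $\vx$ and $\vy$ share an element. The only cosmetic difference is that you carry out the gluing computation pointwise rather than as an identity of trace functions.
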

\begin{proof}
The fact that $w_{\vx}$ is well defined follows immediately from Lemma \ref{lem:nodalBF}.

To prove that $w_{\vx}$ is continuous, let $f=\partial e^-\cap \partial e^+$ be any face adjacent to the elements $e^-,e^+\in\mathcal{Q}_h$. It is sufficient to show that $w_{\vx}|_{\partial e^+\cap f} = w_{\vx}|_{\partial e^-\cap f}$, where $w|_{\partial e\cap f}$ denotes the trace of function $w$ restricted to $e$ on face $f$. Suppose that $\vx\notin \overline e^{-}\cup \overline e^{+}$. Then $w_{\vx}|_{\partial e^-\cap f} = 0 = w_{\vx}|_{\partial e^+\cap f}$.

Now suppose that $\vx\in \overline e^{+}\setminus f$. Then $\vx\notin \overline e^-$ and $\tilde\vx^+\notin\tilde f^+$, where $\tilde{\vx}^+:=\phi_{e^+}^{-1}(\vx)$ and $\tilde f^{+}:=\phi_{e^+}^{-1}(f)$. From (\ref{eq:confC1}) it then follows that $w_{\vx}|_{\partial e^+\cap f} = \tilde w_{\tilde\vx^+}\circ\phi_{e^+}^{-1}|_f = 0 = w_{\vx}|_{\partial e^-\cap f}$.

Finally, suppose that $\vx\in f$. Let $s\in\mathcal{S}$ be a reference-to-reference element mapping such that $s|_{\tilde f^+} = \phi_{e^-}^{-1}\circ\phi_{e^+}|_{\tilde f^+}$. We can then derive
\begin{align*}
w_{\vx}|_{\partial e^+\cap f} &= \tilde w_{\tilde\vx^+}\circ\phi_{e^+}^{-1}|_f \\
&= \tilde w_{s(\tilde\vx^+)}\circ s \circ\phi_{e^+}^{-1}|_f \\
&= \tilde w_{\phi_{e^-}^{-1}(\vx)}\circ\phi_{e^-}^{-1}|_f \\
&= w_{\vx}|_{\partial e^-\cap f},
\end{align*}
where the second line follows from (\ref{eq:confC2b}).

Since $w_{\vx}$ is continuous, $w_{\vx}(\vy)$ is well-defined for any $\vy\in\Omega$. To prove property (\ref{eq:nodalBFP}), suppose there is an element $e\in\mathcal{T}_h$ such that $\vx,\vy\in \overline e$. Define $\tilde\vx:=\phi_e^{-1}(\vx)$ and $\tilde\vy:=\phi_e^{-1}(\vy)$. Then $w_{\vx}(\vy)=\tilde w_{\tilde\vx}(\tilde\vy) = \delta_{\tilde\vx\tilde\vy}=\delta_{\vx\vy}$.

Now suppose that there is no element $e$ such that $\vx,\vy\in \overline e$. Then $\vx\neq\vy$ and there exists an element $e$ such that $\vy\in \overline e$ and $\vx\notin \overline e$. By definition of $w_{\vx}$ it then follows that $w_{\vx}(\vy)=0=\delta_{\vx\vy}$.
\end{proof}

\section{Variants of the Degree-Four Mass-Lumped Tetrahedral Element}
\label{sec:varML4}

\begin{table}[h]
\caption{Degree-$4$ mass-lumped tetrahedral element with $60$ nodes.}
\label{tab:ML4n60}
\begin{center}
{\tabulinesep=0.5mm
\begin{tabu}{c r l l}
Nodes					& $n$ 	& $\omega$ 	& parameters  \\ \hline
$\{(0,0,0)\}	$				& $4$	& $0.00009319146955767176$		& - \\
$\{(a,0,0)\}$				& $12$	& $0.0004829332376473431$		& $0.1614865833496676$ \\
$\{(\frac12,0,0)\}$			& $6$	& $0.0002005503792135920$		& - \\
$\{(b_1,b_1,0)\}$			& $12$	& $0.002003104085841525$		& $0.1490219288469598$ \\
$\{(b_2,b_2,0)\}$			& $12$	& $0.001126849366800016$		& $0.3944591972171783$ \\
$\{(c_1,c_1,c_1)\}$			& $4$	& $0.009159244489996298$		& $0.1302058846372564$ \\ 
$\{(d,d,\frac12-d)\}$			& $6$	& $0.006725322654059780$		& $0.06386116838612691$ \\ 
$\{(c_2,c_2,c_2)\}$			& $4$	& $0.01118676108633598$		& $0.3012179234079087$ \\ \hline
 \multicolumn{4}{c}{$U=\mathcal{P}_4 \oplus \mathcal{B}_f\mathcal{P}_2 \oplus \mathcal{B}_e(\mathcal{P}_2\oplus\mathcal{B}_f)$}  \\ 
 \multicolumn{4}{c}{$=\{x_1,x_1^2x_2,x_1^2x_2^2,\beta_fx_1, \beta_fx_1x_2, \beta_ex_1,\beta_ex_1x_2,\beta_e\beta_f \} $}  \\ \hline
 \multicolumn{4}{c}{$U\otimes\mathcal{P}_2=\{x_1, x_1^2x_2, x_1^3x_2^2, x_1^3x_2^3, \beta_fx_1, \beta_fx_1^2x_2, \beta_fx_1^2x_2^2,\beta_f^2x_1, \dots$}  \\  
  \multicolumn{4}{c}{$\dots, \beta_ex_1,\beta_ex_1^2x_2,\beta_ex_1^2x_2^2, \beta_e\beta_fx_1,\beta_e\beta_fx_1x_2, \beta_e^2x_1 \}$}  \\  \hline
\end{tabu}}
\end{center}
\end{table}

\begin{table}[h]
\caption{Degree-$4$ mass-lumped tetrahedral element with $61$ nodes.}
\label{tab:ML4n61}
\begin{center}
{\tabulinesep=0.5mm
\begin{tabu}{c r l l}
Nodes					& $n$ 	& $\omega$ 	& parameters  \\ \hline
$\{(0,0,0)\}	$				& $4$	& $0.0001593069370906064$		& - \\
$\{(a,0,0)\}$				& $12$	& $0.0004461325181676239$		& $0.2001628104707848$ \\
$\{(\frac12,0,0)\}$			& $6$	& $0.0003715829945705960$		& - \\
$\{(b_1,b_1,0)\}$			& $12$	& $0.001884294964657102$		& $0.1397350972238366$ \\
$\{(b_2,b_2,0)\}$			& $12$	& $0.001545425606069384$		& $0.4319436235177682$ \\
$\{(c_1,c_1,c_1)\}$			& $4$	& $0.008841425190569096$		& $0.1282209316290979$ \\ 
$\{(d,d,\frac12-d)\}$			& $6$	& $0.006891012924401557$		& $0.08742182088664353$ \\ 
$\{(c_2,c_2,c_2)\}$			& $4$	& $0.007499563520517103$		& $0.3124061452070811$ \\ 
$\{(\frac14,\frac14,\frac14)\}$	& $1$	& $0.01057967149339721$		& - \\ \hline
 \multicolumn{4}{c}{$U=\mathcal{P}_4 \oplus \mathcal{B}_f\mathcal{P}_2 \oplus \mathcal{B}_e(\mathcal{P}_2\oplus\mathcal{B}_f\oplus\mathcal{B}_e)$}  \\ 
 \multicolumn{4}{c}{$=\{x_1,x_1^2x_2,x_1^2x_2^2,\beta_fx_1, \beta_fx_1x_2, \beta_ex_1,\beta_ex_1x_2,\beta_e\beta_f,\beta_e^2\} $}  \\ \hline
 \multicolumn{4}{c}{$U\otimes\mathcal{P}_2=\{x_1, x_1^2x_2, x_1^3x_2^2, x_1^3x_2^3, \beta_fx_1, \beta_fx_1^2x_2, \beta_fx_1^2x_2^2,\beta_f^2x_1, \dots$}  \\  
  \multicolumn{4}{c}{$\dots, \beta_ex_1,\beta_ex_1^2x_2,\beta_ex_1^2x_2^2, \beta_e\beta_fx_1,\beta_e\beta_fx_1x_2, \beta_e^2x_1,\beta_e^2x_1x_2\}$}  \\  \hline
\end{tabu}}
\end{center}
\end{table}

\end{document}